		\def\cprime{$'$}
	\newcommand{\ftn}[3]{ #1 : #2 \rightarrow #3 }
	\newcommand{\setof}[2]{\ensuremath{\left\{ #1 \: : \: #2 \right\}}}
	\newcommand{\ilat}{\operatorname{Lat^\mathrm{alg}}}
	\newcommand{\ilatcl}{\operatorname{Lat^\mathrm{top}}}
	\newcommand{\ktop}{K^{\mathrm{top}}}
	\newcommand{\kalg}{K^{\mathrm{alg}}}
	\newcommand{\okalg}{K^{\mathrm{alg},+}}
	\newcommand{\iktop}{K^{\mathrm{top}}_{\mathrm{ideal}}}
	\newcommand{\ikalg}{K^{\mathrm{alg}}_{\mathrm{ideal}}}
	\newcommand{\oiktop}{K^{\mathrm{top},+}_{\mathrm{ideal}}}
	\newcommand{\oikalg}{K^{\mathrm{alg},+}_{\mathrm{ideal}}}
	\newcommand{\diag}{\ensuremath{\operatorname{diag}}}
	\newcommand{\Z}{\mathbb{Z}}
	\newcommand{\C}{\mathbb{C}}
	\newcommand{\Q}{\mathbb{Q}}
	\newcommand{\N}{\mathbb{N}}
	\newcommand{\K}{\mathbb{K}}
	\theoremstyle{plain}
	\newtheorem{thm}{Theorem}[section]
	\newtheorem{lemma}[thm]{Lemma}
	\newtheorem{theor}[thm]{Theorem}
	\newtheorem{propo}[thm]{Proposition}
	\newtheorem{corol}[thm]{Corollary}
	\theoremstyle{definition}
	\newtheorem{defin}[thm]{Definition}
	\newtheorem{remar}[thm]{Remark}
	\newtheorem{notat}[thm]{Notation}
	\numberwithin{equation}{section}
	\numberwithin{figure}{section}
\numberwithin{equation}{section}
\begin{document}
\title[Ideal-related $K$-theory]{Ideal-related $K$-theory for Leavitt path algebras and graph $C^*$-algebras}

	\author{Efren Ruiz}
        \address{Department of Mathematics\\University of Hawaii,
Hilo\\200 W. Kawili St.\\
Hilo, HI\\
96720-4091 USA}
        \email{ruize@hawaii.edu}
        
        \author{Mark Tomforde}
        \address{Department of Mathematics\\University of Houston\\
Houston, TX\\
77204- 3008, USA}
        \email{tomforde@math.uh.edu}

\thanks{This work was partially supported by a grant from the Simons Foundation (\#210035 to Mark Tomforde)}

\date{\today}

	\keywords{graph $C^*$-algebras, Leavitt path algebras, graph algebras, classification, $K$-theory}
	\subjclass[2000]{Primary: 46L35, 16D70}

\begin{abstract}
We introduce a notion of ideal-related $K$-theory for rings, and use it to prove that if two complex Leavitt path algebras $L_\C(E)$ and $L_\C(F)$ are Morita equivalent (respectively, isomorphic), then the ideal-related $K$-theories (respectively, the unital ideal-related $K$-theories) of the corresponding graph $C^*$-algebras $C^*(E)$ and $C^*(F)$ are isomorphic.  This has consequences for the ``Morita equivalence conjecture" and ``isomorphism conjecture" for graph algebras, and allows us to prove that when $E$ and $F$ belong to specific collections of graphs whose $C^*$-algebras are classified by ideal-related $K$-theory, Morita equivalence (respectively, isomorphism) of the Leavitt path algebras $L_\C(E)$ and $L_\C(F)$ implies strong Morita equivalence (respectively, isomorphism) of the graph $C^*$-algebras $C^*(E)$ and $C^*(F)$.  We state a number of corollaries that describe various classes of graphs where these implications hold.  In addition, we conclude with a classification of Leavitt path algebras of amplified graphs similar to the existing classification for graph $C^*$-algebras of amplified graphs.
\end{abstract}

\maketitle

\section{Introduction}

In \cite{gamt:isomorita} Gene Abrams and the second named author examined the relationship between the structures of the Leavitt path algebra and the graph $C^*$-algebra of a given graph.  In \cite[\S9]{gamt:isomorita} the authors made two conjectures: The \emph{Morita equivalence conjecture for graph algebras} states that if $E$ and $F$ are graphs and $L_\C(E)$ is Morita equivalent to $L_\C(F)$, then $C^*(E)$ is strongly Morita equivalent to $C^*(F)$.  The \emph{isomorphism conjecture for graph algebras} states that if $E$ and $F$ are graphs and $L_\C(E) \cong L_\C(F)$ (as rings), then $C^*(E) \cong C^*(F)$ (as $*$-algebras).  

In \cite{gamt:isomorita} the authors showed that the Morita equivalence conjecture and the isomorphism conjecture hold for row-finite graphs whose associated $C^*$-algebras are simple.  This was accomplished by arguing that if $L_\C(E)$ is Morita equivalent to $L_\C(F)$, then the algebraic $K$-theories of $L_\C(E)$ and $L_\C(F)$ are isomorphic, which implies the topological $K$-theories of $C^*(E)$ and $C^*(F)$ are isomorphic, and then classification theorems for simple $C^*$-algebras imply that $C^*(E)$ and $C^*(F)$ are strongly Morita equivalent.  A similar argument can be made using isomorphism in place of Morita equivalence by keeping track of the position of the class of the unit in the $K$-group.  The nontrivial parts of this argument involve (1) showing that the algebraic $K$-theories of $L_\C(E)$ and $L_\C(F)$ are isomorphic implies the topological $K$-theories of $C^*(E)$ and $C^*(F)$ are isomorphic, and (2) applying existing classification theorems for simple $C^*$-algebras.

Recently, classification theory for $C^*$-algebras has made a number of accomplishments in classifying particular collections of nonsimple $C^*$-algebras, and graph $C^*$-algebras have provided a fertile testing grounds in which many of these theorems may be applied.  Indeed, there is now a large list of distinct classes of graph $C^*$-algebras that are classified by $K$-theoretic information.  In all cases, the classifying invariant has been ideal-related $K$-theory, which consists of all cyclic six-term exact sequences of (topological) $K$-groups for all subquotients and the natural transformations between them.  (In the literature, ideal-related $K$-theory is also known by a variety of other names, including ``filtered $K$-theory" and ``filtrated $K$-theory".)

The purpose of this article is to define a notion of ideal-related algebraic $K$-theory for rings that applies to Leavitt path algebras, and to show that if two Leavitt path algebras of graphs satisfying Condition~(K) have isomorphic ideal-related algebraic $K$-theories, then the corresponding graph $C^*$-algebras have isomorphic ideal-related topological $K$-theories.  This result allows one to verify both the Morita equivalence conjecture and the isomorphism conjecture for graphs whose $C^*$-algebras are classified by ideal-related topological $K$-theory, and we describe many of these specific classes in a number of corollaries to our main results.  Throughout, we need the graphs to satisfy Condition~(K) to ensure, among other things, that the ideals in the graph algebra correspond to saturated hereditary subsets in the graph.  (We mention that at the time this is paper is written,  all existing classification results for graph $C^*$-algebras require Condition~(K) as well, so this is a mild hypothesis in the context of the existing theory.)

In addition to verifying the Morita equivalence conjecture and isomorphism conjecture for specific classes of graphs, our results on ideal-related $K$-theory also allow us to give a classification of Leavitt path algebras of amplified graphs with finitely many vertices that is similar to the existing classification for $C^*$-algebras.  Specifically, we show that two such Leavitt path algebras are classified by their ideal-related $K$-theories and also by the transitive closures of their graphs.  This is exactly what happens in the graph $C^*$-algebra case, and hence gives us a converse to the isomorphism conjecture for amplified graphs with finitely many vertices:  If $E$ and $F$ are amplified graphs with a finite number of vertices, then $L_\C(E) \cong L_\C(E)$ (as rings) if and only if $C^*(E) \cong C^*(F)$ (as $*$-algebras).

This paper is organized as follows.  In Section~\ref{Notation-Prelim-sec} we discuss notation and preliminaries regarding graphs, graph algebras, and algebraic $K$-theory of rings.  In Section~\ref{ideal-rel-def-sec} we introduce a definition for the ideal-related algebraic $K$-theory of a ring.  We face some obstacles that do not occur in the $C^*$-algebra setting: first of all, we need to consider all $K$-groups (not just the $K_0$-group and $K_1$-group) due to lack of Bott periodicity, and second we need to require that the ring and all of its subquotients satisfy \emph{excision} in order to ensure that long exact sequences of the $K$-groups exist.  In Section~\ref{mor-equiv-sec} we show in Theorem~\ref{t:induceiso} that if the Leavitt path algebras of two graphs satisfying Condition~(K) have isomorphic ideal-related algebraic $K$-theories, then the $C^*$-algebras of those graphs have isomorphic ideal-related topological $K$-theories.  Using this result, we are able to show in Theorem~\ref{t:meqstrmeq} that the Morita equivalence conjecture for graph algebras holds for any class of graphs whose $C^*$-algebras are classified up to strong Morita equivalence (equivalently, stable isomorphism) by their ideal-related topological $K$-theory.  In Section~\ref{Iso-Con-sec} we consider the isomorphism conjecture for graph algebras, which requires us to examine the position of the class of the unit in $K$-theory.  We show in Theorem~\ref{t:induceisounit} that if the ideal-related algebraic $K$-theories of two unital Leavitt path algebras of graphs satisfying Condition~(K) are isomorphic via an isomorphism taking the class of the unit to the class of the unit, then there is an isomorphism between the ideal-related topological $K$-theories of the graph $C^*$-algebras taking the class of the unit to the class of the unit.  In Section~\ref{amplified-LPA-classification-sec} we use our results on ideal-related $K$-theory to prove a classification result for amplified graphs. 

\smallskip

\noindent \textsc{Acknowledgements:} Part of the research for this article was carried out during the authors visit to the Centre de Recerca Matem\`{a}tica during the thematic program ``The Cuntz Semigroup and the Classification of $C^{*}$-algebras''.  The authors are indebted to this institution for its hospitality.

\section{Notation and Preliminaries} \label{Notation-Prelim-sec}

All of the graphs that we consider will be directed graphs with countably many edges and vertices.  We need this countability hypothesis to ensure that all our graph $C^*$-algebras are separable, which is a necessary hypothesis for many of the classification results that we use.  The countability hypothesis also ensures that the Leavitt path algebras of our graphs have countable sets of enough idempotents, which allows us to use the fact that $L_K(E)$ is Morita equivalent to $L_K(F)$ if and only if $\textsf{M}_\infty (L_K(E)) \cong \textsf{M}_\infty(L_K(F))$ (see \cite[Definition~9.6, Definition~9.9, and Proposition~9.10]{gamt:isomorita}).

\begin{defin}
A \emph{directed graph} $E := (E^0, E^1, r_E, s_E)$ consists of a countable set of vertices $E^0$, a countable set of edges $E^1$, and maps $r_E: E^1 \to E^0$ and $s_E:E^1 \to E^0$ identifying the range and source of each edge.  When we have only one graph, or it is clear from context, we will often drop the subscript on the range and source maps and simply write $r := r_E$ or $s := s_E$.
\end{defin}

\begin{defin}
Let $E = (E^0, E^1, r_E, s_E)$ be a graph.  A \emph{path} of $E$ is a finite sequence of edges $\alpha := e_1 e_2 \ldots e_n$ with $r_E(e_i) = s_E(e_{i+1})$ for $1 \leq i \leq n$, and we call the natural number $n$ the \emph{length} of the graph.  We also consider the vertices to be paths of length zero.  We extend the range and source maps to the collection of paths as follows:  If $\alpha = e_1 e_2 \ldots e_n$ is a path of positive length, we set $s_E(\alpha) = s_E(e_1)$ and $r_E(\alpha) = r(e_n)$, and if $\alpha = v \in E^0$ is a path of length zero we set $s_E(v) = r_E(v) = v$.

A \emph{cycle} in $E$ is a path $\alpha := e_1 \ldots e_n$ of positive length with $s_E(e_1) = r_E(e_n)$.  We call the vertex $s_E(e_1)$ the \emph{base point} of the cycle $\alpha$.  A cycle is called a \emph{simple cycle} if $r_E(e_i) \neq s_E(e_1)$ for $1 \leq i \leq n-1$.  A graph $E$ is said to satisfy \emph{Condition (K)} if no vertex of $E$ is the base point of exactly one simple cycle.

A graph is called an \emph{amplified graph} if for every two vertices $v, w \in E^0$ there are either no edges from $v$ to $w$ or there are countably many edges from $v$ to $w$.  (Equivalently, a graph is an amplified graph if whenever there is an edge between one vertex $v$ to another vertex $w$, then there are infinitely many edges from $v$ to $w$.)
\end{defin}

\begin{defin}
Let $E = ( E^{0} , E^{1} , r_{E} , s_{E} )$ be a graph and let $R$ be a ring.  A collection $\setof{ v, e, e^{*} }{ v \in E^{0}, e\in E^{1} } \subseteq R$ is a \emph{Leavitt $E$-family in $R$} if $\setof{ v }{ v \in E^{0} }$ is a collection of pairwise orthogonal idempotents and the following conditions are 
satisfied: 
\begin{itemize}
\item[(1)] $s_{E}(e)e = er_{E}(e) = e$ for all $e \in E^{1}$; 
\item[(2)] $r_{E}(e)e^{*} = e^{*} s_{E}(e) = e^{*}$ for all $e \in E^{1}$; 
\item[(3)] $e^{*} f = \delta_{e,f} r_{E}(e)$ for all $e, f \in E^{1}$; and 
\item[(4)] $v = \sum_{ e \in s_{E}^{-1} ( \{ v \} ) } ee^{*}$ whenever $0 < | s_{E}^{-1} ( \{ v \} )| < \infty$. 
\end{itemize}
If $K$ is a field, the \emph{Leavitt path algebra of $E$ with coefficients in $K$}, denoted by $L_{K} (E)$, is the universal $K$-algebra generated by a Leavitt $E$-family.
\end{defin}

\begin{defin}
Let $E = ( E^{0} , E^{1} , r_{E} , s_{E} )$ be a graph.  A collection 
\begin{align*}
\setof{ p_{v}, s_{e} }{ v \in E^{0}, e \in E^{1} }
\end{align*}
in a $C^{*}$-algebra $\mathfrak{A}$ is a \emph{Cuntz-Krieger $E$-family in $\mathfrak{A}$} if $\setof{ p_{v} }{ v \in E^{0} }$ consists of pairwise orthogonal projections, $\setof{ s_{e} }{ v \in E^{0} }$ is a collection of partial isometries, and the following conditions are 
satisfied: 
\begin{itemize}
\item[(CK1)] $s_{e}^{*} s_{f}  = \delta_{e,f} p_{ r_{E}(e) }$ for all $e,f \in E^{1}$; 
\item[(CK2)] $s_{e} s_{e}^{*} \leq p_{ s_{E} (e) }$ for all $e \in E^{1}$; and
\item[(CK3)] $p_{v} = \sum_{ e \in s_{E}^{-1} ( \{ v \} ) } s_{e}s_{e}^{*}$ whenever $0 < | s_{E}^{-1} ( \{ v \} )| < \infty$. 
\end{itemize}
The graph $C^{*}$-algebra, denoted by $C^{*} (E)$, is defined to be the universal $C^{*}$-algebra generated by a Cuntz-Krieger $E$-family.
\end{defin}

\begin{defin}
Let $E$ be a graph and let $v, w$ be in $E^{0}$.  We write $v \geq w$ if there exists a path from $v$ to $w$.  A subset $H$ of $E^{0}$ is \emph{hereditary} if for each $v, w \in E^{0}$ with $v \geq w$, $v \in H$ implies that $w \in H$.  A hereditary subset $H$ of $E^{0}$ is \emph{saturated} if for each $v \in E^{0}$ with $0 < | s_{E}^{-1} ( \{ v \} ) | < \infty$, we have
\begin{align*}
r_{E} ( s_{E}^{-1} ( \{ v \} ) ) \subseteq H \text{ implies } v \in H.
\end{align*} 
We will denote the lattice of saturated hereditary subsets of $E^{0}$ by $\mathcal{H}( E )$.  Note that we always have $\emptyset \in \mathcal{H}(E)$ and $E^0 \in \mathcal{H}(E)$.
\end{defin}

\begin{notat}
Let $E$ be a graph and let $H$ be an element of $\mathcal{H} (E )$.  We write $I_{H}^{\mathrm{alg} }$ to denote the two-sided ideal of $L_{\C} ( E )$ generated by $\setof{ v }{ v \in H }$, and we write $I_{H}^{ \mathrm{top} }$ to denote the closed two-sided ideal of $C^{*} ( E )$ generated by $\setof{ p_{v} }{ v \in H }$.
\end{notat}

We define a conjugate linear involution on $L_{ \mathbb{C} } ( E )$ by 
\begin{align*}
\left( \sum \lambda_{i} \alpha_{i} \beta_{i}^{*} \right)^{*} = \sum \overline{ \lambda_{i} } \beta_{i} \alpha_{i}^{*}.
\end{align*} 
With this involution $L_{\mathbb{C} } (E)$ is a complex $*$-algebra with the universal property that if $\mathfrak{A}$ is a complex $*$-algebra and $\setof{ a_{v} , b_{e} }{ v \in E^{0} , e \in E^{1} } \subseteq \mathfrak{A}$ is a set of elements satisfying 

\begin{itemize}
\item[(1)] the $a_{v}$'s are pairwise orthogonal and $a_{v} = a_{v}^{2} = a_{v}^{*}$ for all $v \in E^{0}$;

\item[(2)] $a_{ s_{E}(e) } b_{e} = b_{e} a_{ r_{E}(e) } = b_{e}$ for all $e \in E^{1}$;

\item[(3)] $b_{e}^{*} b_{f} = \delta_{e,f } a_{ r_{E}(e) }$ for all $e, f \in E^{1}$; and

\item[(4)] $a_{v} =  \sum_{ e \in s_{E}^{-1} ( \{v \} ) }  b_{e} b_{e}^{*}$ whenever $0 < | s_{E}^{-1} ( \{ v \} ) | < \infty$,
\end{itemize} 
then there exists a unique algebra $*$-homomorphism $\ftn{ \phi }{ L_{\mathbb{C} } ( E ) }{ \mathfrak{A} }$ satisfying $\phi ( v ) = a_{v}$ and $\phi ( e ) = b_{e}$ for all $v \in E^{0}$ and $e \in E^{1}$.

\begin{theor}\label{t:densesubalg}\cite[Theorem~7.3]{mt:idealst}
$ $
\begin{itemize}
\item[(1)] For any graph $E$, there exists an injective algebra $*$-homomorphism 
\begin{align*}
\ftn{ \iota_{E} }{ L_{ \mathbb{C} } ( E ) }{ C^{*} ( E ) }
\end{align*}
with $\iota_{E} ( v ) = p_{v}$ and $\iota_{E} ( e ) = s_{e}$ for all $v \in E^{0}$ and $e \in E^{1}$.  

\item[(2)] If $E$ is a row-finite graph satisfying Condition (K), then the map $I_{H}^{\mathrm{alg} } \mapsto I_{H}^{ \mathrm{top} }$ is a lattice isomorphism from the lattice of two-sided ideals of $L_\C(E)$ onto the lattice of closed two-sided ideals of $C^*(E)$.  Moreover, the closure of $\iota_{E} ( I_{H}^{\mathrm{alg} }  )$ is equal to $I_{H}^{\mathrm{top}}$ for all $H \in \mathcal{H} ( E )$.
\end{itemize}
\end{theor}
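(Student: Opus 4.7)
The plan for part (1) splits into existence and injectivity. For existence, I would invoke the universal property of $L_\C(E)$ displayed immediately before the theorem. In $C^*(E)$, set $a_v := p_v$ and $b_e := s_e$; the Cuntz--Krieger relations (CK1), (CK2), (CK3), together with the fact that the $p_v$ are pairwise orthogonal projections and the source/range identities $p_{s(e)} s_e = s_e p_{r(e)} = s_e$, exactly match conditions (1)--(4) of the universal property. This yields a unique $*$-homomorphism $\iota_E \colon L_\C(E) \to C^*(E)$ sending $v \mapsto p_v$ and $e \mapsto s_e$.

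For injectivity, I would apply the graded uniqueness theorem for Leavitt path algebras. Equip $L_\C(E)$ with its canonical $\Z$-grading ($\deg(v)=0$, $\deg(e)=1$, $\deg(e^*)=-1$). The gauge action $\gamma \colon \mathbb{T} \to \Aut(C^*(E))$ defined by $\gamma_z(p_v) = p_v$ and $\gamma_z(s_e) = z s_e$ induces a compatible $\Z$-grading on the dense $*$-subalgebra generated by the Cuntz--Krieger family, and $\iota_E$ is graded with respect to these gradings. Since each vertex projection $p_v$ is nonzero in $C^*(E)$ (verified by exhibiting explicit Cuntz--Krieger families on Hilbert space with nonzero vertex projections), the graded uniqueness theorem forces $\iota_E$ to be injective.

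For part (2), I would combine two lattice isomorphisms that are established (independently) in the algebraic and $C^*$-algebraic literatures. For a row-finite graph $E$ satisfying Condition~(K), the map $H \mapsto I_H^{\mathrm{alg}}$ is a lattice isomorphism from $\mathcal{H}(E)$ onto the lattice of two-sided ideals of $L_\C(E)$; likewise $H \mapsto I_H^{\mathrm{top}}$ is a lattice isomorphism from $\mathcal{H}(E)$ onto the lattice of closed two-sided ideals of $C^*(E)$. Composing the first bijection with the inverse of the second yields the asserted lattice isomorphism $I_H^{\mathrm{alg}} \mapsto I_H^{\mathrm{top}}$. For the closure statement, $\overline{\iota_E(I_H^{\mathrm{alg}})}$ is a closed two-sided $*$-ideal of $C^*(E)$ containing $p_v = \iota_E(v)$ for every $v \in H$, so it contains $I_H^{\mathrm{top}}$, the smallest such ideal. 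Conversely, each generator $v$ of $I_H^{\mathrm{alg}}$ (with $v \in H$) satisfies $\iota_E(v) = p_v \in I_H^{\mathrm{top}}$, hence $\iota_E(I_H^{\mathrm{alg}}) \subseteq I_H^{\mathrm{top}}$, and since $I_H^{\mathrm{top}}$ is closed, the closure is contained there as well.

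The main obstacle is injectivity in part (1). Without Condition~(K) or simplicity, no Cuntz--Krieger-type uniqueness theorem is available, so the graded uniqueness theorem (whose own proof requires careful analysis of the grading structure and the ``reduction to vertex form'' technique for Leavitt path algebras) is indispensable. The structural lattice isomorphisms used in part (2) also rely essentially on Condition~(K), which is what rules out non-basic ideals and guarantees that $\mathcal{H}(E)$ exhausts both ideal lattices.
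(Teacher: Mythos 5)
Your argument is correct, and it is essentially the standard proof of this result: the paper itself does not prove Theorem~\ref{t:densesubalg} but cites \cite[Theorem~7.3]{mt:idealst}, where part (1) is obtained exactly as you do (universal property of $L_\C(E)$ applied to the Cuntz--Krieger family, then the graded uniqueness theorem with the $\Z$-grading on the dense $*$-subalgebra induced by the gauge action, using that all $p_v\neq 0$), and part (2) follows, as in your write-up, from the two lattice isomorphisms $\mathcal{H}(E)\to\ilat(L_\C(E))$ and $\mathcal{H}(E)\to\ilatcl(C^*(E))$ valid for row-finite graphs with Condition~(K), plus the two-sided containment argument for $\overline{\iota_E(I_H^{\mathrm{alg}})}=I_H^{\mathrm{top}}$.
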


\begin{remar}
In light of Theorem~\ref{t:densesubalg}, we shall use the map $\ftn{ \iota_{E} }{ L_{ \mathbb{C} } ( E ) }{ C^{*} ( E ) }$ to identify the set of generators $\setof{ v, e , e^{*} }{ v \in E^{0} , e \in E^{1} } \subseteq L_{\C}(E)$ with the set of generators $
\setof{ p_{v} , s_{e}, s_{e}^{*} }{ v \in E^{0}, e \in E^{1} } \subseteq C^{*} (E)$, and write $L_{\C} (E) \subseteq C^{*} (E)$ when we do so. 
\end{remar}

\begin{defin}
For every $H \in \mathcal{H} ( E )$, define $\ftn{ \iota_{E,H} }{ I_{H}^{ \mathrm{alg} } }{ I_{H}^{\mathrm{top} } }$ by $\iota_{E,H} = ( \iota_{E} ) \vert_{H}$.  
\end{defin}

Note that if $H_{1}, H_{2} \in \mathcal{H} ( E )$ with $H_{1} \subseteq H_{2}$, then the diagram
\begin{equation*}
\xymatrix{
0 \ar[r] & I_{H_{1}}^{ \mathrm{alg} } \ar[r] \ar[d]^{ \iota_{E, H_{1} } } & I_{H_{2}}^{ \mathrm{alg} } \ar[r] \ar[d]^{ \iota_{E, H_{2} } } &  I_{H_{2}}^{\mathrm{alg} } / I_{H_{1}}^{\mathrm{alg} } \ar[r] \ar[d]^{ \iota_{E, H_{2}/H_{1} } }  & 0 \\
0 \ar[r] & I_{H_{1}}^{ \mathrm{top} } \ar[r] & I_{H_{2}}^{ \mathrm{top} } \ar[r] & I_{H_{2}}^{\mathrm{top} } / I_{H_{1}}^{\mathrm{top} } \ar[r] & 0 
}
\end{equation*}
commutes.

For a ring $R$ we let $\kalg_{*} ( R )$ denote the algebraic $K$-theory of a ring, $\textit{KH}_{*} ( R)$ denote the homotopy algebraic $K$-theory introduced by C.~Weibel in \cite{cb:homtopalgkthy}.  Recall that there is a comparison map $\kalg_{*} ( R ) \rightarrow \textit{KH}_{*} ( R )$ (see \cite{cb:homtopalgkthy} for details).  For a Banach algebra $\mathfrak{A}$ we let $K_{*}^{ \mathrm{top} } ( \mathfrak{A} )$ denote the topological $K$-theory of $\mathfrak{A}$.  

\begin{defin}
Let $H_{1}, H_{2} \in \mathcal{H} ( E )$ with $H_{1} \subseteq H_{2}$.  We define the comparison map $\gamma_{n, H_{2} / H_{1} }^{E} : \kalg_{n} ( I_{H_{2}}^{\mathrm{alg}} / I_{H_{1}}^{\mathrm{alg}}  ) \to  \ktop_{n} (I_{H_{2}}^{\mathrm{top}} /  I_{H_{1}}^{\mathrm{top}} )$ to be the composition 
\begin{align*}
\kalg_{n} ( I_{H_{2}}^{\mathrm{alg}} / I_{H_{1}}^{\mathrm{alg}}  ) \rightarrow \kalg_{n} (I_{H_{2}}^{\mathrm{top}} / I_{H_{1}}^{\mathrm{top}}  ) \rightarrow \textit{KH}_{n} (I_{H_{2}}^{\mathrm{top}} /  I_{H_{1}}^{\mathrm{top}} ) \rightarrow \ktop_{n} (I_{H_{2}}^{\mathrm{top}} /  I_{H_{1}}^{\mathrm{top}} ).
\end{align*}
When $H_{1} = \emptyset$ we write $\gamma_{n, H_{2} }^{E} := \gamma_{n, H_{2} / H_{1} }^{E}$. 
When $H_{2} = E^0$ and $H_{1} = \emptyset$, we write $\gamma_{n}^{E} := \gamma_{n, H_{2} / H_{1} }^{E}$. 
\end{defin}

\begin{remar}\label{r:cmap}
Recall that if $\mathfrak{A}$ is a unital $C^{*}$-algebra, then there exists a surjective homomorphism 
\begin{equation*}
\kalg_{1}( \mathfrak{A} ) = \mathrm{GL} ( \mathfrak{A} ) / [ \mathrm{GL} ( \mathfrak{A} ) , \mathrm{GL} ( \mathfrak{A} ) ] \rightarrow \mathrm{GL} ( \mathfrak{A} ) / \mathrm{GL} ( \mathfrak{A} )_{0} = K_{1}^{ \mathrm{top} } ( \mathfrak{A} ).
\end{equation*}
Let $\mathfrak{A}$ be a $C^{*}$-algebra.  Denote the ring obtained by adjoining a unit to $\mathfrak{A}$ by $\widetilde{\mathfrak{A} }^{\mathrm{alg} }$, and denote the $C^{*}$-algebra obtained by adjoining a unit to $\mathfrak{A}$ by $\widetilde{ \mathfrak{A} }^{\mathrm{top} }$.  Note that there exists a natural unital ring homomorphism from $\widetilde{ \mathfrak{A} }^{\mathrm{alg} }$ to $\widetilde{  \mathfrak{A} }^{\mathrm{top} }$ sending $a + n 1 \in \widetilde{ \mathfrak{A} }^{\mathrm{alg} }$ to $a + n 1 \in \widetilde{  \mathfrak{A} }^{\mathrm{top} }$.  This homomorphism induces a homomorphism from $\kalg_{1} ( \widetilde{ \mathfrak{A} }^{\mathrm{alg}} )$ to $\ktop_{1} (  \widetilde{ \mathfrak{A} }^{\mathrm{top} } )$, and the composition
\begin{equation*}
\kalg_{1} ( \mathfrak{A} ) \rightarrow \textit{KH}_{1} ( \mathfrak{A} ) \rightarrow \ktop_{1} ( \mathfrak{A} )
\end{equation*}
is equal to the composition
\begin{equation*}
\kalg_{1} ( \mathfrak{A} ) \rightarrow \kalg_{1} ( \widetilde{ \mathfrak{A} }^{\mathrm{alg} } ) \rightarrow \kalg_{1} ( \widetilde{ \mathfrak{A}}^{\mathrm{top} } ) \rightarrow \ktop_{1} ( \widetilde{ \mathfrak{A} }^{\mathrm{top} } )  = K_{1}^{ \mathrm{top} } ( \mathfrak{A} ).
\end{equation*}
\end{remar}

$ $

\section{Ideal-related $K$-theory} \label{ideal-rel-def-sec}

In this section we recall the definition of ideal-related topological $K$-theory for $C^*$-algebras, and we introduce a definition of ideal-related algebraic $K$-theory for rings.

\begin{defin}
If $R$ is a ring, we let $\ilat (R)$ denote the lattice of all two-sided ideals of $R$.  If $\mathfrak{A}$ is a $C^{*}$-algebra, we let $\ilatcl( \mathfrak{A} )$ denote the lattice of all closed two-sided ideals of $\mathfrak{A}$.  A \emph{subquotient of $R$} is a ring of the  form $I_{2} / I_{1}$ for some $I_{2} , I_{1} \in \ilat (R )$ with $I_{1} \subseteq I_{2}$, and a \emph{subquotient of $\mathfrak{A}$} is a $C^{*}$-algebra of the form $\mathfrak{I}_{2} / \mathfrak{I}_{1}$ for some $\mathfrak{I}_{2} , \mathfrak{I}_{1} \in \ilatcl( \mathfrak{A} )$ with $\mathfrak{I}_{1} \subseteq \mathfrak{I}_{2}$.
\end{defin}

\begin{defin}\label{topKweb}
Let $\mathfrak A$ be a $C^*$-algebra. Note that for any three ideals  $\mathfrak{I}_{1}, \mathfrak{I}_{2}, \mathfrak{I}_{3} \in  \ilatcl(\mathfrak A)$ with $\mathfrak{I}_1 \subseteq \mathfrak{I}_{2} \subseteq \mathfrak{I}_{3}$, the short exact sequence 
\begin{align*}
0 \to \mathfrak{I}_{2}/ \mathfrak{I}_{1} \to \mathfrak{I}_{3}/ \mathfrak{I}_{1} \to \mathfrak{I}_{3}/ \mathfrak{I}_{2} \to 0
\end{align*}
induces a  six-term exact sequence in $K$-theory:
\[
\xymatrix{
\ktop_{0}(\mathfrak{I}_{2}/ \mathfrak{I}_{1} )  \ar[r]^{\iota_*}& \ktop_{0}(\mathfrak{I}_{3} / \mathfrak{I}_{1})\ar[r]^-{\pi_*}&\ktop_{0}(\mathfrak{I}_{3}/\mathfrak{I}_{2}) \ar[d]^{\partial}\\
\ktop_{1} (\mathfrak{I}_{3}/ \mathfrak{I}_{2}) \ar[u]^-{\partial}&\ktop_{1} (\mathfrak{I}_{3}/ \mathfrak{I}_{1})\ar[l]^-{\pi_*}&\ktop_{1}(\mathfrak{I}_{2}/ \mathfrak{I}_{1})\ar[l]^-{\iota_*}.
}
\]
\begin{itemize}
\item[(1)]  We define $\iktop(\mathfrak A)$ of $\mathfrak A$ to be the collection of all $K$-groups thus occurring, equipped with the natural transformations $\{\iota_*,\pi_*,\partial\}$.  

\item[(2)] The \textsl{ideal-related topological $K$-theory} $\oiktop(\mathfrak A)$ of $\mathfrak{A}$ is $\iktop ( \mathfrak{A} )$ together with the positive cone $\ktop_{0} ( \mathfrak{I}_{2} / \mathfrak{I}_{1} )^+$ for all subquotients $\mathfrak{I}_{2}/ \mathfrak{I}_{1}$.
\end{itemize}
If $\mathfrak{A}$ and $\mathfrak{B}$ are $C^{*}$-algebras, we say \emph{$\oiktop (\mathfrak A)$ is isomorphic to $\oiktop(\mathfrak B)$}, and write
$\oiktop(\mathfrak A)\cong \oiktop(\mathfrak B)$, if the following three conditions hold:
\begin{itemize}
\item[(i)] there exists a lattice isomorphism $\ftn{ \beta }{ \ilatcl( \mathfrak{A} ) }{ \ilatcl( \mathfrak{B} ) }$;
\item[(ii)] for each pair of ideals $\mathfrak{I}_{1},  \mathfrak{I}_{2} \in \ilatcl( \mathfrak{A} )$ with $\mathfrak{I}_{1} \subseteq  \mathfrak{I}_{2}$ there exist group isomorphisms
$$
\alpha^{\mathfrak{I}_{1},\mathfrak{I}_{2} }_n:\ktop_{n}( \mathfrak{I}_{2} / \mathfrak{I}_{1} )\to \ktop_n( \beta( \mathfrak{I}_{2} ) / \beta( \mathfrak{I}_{1} ) ) \quad \text{ for $n=0,1$}
$$
with $\alpha^{\mathfrak{I}_{1},\mathfrak{I}_{2} }_0$ an order isomorphism; and 
\item[(iii)] this collection of isomorphisms preserves all natural transformations (described in (1) above).
\end{itemize}
In this case we consider the pair $$\phi := \left( \left\{ \alpha^{\mathfrak{I}_{1},\mathfrak{I}_{2} }_n : \mathfrak{I}_{1},  \mathfrak{I}_{2} \in \ilatcl( \mathfrak{A} ) \text{ with } \mathfrak{I}_{1} \subseteq  \mathfrak{I}_{2}, n =0,1 \right\}, \beta \right)$$ to be a morphism from $\oiktop (\mathfrak A)$ to $\oiktop(\mathfrak B)$, and we write 
\begin{align*}
\phi : \oiktop (\mathfrak A) \to \oiktop(\mathfrak B)
\end{align*}
and call $\phi$ an \emph{isomorphism}.  (The only morphisms we will need to consider are isomorphisms.)
\end{defin}

\begin{remar}
The ideal-related topological $K$-theory that we define above is known by several other names in the literature.  In addition to ``ideal-related $K$-theory" it is also called ``filtered $K$-theory" \cite{gr:ckalg} or ``filtrated $K$-theory" \cite{ERRlinear}, and it is closely related to the $K$-web introduced by Boyle and Huang \cite{Boyle-Huang}.  To make matters more confusing, Meyer and Nest have also defined a notion of  ``filtrated $K$-theory" that has additional natural transformations besides the ones we have included here \cite{filtrated, rasmusmanuel}.  Although defined differently, it is not known whether the ``filtrated $K$-theory" of Meyer and Nest is equivalent to the definition we have given above.
\end{remar}

In analogy with $C^*$-algebras, we wish to introduce a notion of ideal-related $K$-theory for rings.  In general, algebraic $K$-theory does not satisfy Bott periodicity and so we will need all $K$-groups --- not just $K_0$ and $K_1$ --- in our definition.  In addition, when $R$ is a general ring, and $I$ is an ideal of $R$, the short exact sequence $0 \to I \to R \to R/I \to 0$ does not necessarily induce a long exact sequence of algebraic $K$-groups.  In order to ensure  such long exact sequences for our definition, we must restrict our attention to rings in which every subquotient satisfies excision.  We shall show that all Leavitt path algebras have this property, and thus our definition of algebraic ideal-related $K$-theory will suffice for our purposes.

\begin{defin}
A ring $R$ is said to satisfy \emph{excision in algebraic $K$-theory} if whenever $A$ is a unital ring that contains $R$ as a two-sided ideal, the natural map $\kalg_*(R) \to \kalg_*(A,R)$ is an isomorphism, where $\kalg_*(R)$ denotes algebraic $K$-theory of $A$ and $\kalg_*(A,R)$ denotes the relative algebraic $K$-theory of $A$.  The relative algebraic $K$-theory $\kalg_*(A,R)$ is defined so that the relative algebraic $K$-groups satisfy a long exact sequence.  Thus if $R$ satisfies excision in algebraic $K$-theory and $A$ is an arbitrary (not necessarily unital) ring containing $R$ as a two-sided ideal, then the natural sequence 
\begin{align*}
\xymatrix{
\kalg_{n} ( R ) \ar[r]^{ \iota_{*} } & \kalg_{n} ( A ) \ar[r]^{ \pi_{*} } & \kalg_{n} ( A / R ) \ar[r]^{ \partial_{*}} & \kalg_{n-1} ( R ) 
}
\end{align*}
is exact for each $n \in \Z$.
\end{defin}

\begin{defin}\label{algKweb}
Let $R$ be a ring such that every subquotient $I_{2} / I_{1}$ of $R$ satisfies excision in algebraic $K$-theory.  Then for any three ideals $I_1, I_2, I_3 \in \ilat(R)$ with $I_1 \subseteq I_2 \subseteq I_3$, we have an exact sequence
\[
\xymatrix{
\kalg_n(I_{2}/ I_{1} )  \ar[r]^{\iota_*}& \kalg_n(I_{3} / I_{1}) \ar[r]^-{\pi_*}&\kalg_n (I_{3}/I_{2}) \ar[r]^{\partial} & \kalg_{n-1} (I_{3}/ I_{2})
}
\]
for each $n \in \Z$.
\begin{itemize}
\item[(1)]   We define $\ikalg(R)$ to be the collection of all $K$-groups thus occurring, equipped with the natural transformations $\{\iota_*,\pi_*,\partial\}$.  

\item[(2)] The \textsl{ideal-related algebraic $K$-theory} $\oikalg (R)$ of $R$ consists of $\ikalg(R)$ together with the positive cone of $\kalg_{0}( I_{2} / I_{1} )$ for all two-sided ideals $I_{2} , I_{1} \in \ilat( R )$ with $I_{1} \subseteq I_{2}$.
\end{itemize}

Let $R$ and $S$ be rings that each have the property that every subquotient satisfies excision in algebraic $K$-theory.  We will say that \emph{$\oikalg(R)$ is isomorphic to $\oikalg(S)$}, and write $\oikalg(R)\cong \oikalg(S)$, if the following three conditions hold:
\begin{itemize}
\item[(i)]  there exists a lattice isomorphism $\ftn{ \beta }{ \ilat(R) }{ \ilat( S ) }$;

\item[(ii)] for each pair of ideals $I_{1}, I_{2} \in \ilat(R)$ with $I_1 \subseteq I_2$ there exist group isomorphisms
$$
\alpha^{I_{1},I_{2} }_n:\kalg_{n}( I_{2} / I_{1} )\to \kalg_{n}( \beta( I_{2} ) / \beta( I_{1} ) ) \quad \text{for each $n \in \Z$}
$$
with $\alpha^{I_{1},I_{2} }_0$ an order isomorphism; and
\item[(iii)] this collection of isomorphisms preserves all natural transformations.
\end{itemize}
In this case we consider the pair $$\phi := \left( \left\{ \alpha^{I_{1}, I_{2} }_n : I_{1},  I_{2} \in \ilat( R ) \text{ with } I_{1} \subseteq  I_{2}, n \in \Z  \right\}, \beta \right)$$ to be a morphism from $\oikalg (R)$ to $\oikalg(S)$, and we write 
\begin{align*}
\phi : \oikalg (R) \to \oikalg(S)
\end{align*} 
and call $\phi$ an \emph{isomorphism}.  (The only morphisms we will need to consider are isomorphisms.)
\end{defin}

\begin{remar}
If we define $\oiktop( \mathfrak{A} )\equiv \oiktop( \mathfrak{B} )$ when $\oiktop( \mathfrak{A} )\cong \oiktop( \mathfrak{B} )$ via an isomorphism having $\alpha^{ \mathfrak{I}_{1} , \mathfrak{I}_{2} }_{n} = \alpha^{ \mathfrak{J}_{1} , \mathfrak{J}_{2} }_{n}$ whenever $\mathfrak{I}_{2} / \mathfrak{I}_{1} \cong \mathfrak{J}_{2} / \mathfrak{J}_{1}$, and if we likewise define $\oikalg(R)\equiv \oikalg(S)$ when $\oikalg(R)\cong \oikalg(S)$ via an isomorphism having $\alpha^{I_{1} , I_{2} }_{n} = \alpha^{ J_{1} , J_{2} }_{n}$ whenever $I_{2} / I_{1} \cong J_{2} / J_{1}$, then the equivalence $\equiv$ is closer to the definition of isomorphism of filtrated $K$-theory given in \cite{filtrated}.  The results of Section~\ref{mor-equiv-sec} and Section~\ref{Iso-Con-sec} still hold if our isomorphism of ideal-related $K$-theory is replaced by $\equiv$.
\end{remar}

\begin{defin}
Let $R$ be a ring.  A sequence $\{e_n \}_{n=1}^\infty \subseteq R$ is a \emph{countable approximate unit consisting of idempotents} if the following three properties hold:
\begin{itemize}
\item[(1)] each $e_n$ is an idempotent,
\item[(2)] $e_{n+1} e_n = e_n e_{n+1} =  e_n$ for all $n \in \N$, and
\item[(3)] for any $r \in R$ there exists $n \in \N$ such that $e_nr = re_n = r$.
\end{itemize}
\end{defin}

\begin{remar} \label{sigma-unital-dir-lim-rem}
Note that if $R$ is a ring with a countable approximate unit consisting of idempotents $\{ e_n \}_{n=1}^\infty$, then $R = \bigcup_{i=1}^\infty e_i R e_i$ with $e_1Re_1 \subseteq e_2Re_2 \subseteq \ldots$ and each $e_n R e_n$ a unital subring of $R$ with unit $e_n$.  Thus, in this case, $R$ is the increasing union of a sequence of unital subrings, and hence also a direct limit of a sequence of unital rings.
\end{remar}

\begin{remar}
We wish to show that any ring with a countable approximate unit consisting of idempotents satisfies excision in algebraic $K$-theory.  The most general results on excision are due to Suslin in \cite{as:excisionkthy} where he proves that a ring $R$ satisfies excision if and only if
\begin{equation} \label{H'-unital-eq}
\operatorname{Tor}_n^{\tilde{R}} (R, \Z) = 0 \text{ for $n \geq 0$,}
\end{equation} 
where $\tilde{R}$ denotes the unitization of $R$.  A ring $R$ is said to be \emph{$H'$-unital} if it satisfies \eqref{H'-unital-eq}.  If $R$ is torsion-free as a $\Z$-module, then $R$ is $H'$-unital if and only if $R$ is $H$-unital in the sense of Wodzicki \cite{mw:excisioncyclichom}.  Suslin and Wodzicki proved in \cite[Theorem~B]{sw:excisionalgktheory} that if the ring $R$ is a $\Q$-algebra, then $R$ satisfies excision in algebraic $K$-theory if and only if $R$ is $H$-unital.  Unital rings are both $H$-unital and $H'$-unital \cite[Remark~2.2]{abc:kthyleavitt}.  Using these facts we are able to establish the following lemma.
\end{remar}

\begin{lemma}\label{l:excisionlocalunit}
Any ring with a countable approximate unit consisting of idempotents satisfies excision in algebraic $K$-theory.  
\end{lemma}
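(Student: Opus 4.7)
The plan is to invoke the Suslin--Wodzicki criterion recalled in the preceding remark: a ring $R$ satisfies excision in algebraic $K$-theory if and only if it is $H'$-unital, i.e., $\operatorname{Tor}^{\widetilde{R}}_n(R, \Z) = 0$ for all $n \geq 0$. The task thus reduces to verifying $H'$-unitality for a ring $R$ equipped with a countable approximate unit $\{e_n\}_{n=1}^\infty$ of idempotents.

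The first step is to observe that $R$ is a ring with \emph{local units} in the classical sense: given any finite set $\{r_1,\dots,r_k\} \subseteq R$, property~(3) of the approximate unit produces some $n_i$ with $e_{n_i} r_i = r_i = r_i e_{n_i}$ for each $i$, and the nesting relation in~(2) then forces $e_N r_i = r_i = r_i e_N$ for every $i$, where $N := \max_i n_i$. In particular, every finite subset is contained in the unital subring $e_N R e_N$, and the identity $R = R^2$ holds.

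Next I would establish $H$-unitality by showing the non-unital bar complex $B'_*(R)$, with $B'_n(R) = R^{\otimes n}$ and differential
\begin{equation*}
b'(r_1 \otimes \cdots \otimes r_n) = \sum_{i=1}^{n-1} (-1)^{i-1} r_1 \otimes \cdots \otimes r_i r_{i+1} \otimes \cdots \otimes r_n,
\end{equation*}
is acyclic. Given a cycle $z \in B'_n(R)$, expressed as a finite $\Z$-linear combination of elementary tensors, the previous step lets me choose an idempotent $e = e_N$ acting as the identity on every entry appearing in $z$. A direct telescoping computation then yields the contracting-homotopy identity $b'(e \otimes z) = z - e \otimes b'(z)$, so $b'(z) = 0$ forces $z = b'(e \otimes z)$, exhibiting $z$ as a boundary.

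Finally, I would promote $H$-unitality to $H'$-unitality by running the analogous local-unit contracting homotopy on the relative bar resolution computing $\operatorname{Tor}^{\widetilde{R}}_*(R,\Z)$. The case $n = 0$ reduces to $R = R^2$, established above, and in positive degrees the same choice of $e = e_N$ yields a chain-level null-homotopy of any given cycle. The main technical obstacle is ensuring that this homotopy is compatible with the $\widetilde{R}$-module structure used in the Tor computation, rather than only the abelian-group structure used for $H$-unitality; the point is that each $e_N$ acts as a two-sided unit on the finite collection of elements involved in a given cycle, so the absorption identities continue to hold over $\widetilde{R}$ and the same degree-by-degree argument goes through. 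With $H'$-unitality in hand, Suslin's criterion delivers excision, completing the proof.
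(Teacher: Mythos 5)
Your local-units observation and the explicit homotopy identity $b'(e\otimes z)=z-e\otimes b'(z)$ are correct, and they do show that the bar complex $(R^{\otimes \bullet},b')$ is acyclic (the same argument works after tensoring with any abelian group), i.e.\ they establish $H$-unitality in Wodzicki's sense. The gap is in your final ``promotion'' step. Suslin's criterion, as recalled in the remark you invoke, requires the vanishing of the genuine derived-functor groups $\operatorname{Tor}^{\widetilde{R}}_{n}(R,\Z)$, which must be computed from a projective (or at least flat) resolution over $\widetilde{R}$. The bar-type resolution on which you propose to ``run the analogous contracting homotopy'' has terms of the form $\widetilde{R}\otimes_{\Z}R^{\otimes n}$, and such a module is flat over $\widetilde{R}$ only when $R^{\otimes n}$ is flat (torsion-free) over $\Z$. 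Since the lemma is stated for arbitrary rings with a countable idempotent approximate unit, $R$ may have $\Z$-torsion, in which case the bar resolution computes only relative Tor, and contracting its cycles is essentially a restatement of $H$-unitality rather than of $H'$-unitality. This is exactly the distinction drawn in the remark preceding the lemma: $H$- and $H'$-unitality coincide for $\Z$-torsion-free rings but not in general. Your proposed resolution of the ``technical obstacle'' (that each $e_N$ acts as a two-sided unit on the finitely many elements occurring in a cycle, so the absorption identities hold over $\widetilde{R}$) does not meet the real difficulty: the problem is not whether the homotopy is $\widetilde{R}$-linear, but whether the complex whose cycles you are contracting computes $\operatorname{Tor}^{\widetilde{R}}_{*}(R,\Z)$ at all.

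The gap is easy to close, by either of two short routes, and it is worth comparing them with the paper's proof. The paper argues softly: by Remark~\ref{sigma-unital-dir-lim-rem}, $R=\bigcup_{n}e_{n}Re_{n}$ is a direct limit of unital rings, unital rings are $H'$-unital and $H'$-unitality passes to direct limits \cite{abc:kthyleavitt}, and then Suslin's theorem \cite{as:excisionkthy} gives excision. Alternatively, your local-units setup yields a direct argument: $e_{n}R=e_{n}\widetilde{R}$ is a direct summand of $\widetilde{R}$, hence projective as a right $\widetilde{R}$-module, so $R=\varinjlim e_{n}\widetilde{R}$ is flat over $\widetilde{R}$; therefore $\operatorname{Tor}^{\widetilde{R}}_{n}(R,\Z)=0$ for $n\geq 1$, while $\operatorname{Tor}^{\widetilde{R}}_{0}(R,\Z)=R/R^{2}=0$ by the local-units fact you already proved. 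Either patch should replace your bar-complex step. (For the paper's actual applications the coefficient ring is $\C$, so $R$ is a $\Q$-algebra and your $H$-unitality argument would suffice via Suslin--Wodzicki \cite{sw:excisionalgktheory}; but the lemma as stated concerns arbitrary rings, where that shortcut is unavailable.)
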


\begin{proof}
Every unital ring is $H'$-unital (see \cite[Remark~2.2]{abc:kthyleavitt}).  In addition, by \cite[Remark~2.2]{abc:kthyleavitt}, the class of $H'$-unital rings is closed under direct limits.  Hence all rings with a countable approximate unit consisting of idempotents are $H'$-unital (see Remark~\ref{sigma-unital-dir-lim-rem}).  Suslin has proven that a ring satisfies excision in algebraic $K$-theory if and only if the ring is $H'$-unital \cite{as:excisionkthy}.   Thus all rings with a countable approximate unit consisting of idempotents satisfy excision in algebraic $K$-theory.  
\end{proof}

\begin{lemma} \label{l:sqlocalunits}
If $E$ satisfies Condition~(K), then any subquotient of $L_K(E)$ has a countable approximate unit consisting of idempotents.
\end{lemma}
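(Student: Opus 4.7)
The plan is to exhibit, for any subquotient $I_{H_2}^{\mathrm{alg}}/I_{H_1}^{\mathrm{alg}}$, an explicit sequence of idempotents satisfying the three defining conditions of a countable approximate unit of idempotents. My strategy consists of two reductions bracketing one key construction.

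First I would reduce to the case of an ideal. Since $E$ satisfies Condition~(K), every ideal of $L_K(E)$ is graded, and the quotient $L_K(E)/I_{H_1}^{\mathrm{alg}}$ is isomorphic to the Leavitt path algebra of a countable quotient graph that again satisfies Condition~(K); hence $I_{H_2}^{\mathrm{alg}}/I_{H_1}^{\mathrm{alg}}$ becomes an ideal $I_H^{\mathrm{alg}}$ inside this quotient Leavitt path algebra. It therefore suffices to construct a countable approximate unit of idempotents for an arbitrary graded ideal $I_H^{\mathrm{alg}}$ in a Leavitt path algebra $L_K(F)$ of a countable graph $F$ satisfying Condition~(K).

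Next comes the central construction: given any finite collection $\Gamma$ of paths in $F$ ending in $H$, I would produce an idempotent $f \in I_H^{\mathrm{alg}}$ acting as a two-sided identity on every monomial $\alpha\beta^*$ with $\alpha,\beta \in \Gamma$. Choose $N := \max\{|\alpha| : \alpha \in \Gamma\}$ and let $\Delta$ be the set of paths $\gamma$ extending some $\alpha \in \Gamma$ that are \emph{maximal} in the sense that either $|\gamma| = N$ or $r(\gamma)$ emits no edges. The elements of $\Delta$ are pairwise incomparable in the prefix order (a path of length $< N$ in $\Delta$ ends at a sink and so cannot be a proper prefix of any other path), so the idempotents $\gamma\gamma^*$ are pairwise orthogonal; moreover each $r(\gamma) \in H$ by hereditariness, so $f := \sum_{\gamma \in \Delta}\gamma\gamma^*$ is an idempotent lying in $I_H^{\mathrm{alg}}$. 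Iterating the Cuntz--Krieger relation $v = \sum_{s(e)=v}ee^*$ at regular vertices yields the identity $r(\alpha) = \sum_{\delta}\delta\delta^*$ where $\delta$ ranges over maximal extensions of $r(\alpha)$ of length at most $N - |\alpha|$; left-multiplying by $\alpha$ and right-multiplying by $\alpha^*$ gives $f\alpha = \alpha$ and $\alpha^* f = \alpha^*$, and hence $f \cdot \alpha\beta^* = \alpha\beta^* \cdot f = \alpha\beta^*$ as required.

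With local units in place, the second reduction assembles the countable approximate unit. Because $F$ is countable, the spanning set of monomials $\{\alpha\beta^* : r(\alpha) = r(\beta) \in H\}$ is countable; enumerate it as $\{z_n\}_{n=1}^{\infty}$ and recursively apply the construction above to produce idempotents $f_n \in I_H^{\mathrm{alg}}$ that locally unitize the finite set $\{f_{n-1}\} \cup \{z_1, \ldots, z_n\}$. The resulting sequence satisfies $f_{n+1}f_n = f_n f_{n+1} = f_n$, and every element of $I_H^{\mathrm{alg}}$, being a finite linear combination of the $z_n$, is eventually absorbed by some $f_N$. Restricting the resulting approximate unit to the original subquotient completes the proof.

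The main obstacle is the construction of $\Delta$ in the presence of infinite emitters: at a vertex $v$ whose outgoing edges have ranges both inside and outside $H$, the naive procedure would yield infinitely many extensions. The remedy is to include, in place of $v$ itself, the finite modification $v - \sum_{s(e)=v,\, r(e) \notin H}ee^*$, which is precisely the type of element describing graded ideals via breaking-vertex pairs $(H,S)$. Condition~(K) is exactly what guarantees that every ideal of $L_K(E)$ admits such a combinatorial description, so the local-unit argument extends to the non-row-finite setting with this modification absorbed into the definition of $\Delta$.
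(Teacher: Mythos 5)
Your construction is a genuinely different route from the paper's: the paper simply quotes \cite[Corollary~6.3]{ermt:ideals-graph-algs} to see that the ideal $J$ is itself isomorphic to a Leavitt path algebra $L_K(F)$, takes the increasing sums of vertices of $F$ as the approximate unit, and pushes it forward to $J/I$; you instead build local units by hand from paths into $H$. For \emph{row-finite} graphs your argument is essentially correct: the set $\Delta$ of extensions truncated at a common length $N$ (or at sinks) is a finite antichain in the prefix order, the iterated relation $v=\sum_{s(e)=v}ee^*$ is available at every non-sink vertex, and the bookkeeping you describe does give $f\alpha\beta^*=\alpha\beta^*f=\alpha\beta^*$, after which the recursive enumeration of the countable spanning set works as you say.

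The gap is the non-row-finite case, which the lemma must cover (it is applied, e.g., to Leavitt path algebras of amplified graphs in Section~5, and these are full of infinite emitters). First, every vertex reached while extending a path whose range lies in $H$ stays in $H$, because $H$ is hereditary; so the only problematic vertices in your expansion are infinite emitters \emph{inside} $H$, where all outgoing edges land in $H$. At such a vertex the relation $v=\sum_{s(e)=v}ee^*$ simply does not hold, so you can neither push the expansion to the common length $N$ nor invoke the identity $r(\alpha)=\sum_\delta\delta\delta^*$; and if you stop early there, elements of $\Delta$ coming from different members of $\Gamma$ can become comparable, destroying the orthogonality that made $f$ an idempotent. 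Your proposed remedy of replacing $v$ by $v-\sum_{s(e)=v,\,r(e)\notin H}ee^*$ does not address this: for $v\in H$ that correction term is empty (no edge from $v$ leaves $H$), so the ``modification'' is just $v$ and the obstruction remains. Second, when the graph is not row-finite a subquotient is in general an ideal of the form $I_{(H,S)}$, whose spanning set also contains monomials $\alpha v^H\beta^*$ with $v$ a breaking vertex in $S$; your central construction never produces an idempotent absorbing these (one would need summands such as $\alpha v^H\alpha^*$ and a new orthogonality check against the $\gamma\gamma^*$ terms). Both points can be repaired, but they require genuine additional argument; the paper's citation of the structure theorem for ideals is precisely what lets it bypass this work for arbitrary countable graphs satisfying Condition~(K).
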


\begin{proof}
Any Leavitt path algebra has a countable approximate unit consisting of idempotents.  Let $I$ and $J$ be ideals of $L_{K} (E)$ such that $I \subseteq J$.  By \cite[Corollary 6.3]{ermt:ideals-graph-algs}, $J$ is isomorphic to $L_{K} (F)$ for some graph $F$.  Hence, $J$ has a countable approximate unit consisting of idempotents, $\{ e_{n} \}_{ n = 1}^{ \infty }$.  Then $\{ e_{n} + I \}_{ n = 1}^{ \infty }$ is a countable approximate unit consisting of idempotents for $J / I$.
\end{proof}

\begin{remar}
It follows from Lemma~\ref{l:excisionlocalunit} and Lemma~\ref{l:sqlocalunits} that if $E$ is a graph satisfying Condition~(K), then any subquotient of $L_{K} (E)$ satisfies excision in algebraic $K$-theory.  Thus, the definition of ideal-related algebraic $K$-theory applies to $L_K(E)$ whenever $E$ satisfies Condition~(K). 
\end{remar}

\begin{remar} \label{ideals-sat-hered-ident-rem}
If $E$ is a row-finite graph satisfying Condition~(K), then the lattice of ideals of the Leavitt path algebra $L_K(E)$ is isomorphic to the lattice $\mathcal{H}(E)$ of saturated hereditary subsets of $E$.  In this case we may identify any ideal in $L_K(E)$ with its corresponding saturated hereditary subset of $E^0$.  If we do this, then whenever $E_1$ and $E_2$ are row-finite graphs satisfying Condition~(K), an isomorphism between $\ikalg ( L_{K} ( E_1 ) )$ and $\ikalg ( L_{K} ( E_2 ) )$ consists of (i) a lattice isomorphism $\ftn{ \beta }{ \mathcal{H} ( E_{1} )  }{  \mathcal{H} ( E_{2} ) }$ together with (ii)  group isomorphisms
$$
\alpha^{H_{1},H_{2} }_n: \kalg_n(I_{H_{2}}^{\mathrm{alg} } / I_{ H_{1} }^{\mathrm{alg} } ) \to \kalg_n(I_{ \beta( H_{2} ) }^{\mathrm{alg} } / I_{ \beta( H_{1} ) }^{\mathrm{alg} } )\quad \text{for each $n \in \Z$}
$$
for each pair $H_{1}, H_{2} \in \mathcal{H} ( E_{1} )$ with $H_1 \subseteq H_2$, that also (iii)  preserves all natural transformations.  Moreover, this is an isomorphism from $\oikalg ( L_{K} ( E_1 ) )$ onto $\oikalg ( L_{K} ( E_2 ) )$ precisely when all the $\alpha^{H_{1},H_{2} }_0$ are also order isomorphisms.

Similarly, if $E$ is a row-finite graph satisfying Condition~(K), then the lattice of closed ideals of the graph $C^*$-algebra $C^*(E)$ is isomorphic to the lattice $\mathcal{H}(E)$ of saturated hereditary subsets of $E$.  In this case we may identify any ideal in $C^*(E)$ with its corresponding saturated hereditary subset of $E^0$.  If we do this, then whenever $E_1$ and $E_2$ are row-finite graphs satisfying Condition~(K), an isomorphism between $\iktop ( C^* ( E_1 ) )$ and $\iktop ( C^* ( E_2 ) )$ consists of (i) a lattice isomorphism $\ftn{ \beta }{ \mathcal{H} ( E_{1} )  }{  \mathcal{H} ( E_{2} ) }$ together with (ii)  group isomorphisms
$$
\alpha^{H_{1},H_{2} }_n: \ktop_n(I_{H_{2}}^{\mathrm{top} } / I_{ H_{1} }^{\mathrm{top} } ) \to \ktop_n(I_{ \beta( H_{2} ) }^{\mathrm{top} } / I_{ \beta( H_{1} ) }^{\mathrm{top} } )\quad \text{for each $n =0,1$}
$$
for each pair $H_{1}, H_{2} \in \mathcal{H} ( E_{1} )$ with $H_1 \subseteq H_2$, that also (iii)  preserves all natural transformations.  Moreover, this is an isomorphism from $\oiktop ( C^*( E_1 ) )$ onto $\oiktop ( C^*( E_2 ) )$ precisely when all the $\alpha^{H_{1},H_{2} }_0$ are also order isomorphisms.
\end{remar}

\section{The Morita Equivalence Conjecture for graph algebras} \label{mor-equiv-sec}

In this section we show that if two complex Leavitt path algebras of graphs satisfying Condition~(K) have isomorphic ideal-related algebraic $K$-theories, then the corresponding graph $C^*$-algebras have isomorphic ideal-related topological $K$-theories.  This allows us to prove that the Morita equivalence conjecture holds for any class of graphs whose $C^*$-algebras are classified by ideal-related topological $K$-theory.  By applying existing classification theorems, we confirm the Morita equivalence conjecture for a number of specific classes of graphs in the corollaries at the end of this section.

\begin{defin}
Let $M$ be a monoid.  A submonoid $S$ of $M$ is an \emph{order ideal of $M$} if $x+y \in S$ implies $x, y \in S$.  Let $S$ be an order-ideal of $M$.  Define a congruence $\sim_{S}$ on $M$ by setting $a \sim_{S} b$ if and only if there exist $x,y \in S$ such that $a + x = b + y$.  Let $M / S$ be the factor monoid obtained from the congruence.  
\end{defin}

\begin{theor}\cite[Proposition~1.4]{agop:exhangerings}\label{t:exchange}
If $R$ is an exchange ring and $I$ is an ideal of $R$, then $V( R/ I ) \cong V(R) / V(I)$.
\end{theor}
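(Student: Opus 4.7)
My plan is to construct the isomorphism via the canonical surjection $\pi : R \to R/I$. Since $\pi$ sends idempotents in $M_n(R)$ to idempotents in $M_n(R/I)$ and preserves Murray--von Neumann equivalence, it induces a monoid homomorphism $V(\pi) : V(R) \to V(R/I)$ that annihilates every class in $V(I)$. Thus $V(\pi)$ descends to a homomorphism $\overline{V(\pi)} : V(R)/V(I) \to V(R/I)$, and the task reduces to showing that $\overline{V(\pi)}$ is both surjective and injective.

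For surjectivity I would invoke Nicholson's characterization of exchange rings: $R$ is an exchange ring if and only if idempotents lift modulo every two-sided ideal. Since the exchange property is inherited by matrix rings and $M_n(R/I) \cong M_n(R)/M_n(I)$, every idempotent $\bar{e} \in M_n(R/I)$ lifts to an idempotent $e \in M_n(R)$, so $V(\pi)([e]) = [\bar{e}]$.

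For injectivity, suppose $V(\pi)([p]) = V(\pi)([q])$ with $p \in M_m(R)$ and $q \in M_n(R)$ idempotents. Then there exist matrices $\bar{x}, \bar{y}$ over $R/I$ with $\bar{x}\bar{y} = \bar{p}$ and $\bar{y}\bar{x} = \bar{q}$. Choose arbitrary lifts $x, y$ to matrices over $R$; the differences $xy - p$ and $yx - q$ then lie in matrix rings over $I$. The key move is to apply the exchange property inside a sufficiently large matrix ring $M_k(R)$ to perturb $x, y$ into elements implementing an honest Murray--von Neumann equivalence, at the cost of enlarging $p$ and $q$ by idempotents drawn from matrix rings over $I$. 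This produces idempotents $e, f$ over $I$ with $[p] + [e] = [q] + [f]$ in $V(R)$, which is precisely the relation $[p] \sim_{V(I)} [q]$ in the sense of the congruence introduced earlier in the section.

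The main obstacle is this last injectivity step: converting a Murray--von Neumann equivalence modulo $I$ into a genuine equivalence in $V(R)$ after absorbing classes from $V(I)$. This is exactly where the exchange hypothesis is indispensable, since in a general ring the error term $xy - p$ cannot be corrected to an idempotent via the exchange lemma, and one would fail to produce the compensating classes in $V(I)$ needed to witness the relation.
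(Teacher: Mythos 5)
First, note that the paper itself gives no proof of this statement: it is imported verbatim from \cite[Proposition~1.4]{agop:exhangerings}, so the benchmark is the argument in that source. Your setup (descending $V(\pi)$ to a map $V(R)/V(I)\to V(R/I)$) and your surjectivity step (Nicholson's idempotent-lifting characterization together with the fact that matrix rings over exchange rings are again exchange rings) are correct and are exactly how the cited proof begins. The gap is in injectivity: the sentence ``the key move is to apply the exchange property \dots to perturb $x,y$ into elements implementing an honest Murray--von Neumann equivalence, at the cost of enlarging $p$ and $q$'' is a restatement of what must be proved, not an argument. Knowing only that $xy-p$ and $yx-q$ lie in matrices over $I$ gives no mechanism for ``perturbing'' the lifts --- this is precisely the point where the claim fails for general rings --- and your proposal never says how the exchange hypothesis converts those error terms into compensating classes in $V(I)$. (Also, the actual mechanism does not enlarge $p$ and $q$; it shrinks them to equivalent subidempotents, and the enlargement appears only afterwards at the level of classes in $V(R)$.)

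Concretely, the missing step runs as follows, and is the substance of \cite[Proposition~1.4]{agop:exhangerings}. Replacing $\bar x,\bar y$ by $\bar p\bar x\bar q$ and $\bar q\bar y\bar p$, you may choose lifts $x\in pM_n(R)q$ and $y\in qM_n(R)p$, so that $a:=xy$ lies in the unital corner $S:=pM_n(R)p$, which is an exchange ring with identity $p$, and $p-a\in M_n(I)$. Nicholson's characterization applied in $S$ yields an idempotent $e=ac\in aS$ with $p-e\in (p-a)S\subseteq M_n(I)$. Setting $u:=ex$ and $v:=yce$, one checks $uv=e$ and $f:=vu=ycex$ is an idempotent with $f\le q$ and $e\sim f$; moreover $\bar e=\bar c=\bar p$ forces $\bar f=\bar y\bar x=\bar q$, so $q-f\in M_n(I)$. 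Since $e\le p$ and $f\le q$, the complements $p-e$ and $q-f$ are idempotent matrices over $I$, and therefore $[p]+[q-f]=[e]+[p-e]+[q-f]=[q]+[p-e]$ in $V(R)$ with $[p-e],[q-f]\in V(I)$, which is exactly the congruence $[p]\sim_{V(I)}[q]$ needed for injectivity. Without some version of this corner-and-Nicholson argument, your injectivity step remains an assertion rather than a proof.
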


\begin{lemma}\label{l:exchange}
Let $E$ be a row-finite graph satisfying Condition (K).  If $H \in \mathcal{H} ( E )$, and $\overline{E}_{(H, \emptyset) }$ denotes the graph of \cite[Definition~4.1]{ermt:ideals-graph-algs}, then there exists an algebra isomorphism $\phi : I_{H}^{\mathrm{alg} } \to L_{\C} ( \overline{E}_{(H, \emptyset) } )$ and a $*$-isomorphism $\overline{\phi} :  I_{H}^{\mathrm{top} } \to C^{*} ( \overline{E}_{(H, \emptyset) } )$ that make the diagram
\begin{equation*}
\xymatrix{
L_{\C} (\overline{E}_{(H, \emptyset) } ) \ar[r]^-{\phi } \ar[d]_-{ \iota_{ \overline{E}_{(H, \emptyset) } } } & I_{H}^{ \mathrm{alg} } \ar[d]^{ \iota_{E, H } } \\
C^{*} (\overline{E}_{(H, \emptyset) } ) \ar[r]^-{\overline{\phi}} & I_{H}^{ \mathrm{top} }
}
\end{equation*}
commute.  Consequently, we have the following:
\begin{itemize}
\item[(1)] For every $H \in \mathcal{H} ( E )$, the ideal $I_{H}^{\mathrm{alg}}$ is an exchange ring.

\item[(2)] For every $H_{1}, H_{2} \in \mathcal{H} ( E )$ with $H_{1} \subseteq H_{2}$, the natural map from $V( I_{H_{2} }^{\mathrm{alg}})$ to $V( I_{H_{2}}^{\mathrm{alg}} / I_{ H_{1} }^{\mathrm{alg}} )$ is surjective.
\end{itemize}
\end{lemma}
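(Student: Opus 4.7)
The plan is to produce the two isomorphisms simultaneously from the same explicit generating family, so that compatibility with the $\iota$-maps is automatic. The graph $\overline{E}_{(H,\emptyset)}$ is defined in \cite{ermt:ideals-graph-algs} precisely so that an explicit set inside $I_H^{\mathrm{alg}} \subseteq L_\C(E)$ is a Leavitt $\overline{E}_{(H,\emptyset)}$-family generating $I_H^{\mathrm{alg}}$. By the universal property of $L_\C(\overline{E}_{(H,\emptyset)})$ there is a surjection $L_\C(\overline{E}_{(H,\emptyset)}) \to I_H^{\mathrm{alg}}$, and the $\Z$-graded uniqueness theorem (available since $E$, and hence $\overline{E}_{(H,\emptyset)}$, is row-finite with no gauge-fixed issue) yields injectivity; let $\phi$ be its inverse.

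For $\overline\phi$, I would view $L_\C(E) \subseteq C^*(E)$ via $\iota_E$. The same distinguished family now sits inside $I_H^{\mathrm{top}}$ and satisfies the Cuntz--Krieger relations for $\overline{E}_{(H,\emptyset)}$, so universality gives a $*$-homomorphism $\psi : C^*(\overline{E}_{(H,\emptyset)}) \to I_H^{\mathrm{top}}$. Its image contains the distinguished family and is closed, hence contains the closure of $\iota_E(I_H^{\mathrm{alg}})$, which equals $I_H^{\mathrm{top}}$ by Theorem~\ref{t:densesubalg}(2), so $\psi$ is surjective. Injectivity of $\psi$ follows from the gauge-invariant uniqueness theorem, applied with the restriction of the canonical gauge action of $C^*(E)$ to the gauge-invariant ideal $I_H^{\mathrm{top}}$ (gauge-invariant because $H$ generates it as a set of projections). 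Setting $\overline\phi = \psi^{-1}$ and using that $\phi$ and $\overline\phi$ agree on the distinguished generating family by construction, the square commutes on generators and therefore on all of $I_H^{\mathrm{alg}}$.

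For the consequences, part (1) follows because any Leavitt path algebra over a field is an exchange ring (a standard fact, e.g.\ due to Abrams--Aranda Pino), so $L_\C(\overline{E}_{(H,\emptyset)}) \cong I_H^{\mathrm{alg}}$ is exchange. For part (2), (1) gives that $I_{H_2}^{\mathrm{alg}}$ is an exchange ring, and $I_{H_1}^{\mathrm{alg}}$ is an ideal of it, so Theorem~\ref{t:exchange} yields a monoid isomorphism
\begin{equation*}
V(I_{H_2}^{\mathrm{alg}}/I_{H_1}^{\mathrm{alg}}) \;\cong\; V(I_{H_2}^{\mathrm{alg}})/V(I_{H_1}^{\mathrm{alg}}).
\end{equation*}
The quotient map onto a factor monoid by a congruence is always surjective, and this quotient map is the one induced by the quotient ring homomorphism, so the natural map $V(I_{H_2}^{\mathrm{alg}}) \to V(I_{H_2}^{\mathrm{alg}}/I_{H_1}^{\mathrm{alg}})$ is surjective as claimed.

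The main obstacle is the bookkeeping needed to arrange $\phi$ and $\overline\phi$ to act identically on the same family of generators inside $L_\C(E) \subseteq C^*(E)$. The existence of each isomorphism individually is essentially in the literature, but neither reference by itself guarantees compatibility with the inclusions $\iota_E$ and $\iota_{\overline{E}_{(H,\emptyset)}}$. The crux is therefore to fix one canonical Leavitt family inside $I_H^{\mathrm{alg}}$ (as given by \cite{ermt:ideals-graph-algs}) and verify that its image in $I_H^{\mathrm{top}}$ is simultaneously a Cuntz--Krieger family generating $I_H^{\mathrm{top}}$; once this is done, the commutativity of the diagram is forced by the universal properties.
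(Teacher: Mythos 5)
Your overall strategy (realize a distinguished Leavitt/Cuntz--Krieger $\overline{E}_{(H,\emptyset)}$-family inside $I_H^{\mathrm{alg}}\subseteq I_H^{\mathrm{top}}$ and let the universal properties force commutativity of the square) is the right idea, and it is essentially how the cited results are obtained; the paper itself simply quotes the proofs of \cite[Theorem~5.1 and Theorem~6.1]{ermt:ideals-graph-algs} for the existence of $\phi$, $\overline{\phi}$ and the commutativity. However, your injectivity step has a genuine gap. In the graph $\overline{E}_{(H,\emptyset)}$ of \cite[Definition~4.1]{ermt:ideals-graph-algs} one adjoins a new vertex and a new edge for each path $\alpha$ of positive length that ends in $H$ but starts outside $H$, and the distinguished family sends the new vertex to $\alpha\alpha^{*}$ and the new edge to $\alpha$ itself, where $|\alpha|$ may exceed $1$. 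Consequently the surjection $L_{\C}(\overline{E}_{(H,\emptyset)})\to I_H^{\mathrm{alg}}$ is \emph{not} graded for the canonical $\Z$-gradings (a degree-one generator is sent to an element of degree $|\alpha|$), so the graded uniqueness theorem cannot be invoked as you state it; for the same reason $\psi$ does not intertwine the canonical gauge action of $C^{*}(\overline{E}_{(H,\emptyset)})$ with the restriction to $I_H^{\mathrm{top}}$ of the gauge action of $C^{*}(E)$ (the new edge is scaled by $z^{|\alpha|}$, not by $z$), so the gauge-invariant uniqueness theorem does not apply with the action you specify. The gap is repairable here precisely because of the standing hypothesis: $E$ satisfies Condition~(K), hence so does $\overline{E}_{(H,\emptyset)}$, hence it satisfies Condition~(L), and the Cuntz--Krieger uniqueness theorems (algebraic and $C^{*}$-analytic) yield injectivity once the vertex images are checked to be nonzero; alternatively one uses a suitably reweighted grading/circle action, which is in effect what the proofs in \cite{ermt:ideals-graph-algs} handle.

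There is also an error in your deduction of (1): it is false that every Leavitt path algebra over a field is an exchange ring --- the algebra of a single loop is $K[x,x^{-1}]$, which is not exchange. The correct statement, \cite[Theorem~5.8]{AbrPino3}, is that $L_{K}(F)$ is an exchange ring when $F$ satisfies Condition~(K), so you must note (as the paper does) that Condition~(K) for $E$ is inherited by $\overline{E}_{(H,\emptyset)}$, and only then does the isomorphism $I_H^{\mathrm{alg}}\cong L_{\C}(\overline{E}_{(H,\emptyset)})$ give (1). Your argument for (2), combining (1) with Theorem~\ref{t:exchange} and surjectivity of the quotient map onto the factor monoid, coincides with the paper's and is correct.
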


\begin{proof}
The existence of the isomorphisms and commutativity of the diagram follows from the proofs of  \cite[Theorem~5.1]{ermt:ideals-graph-algs} and \cite[Theorem~6.1]{ermt:ideals-graph-algs}.  Statement (1) follows from the fact that $E$ satisfies Condition~(K) implies $\overline{E}_{(H, \emptyset) }$ satisfies Condition~(K), and a graph satisfying Condition~(K) implies the associated Leavitt path algebra is an exchange ring \cite[Theorem~5.8]{AbrPino3}.  Statement (2) then follows from Theorem~\ref{t:exchange}.
\end{proof}

\begin{propo}\label{p:isomv}
Let $E$ be a row-finite graph.   Then for every $H_{1}, H_{2} \in \mathcal{H} ( E )$ with $H_{1} \subseteq H_{2}$, 
\begin{equation*}
\ftn{ V( \iota_{E, H_{2} / H_{1} } ) }{ V( I_{ H_{2} }^{\mathrm{alg} } / I_{H_{1}}^{\mathrm{alg} } ) } { V( I_{ H_{2} }^{\mathrm{top} } / I_{H_{1}}^{\mathrm{top} } ) } 
\end{equation*}
is an isomorphism.  Consequently, 
\begin{align*}
\ftn{ \gamma_{0, H_{2} / H_{1} }^{E} }{ \kalg_{0} ( I_{ H_{2} }^{\mathrm{alg} } / I_{H_{1}}^{\mathrm{alg} }   ) }{ \ktop_{0} (  I_{ H_{2} }^{\mathrm{top} } / I_{H_{1}}^{\mathrm{top} }  ) }
\end{align*}
is an order isomorphism.
\end{propo}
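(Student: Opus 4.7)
\noindent The plan is to prove that $V(\iota_{E,H_2/H_1})$ is an isomorphism by reducing to the following fundamental fact: for any row-finite graph $F$, the inclusion $\iota_F : L_\C(F) \hookrightarrow C^*(F)$ induces an isomorphism $V(L_\C(F)) \to V(C^*(F))$.  This holds because both monoids are canonically identified with the graph monoid $M_F$ --- presented by the generators $F^0$ subject to the Cuntz--Krieger/Leavitt relations --- via maps sending the class of a vertex $v$ on each side to $v \in M_F$, and these identifications intertwine $V(\iota_F)$ with the identity on $M_F$.

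\medskip

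\noindent The reduction proceeds in two stages.  When $H_1 = \emptyset$, Lemma~\ref{l:exchange} supplies compatible algebra and $*$-iso\-mor\-phisms identifying $I_{H_2}^{\mathrm{alg}}$ with $L_\C(F)$ and $I_{H_2}^{\mathrm{top}}$ with $C^*(F)$ for $F := \overline E_{(H_2,\emptyset)}$; the commutativity of the diagram stated in that lemma says precisely that these identifications intertwine $\iota_{E,H_2}$ with $\iota_F$, so the case $H_1=\emptyset$ follows immediately from the fundamental fact above.  For general $H_1 \subseteq H_2$, I would use the standard identifications $L_\C(E)/I_{H_1}^{\mathrm{alg}} \cong L_\C(E/H_1)$ and $C^*(E)/I_{H_1}^{\mathrm{top}} \cong C^*(E/H_1)$ from \cite{ermt:ideals-graph-algs} (each compatible with the corresponding inclusion map $\iota$) to transfer the situation to the graph $E/H_1$ with the hereditary set $H_2 \setminus H_1$ in the role of $H_2$, returning to the case $H_1=\emptyset$.

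\medskip

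\noindent For the final clause, recall that for rings and $C^*$-algebras with countable approximate units of idempotents both $\kalg_0$ and $\ktop_0$ are canonically the Grothendieck group of the respective $V$-monoid, with positive cone equal to the image of $V$; the analogous statement holds for $\textit{KH}_0$ as well.  Consequently $\gamma_{0,H_2/H_1}^E$ is identified with the Grothendieck group homomorphism induced by $V(\iota_{E,H_2/H_1})$, and once the latter is known to be a monoid isomorphism, $\gamma_{0,H_2/H_1}^E$ is automatically a group isomorphism that carries the positive cone bijectively onto the positive cone, i.e., an order isomorphism.

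\medskip

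\noindent The main obstacle I anticipate is not conceptual but clerical: one must keep the identifications in the general case simultaneously compatible with inclusions, quotients, and the maps $\iota$ on both the algebraic and $C^*$-sides.  The functoriality already built into the constructions of \cite{ermt:ideals-graph-algs} --- together with the fact that the canonical generating vertex idempotents on all sides are preserved by every identification in sight --- should make this bookkeeping routine.
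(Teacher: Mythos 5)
Your proposal is correct, but it takes a genuinely different route from the paper for the main step. The paper proves the ideal case ($H_1=\emptyset$) directly: it decomposes an arbitrary class $[p]\in V(I_{H_2}^{\mathrm{top}})$ into vertex classes using the proof of \cite[Theorem~7.1]{amp:nonstablekthy} (giving surjectivity) and gets injectivity from that theorem; it then passes to subquotients by invoking the exchange-ring property through Lemma~\ref{l:exchange} and Theorem~\ref{t:exchange}, so that $V(I_{H_2}^{\bullet}/I_{H_1}^{\bullet})\cong V(I_{H_2}^{\bullet})/V(I_{H_1}^{\bullet})$ on both sides, finishing with a lifting/diagram chase that uses the already-established isomorphisms $V(\iota_{E,H_1})$ and $V(\iota_{E,H_2})$. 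You instead reduce the subquotient case to the ideal case via the quotient-graph identifications $L_\C(E)/I_{H_1}^{\mathrm{alg}}\cong L_\C(E/H_1)$ and $C^*(E)/I_{H_1}^{\mathrm{top}}\cong C^*(E/H_1)$ (under which $I_{H_2}^{\bullet}/I_{H_1}^{\bullet}$ corresponds to the ideal of $H_2\setminus H_1$, which is indeed saturated hereditary in $E/H_1$ for row-finite $E$), and you handle the ideal case through Lemma~\ref{l:exchange} plus the graph-monoid identification $V(L_\C(F))\cong M_F\cong V(C^*(F))$. Your route buys a uniform reduction that avoids the exchange-ring machinery in the subquotient step (which, as the paper uses it, silently imports Condition~(K) into a proposition stated for all row-finite graphs), at the cost of importing the quotient identifications and the bookkeeping that they intertwine the comparison maps; note also that Lemma~\ref{l:exchange} as stated carries a Condition~(K) hypothesis, so for full generality you would want to observe that the underlying identification of $I_H^{\mathrm{alg}}$ and $I_H^{\mathrm{top}}$ with graph algebras of $\overline{E}_{(H,\emptyset)}$ in \cite{ermt:ideals-graph-algs} needs only that $H$ be saturated hereditary, whereas the paper's direct argument for the ideal case avoids that lemma altogether. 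The final clause is treated essentially identically in both; your aside about $\textit{KH}_0$ is not needed --- all that is required is that each map in the composition defining $\gamma^E_{0,H_2/H_1}$ carries idempotent classes to the corresponding projection classes, so that $\gamma^E_{0,H_2/H_1}$ is the Grothendieck-group map induced by $V(\iota_{E,H_2/H_1})$.
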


\begin{proof}
We first show the proposition is true for the case that $H_{1} = \emptyset$.  A typical element of $V( I_{H_{2}}^{ \mathrm{top} } )$ has the form $[p]$ for a projection $p \in \mathsf{M}_{n} ( I_{H_{2}}^{ \mathrm{top} } )$.  By the proof of \cite[Theorem~7.1]{amp:nonstablekthy}, $$[p] = [p_{v_{1}}] + [p_{v_{2} }] + \cdots + [p_{v_{n}}]$$ where $v_{i} \in E^{0}$.  Since $p \in \mathsf{M}_{n} ( I_{H_{2}}^{ \mathrm{top} } )$, and $V(I_{H_2}^{ \mathrm{top}})$ is an order ideal, $[p_{v_i}] \in V(I_{H_2}^{ \mathrm{top}})$ for all $1 \leq i \leq n$, and since $I_{H_2}^{ \mathrm{top}}$ is an ideal, $p_{v_i} \in I_{H_2}^{ \mathrm{top}}$ for all $1 \leq i \leq n$.  Thus, $v_{i} \in H_{2}$  for all $1 \leq i \leq n$.  Hence, $[ v_{1} ] + [ v_{2} ] + \dots + [ v_{n} ] \in V( I_{H_{2}}^{ \mathrm{alg} } )$ and 
\begin{equation*}
V( \iota_{E} )( [ v_{1} ] + [ v_{2} ] + \dots + [ v_{n} ] ) = [ p ].
\end{equation*}
Therefore, $V( \iota_{E, H_{2}} )$ is surjective.  

Suppose $x, y \in V( I_{H_{2}}^{\mathrm{alg}} )$ such that $V( \iota_{E} )( x ) = V( \iota_{E} )( y )$.  By \cite[Theorem~7.1]{amp:nonstablekthy}, $x = y$ in $V( L_{\C} ( E ) )$.  Hence, $x = y$ in $V( I_{H_{2}}^{\mathrm{alg}} )$.  Therefore, $V( \iota_{E, H_{2}} )$ is injective.  

We now deal with the general case.  Note that the diagram
\begin{equation*}
\xymatrix{
V( I_{H_{2}}^{\mathrm{alg} } ) \ar[r] \ar[d]_{ V( \iota_{E, H_{2} } ) }  & V( I_{H_{2} }^{\mathrm{alg} } / I_{ H_{1} }^{\mathrm{alg} } ) \ar[d]^{ V( \iota_{E, H_{2} / H_{2} } ) } \\
V( I_{H_{2} }^{ \mathrm{top} } ) \ar[r] & V( I_{H_{2} }^{\mathrm{top} } / I_{ H_{1} }^{\mathrm{top} } ) 
}
\end{equation*}
is commutative.  By Lemma~\ref{l:exchange} and Theorem~\ref{t:exchange}, the horizontal maps are surjective, so that
\begin{align*}
V( I_{H_{2} }^{\mathrm{alg} } / I_{ H_{1} }^{\mathrm{alg} } )  \cong V( I_{H_{2} }^{\mathrm{alg} } ) / V( I_{ H_{1} }^{\mathrm{alg} } ) \quad \text{ and } \quad V( I_{H_{2} }^{\mathrm{top} } / I_{ H_{1} }^{\mathrm{top} } )  \cong V( I_{H_{2} }^{\mathrm{top} } ) / V( I_{ H_{1} }^{\mathrm{top} } ).
\end{align*}
Therefore, $V( \iota_{E, H_{2} / H_{2} } )$ is surjective.  Let $a, b \in V( I_{H_{2}}^{\mathrm{alg} } )$ and let $\overline{a}, \overline{b} \in V( I_{H_{2} }^{\mathrm{alg} } / I_{ H_{1} }^{\mathrm{alg} } )$ be the image of $a$ and $b$ in $V( I_{H_{2} }^{\mathrm{alg} } / I_{ H_{1} }^{\mathrm{alg} } )$, respectively.  Suppose $V( \iota_{E, H_{2} / H_{2} } ) ( \overline{a} ) = V( \iota_{E, H_{2} / H_{2} } ) ( \overline{b} )$.  Then there exist $x, y \in V( I_{ H_{1} }^{\mathrm{top} } )$ such that
\begin{align*}
 V( \iota_{E, H_{2} } ) (a) + x =  V( \iota_{E, H_{2} } ) (b)+ y
\end{align*}
in $V( I_{H_{2} }^{ \mathrm{top} } )$.  Since $V( \iota_{E, H_{1} } )$ is an isomorphism, there exist $c, d \in  V( I_{ H_{1} }^{\mathrm{alg} } )$ such that $V( \iota_{ E , H_{1} }  ) ( c ) = x$ and $V( \iota_{ E , H_{1} }  ) ( d ) = y$.  Since $V( \iota_{ E , H_{2} }  )$ is an isomorphism, we have that $a + c = b + d$.  Hence, $\overline{a} = \overline{b}$.  Therefore, $V( \iota_{E, H_{2} / H_{2} } )$ is injective.

The last statement follows from the fact that $\kalg_{0} ( I_{ H_{2} }^{\mathrm{alg} } / I_{H_{1}}^{\mathrm{alg} } )$ is the enveloping group of $V( I_{ H_{2} }^{\mathrm{alg} } / I_{H_{1}}^{\mathrm{alg} } )$ and $K_{0}( I_{ H_{2} }^{\mathrm{top} } / I_{H_{1}}^{\mathrm{top} } ) = \ktop_{0}(I_{ H_{2} }^{\mathrm{top} } / I_{H_{1}}^{\mathrm{top} } )$ is the enveloping group of $V(I_{ H_{2} }^{\mathrm{top} } / I_{H_{1}}^{\mathrm{top} })$.
\end{proof}

The following theorem is a result of Ara, Brustenga, and Cortinas in \cite{abc:kthyleavitt}.

\begin{theor}\cite[Theorem~7.6 and Theorem~9.1]{abc:kthyleavitt} \label{t:abskthy}
Let $E$ be a row-finite graph and let $\mathsf{A}_{E}$ be the vertex matrix of $E$.  Let $\mathsf{B}_{E}$ be (rectangular) submatrix obtained from $\mathsf{A}_{E}$ by removing the rows corresponding to the sinks of $E$. Then 
\begin{align*}
\kalg ( L_{\C} (E) ) \cong \mathrm{hocofiber} ( \kalg( \C )^{ E^{0 } \setminus \mathrm{Sink} ( E ) } \overset{ 1 - \mathsf{B}_{E}^{t} }{ \longrightarrow }  \kalg( \C )^{E^0} ) \\
\ktop( C^{*}(E) ) \cong \mathrm{hocofiber} ( \ktop( \C )^{ E^{0 } \setminus \mathrm{Sink} ( E ) } \overset{ 1 - \mathsf{B}_{E}^{t} }{ \longrightarrow } \ktop( \C )^{E^0}  ) 
\end{align*}
Consequently, the following chain complexes 
\begin{align*}
\scalebox{.77}{
\xymatrix{
\kalg_{n} ( \C )^{ E^{0} \setminus \mathrm{Sink}(E) } \ar[r]^-{1- \mathsf{B}_{E}^{t} } & \kalg_{n} ( \C )^{ E^{0} } \ar[r] & \kalg_{n} ( L_{\C} ( E ) ) \ar[r] & \kalg_{n-1} ( \C )^{ E^{0} \setminus \mathrm{Sink} ( E ) }  \ar[r]^-{1- \mathsf{B}_{E}^{t} }  & \kalg_{n-1} ( \C )^{ E^{0} } 
}
}
\end{align*}
and
\begin{align*}
\scalebox{.76}{
\xymatrix{
\ktop_{n} ( \C )^{ E^{0} \setminus \mathrm{Sink}(E) } \ar[r]^-{1- \mathsf{B}_{E}^{t} } & \ktop_{n}( \C )^{ E^{0} } \ar[r] & \ktop_{n}( C^{*} ( E ) ) \ar[r] & \ktop_{n-1}( \C )^{ E^{0} \setminus \mathrm{Sink} ( E ) }  \ar[r]^-{1- \mathsf{B}_{E}^{t} }  & \ktop_{n-1} ( \C )^{ E^{0} }
}
}
\end{align*}
are exact for all $n \in \Z$.
\end{theor}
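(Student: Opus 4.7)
The plan is to follow the Toeplitz--Cohn extension strategy. Let $C_\C(E)$ denote the Cohn path algebra, defined by the same generators and relations as $L_\C(E)$ except that relation (4) (the CK3-relation) is dropped. There is a canonical surjection $\pi_E : C_\C(E) \twoheadrightarrow L_\C(E)$, and the parallel construction in the $C^*$-setting produces the Toeplitz graph algebra $\mathcal{T}C^*(E)$ with surjection onto $C^*(E)$. In both settings the kernel $J_E$ is generated by the gap elements $q_v := v - \sum_{e \in s_E^{-1}(\{v\})} ee^*$ for $v \in E^0 \setminus \mathrm{Sink}(E)$, which form a family of mutually orthogonal idempotents indexed by the non-sink vertices. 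Both $C_\C(E)$ and $J_E$ have countable approximate units of idempotents, so by Lemma~\ref{l:excisionlocalunit} excision in algebraic $K$-theory holds for every term in the extension $0 \to J_E \to C_\C(E) \to L_\C(E) \to 0$, producing a cofiber sequence of nonconnective algebraic $K$-theory spectra; the $C^*$-algebraic analogue holds by the standard excision property of topological $K$-theory.

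Next I would compute the $K$-theory of the two outer terms. The Cohn algebra $C_\C(E)$ can be identified with the path algebra (in the graded, relations-free sense) of an auxiliary graph whose vertex idempotents are still $\{v\}_{v \in E^0}$ and in which no CK3-collapsing occurs; a direct Morita reduction via these vertex idempotents gives $C_\C(E) \sim \bigoplus_{v \in E^0} \C$ and hence $\kalg_*(C_\C(E)) \cong \kalg_*(\C)^{E^0}$, and similarly $\ktop_*(\mathcal{T}C^*(E)) \cong \ktop_*(\C)^{E^0}$. For the ideal, the family $\{q_v\}_{v \in E^0 \setminus \mathrm{Sink}(E)}$ consists of pairwise orthogonal idempotents and each corner $q_v J_E q_v$ is isomorphic to $\C$, giving a Morita equivalence $J_E \sim \bigoplus_{v \in E^0 \setminus \mathrm{Sink}(E)} \C$, and therefore $\kalg_*(J_E) \cong \kalg_*(\C)^{E^0 \setminus \mathrm{Sink}(E)}$ and likewise topologically.

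The heart of the proof is identifying the map induced by the inclusion $J_E \hookrightarrow C_\C(E)$ with $1 - \mathsf{B}_E^t$. Under the Morita equivalences above, the generator at $v \in E^0 \setminus \mathrm{Sink}(E)$ lifts at the $K_0$-level to $[q_v] = [v] - \sum_{e \in s_E^{-1}(\{v\})}[ee^*]$ in $\kalg_0(C_\C(E)) \cong \Z^{E^0}$. Because $e$ is a partial isometry with $e^*e = r_E(e)$ and $ee^*$ an idempotent in the corner of $s_E(e)$, the standard idempotent-equivalence argument gives $[ee^*] = [r_E(e)]$, so the image of $[q_v]$ is $[v] - \sum_{e \in s_E^{-1}(\{v\})}[r_E(e)]$. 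The coefficient on $[w]$ equals $\delta_{v,w} - \#\{e \in E^1 : s_E(e) = v,\, r_E(e) = w\}$, which is precisely the $(w,v)$-entry of $1 - \mathsf{B}_E^t$. With the map-level identification in place, the hocofiber descriptions of $\kalg(L_\C(E))$ and $\ktop(C^*(E))$ are read off from the cofiber sequence associated to the extension above, and the two long exact sequences of the theorem follow by applying $\pi_n(-)$ termwise to that cofiber sequence.

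The main obstacle is upgrading the $K_0$-level calculation to a spectrum-level identification of the inclusion with $1 - \mathsf{B}_E^t$ on the nose (not just up to $K_0$), and doing so coherently across all $n \in \Z$. This requires choosing the Morita representatives compatibly on both sides of the inclusion and then invoking the functoriality of nonconnective algebraic $K$-theory (via a Bass-style delooping or an appropriate mapping telescope argument) to transport the $K_0$ matrix identification to higher $K$-groups. A secondary technical point is that when $E^0$ is infinite, the identifications $\kalg_*(C_\C(E)) \cong \kalg_*(\C)^{E^0}$ involve the infinite direct sum, so one must verify that the Morita equivalence is compatible with the direct-sum decomposition and that the infinite matrix $1 - \mathsf{B}_E^t$ acts by a well-defined map of spectra (the row-finite hypothesis on $E$ is precisely what is needed here).
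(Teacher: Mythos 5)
First, a point of comparison: the paper does not prove this statement at all --- it is imported from Ara--Brustenga--Corti\~nas \cite{abc:kthyleavitt} (Theorems~7.6 and~9.1), whose argument proceeds by different means (universal localization of the path algebra on the algebraic side, rather than the Cohn/Toeplitz extension you propose). So your proposal has to stand on its own, and the Cohn--Toeplitz extension is indeed a legitimate known route: the extension $0 \to J_E \to C_\C(E) \to L_\C(E) \to 0$, excision via Lemma~\ref{l:excisionlocalunit}, the identification $J_E \cong \bigoplus_{v \in E^0 \setminus \mathrm{Sink}(E)} \mathsf{M}_{P(v)}(\C)$ (with $P(v)$ the finite paths ending at $v$), and the computation that the composite sends the generator at $v$ to $[v] - \sum_{s_E(e)=v}[r_E(e)]$, i.e.\ the column of $1-\mathsf{B}_E^t$, are all correct. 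Even your ``main obstacle'' is the routine part: since $v = q_v + \sum_{s_E(e)=v} ee^*$ is a finite orthogonal decomposition of idempotents (row-finiteness), additivity of $K_n$ on orthogonal sums of idempotent-induced maps together with the equivalence $ee^* \sim r_E(e)$ gives the matrix identification on every $K_n$, not just $K_0$.

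The genuine gap is the computation of the $K$-theory of the two outer terms, which you justify by the claim that the vertex idempotents implement Morita equivalences $C_\C(E) \sim \bigoplus_{v\in E^0}\C$ and $\mathcal{T}C^*(E) \sim c_0(E^0)$. This is false. Take $E$ to be the graph with one vertex and one loop: then $v$ is the unit, so the ``corner reduction'' returns the whole algebra; $C_\C(E)$ is the Jacobson/algebraic Toeplitz algebra $\C\langle x,y\rangle/(yx-1)$, which contains the proper nonzero ideal $\mathsf{M}_\infty(\C)$ with quotient $\C[t,t^{-1}]$, and $\mathcal{T}C^*(E)$ is the classical Toeplitz $C^*$-algebra, with ideal $\K$ and quotient $C(\mathbb{T})$; neither is Morita equivalent to the simple algebra $\C$, since Morita equivalence preserves ideal lattices. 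What your argument actually requires is the weaker but substantive statement that the inclusion of the diagonal $\bigoplus_{v\in E^0}\C v \hookrightarrow C_\C(E)$ induces an equivalence of (nonconnective) algebraic $K$-theory spectra, and, on the analytic side, that the Toeplitz graph $C^*$-algebra is $KK$-equivalent to $c_0(E^0)$ (Pimsner's theorem, or the graph-algebra computations of Raeburn--Szyma\'nski). These equivalences are precisely the nontrivial content of the whole theorem --- the known algebraic proofs are of comparable depth to \cite{abc:kthyleavitt} itself (e.g.\ via identifying the Cohn algebra with a Leavitt path algebra of an enlarged graph, which would be circular here, or via localization/homotopy-invariance arguments) --- so replacing them by an incorrect Morita claim leaves the proof without its engine. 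The kernel computation also deserves an explicit argument or citation, though that part is standard.
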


\begin{lemma}\label{l:exactalgtop}
Let $E$ be a graph and let $H_{1}$, $H_{2}$, and $H_{3}$ be elements of $\mathcal{H} ( E )$ such that $H_{1} \subseteq H_{2} \subseteq H_{3}$.  Then the diagrams
\begin{equation*}
\scalebox{.88}{
\xymatrix{
\kalg_{1} ( I_{H_{2}}^{ \mathrm{alg} }/I_{H_{1}}^{ \mathrm{alg} } ) \ar[r] \ar[d]_{ \gamma_{1, H_{2} / H_{1} }^{E} } & \kalg_{1} ( I_{H_{3}}^{ \mathrm{alg} } / I_{H_{1}}^{ \mathrm{alg} } ) \ar[r] \ar[d]_{ \gamma_{1, H_{3} / H_{1} }^{E} } & \kalg_{1} ( I_{H_{3}}^{ \mathrm{alg} } / I_{H_{2}}^{\mathrm{alg} } ) \ar[r] \ar[d]_{ \gamma_{1, H_{3} / H_{2} }^{E} } & \kalg_{0} ( I_{H_{2}}^{ \mathrm{alg} } /  I_{H_{1}}^{ \mathrm{alg}  })  \ar[d]_{ \gamma_{0, H_{2} / H_{1} }^{E} }   \\
\ktop_{1} ( I_{H_{2}}^{ \mathrm{top} }/I_{H_{1}}^{ \mathrm{top} } ) \ar[r] & \ktop_{1} ( I_{H_{3}}^{ \mathrm{top} }/I_{H_{1}}^{ \mathrm{top} }) \ar[r] & \ktop_{1} ( I_{H_{3}}^{ \mathrm{top} } / I_{H_{2}}^{\mathrm{top} } ) \ar[r] & \ktop_{0} ( I_{H_{2}}^{ \mathrm{top} }/I_{H_{1}}^{ \mathrm{top} }) }
}
\end{equation*}
and
\begin{equation*}
\xymatrix{
\kalg_{0} ( I_{H_{2}}^{ \mathrm{alg} } /  I_{H_{1}}^{ \mathrm{alg}  }) \ar[r] \ar[d]_{ \gamma_{0, H_{2} / H_{1} }^{E} }  & \kalg_{0} ( I_{H_{3}}^{ \mathrm{alg} } / I_{H_{1}}^{ \mathrm{alg} }) \ar[r] \ar[d]_{ \gamma_{0, H_{3} / H_{1} }^{E} }  & \kalg_{0} ( I_{H_{3}}^{ \mathrm{alg} } / I_{H_{2}}^{\mathrm{alg} } ) \ar[d]_{ \gamma_{0, H_{3} / H_{2} }^{E} }  \\
\ktop_{0} ( I_{H_{2}}^{ \mathrm{top} }/I_{H_{1}}^{ \mathrm{top} }) \ar[r] & \ktop_{0}( I_{H_{3}}^{ \mathrm{top} }/I_{H_{1}}^{ \mathrm{top} }) \ar[r] & \ktop_{0} ( I_{H_{3}}^{ \mathrm{top} } / I_{H_{2}}^{\mathrm{top} } )
}
\end{equation*}
are commutative.
\end{lemma}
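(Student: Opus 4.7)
The plan is to decompose each vertical comparison map $\gamma^E_{n,H_j/H_i}$ into its three defining stages and prove commutativity stage by stage using naturality. Recall that by definition
\begin{equation*}
\gamma^E_{n,H_j/H_i}\colon \kalg_n(I_{H_j}^{\mathrm{alg}}/I_{H_i}^{\mathrm{alg}}) \xrightarrow{\; (\iota_{E,H_j/H_i})_* \;} \kalg_n(I_{H_j}^{\mathrm{top}}/I_{H_i}^{\mathrm{top}}) \to \textit{KH}_n(I_{H_j}^{\mathrm{top}}/I_{H_i}^{\mathrm{top}}) \to \ktop_n(I_{H_j}^{\mathrm{top}}/I_{H_i}^{\mathrm{top}}),
\end{equation*}
and that the commutative square of short exact sequences appearing immediately after Definition~\ref{t:densesubalg} supplies a morphism of short exact sequences from the algebraic sequence $0 \to I_{H_2}^{\mathrm{alg}}/I_{H_1}^{\mathrm{alg}} \to I_{H_3}^{\mathrm{alg}}/I_{H_1}^{\mathrm{alg}} \to I_{H_3}^{\mathrm{alg}}/I_{H_2}^{\mathrm{alg}} \to 0$ to its topological counterpart, with vertical maps given by the inclusions $\iota_{E,\cdot/\cdot}$.

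First I would handle the squares not involving the boundary map (this includes all three squares of the second diagram and the first two squares of the first diagram). Here commutativity is immediate from functoriality: the horizontal maps are induced by inclusions and projections that are intertwined by the vertical $\iota_{E,\cdot/\cdot}$, so applying $\kalg_n$ and then successively the natural transformations $\kalg \to \textit{KH}$ and $\textit{KH} \to \ktop$ yields a commutative rectangle at each stage. The second diagram is then finished.

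For the boundary square of the first diagram, I would invoke the naturality of the connecting homomorphism in each of the three theories. By Lemma~\ref{l:excisionlocalunit} and Lemma~\ref{l:sqlocalunits}, each algebraic subquotient $I_{H_j}^{\mathrm{alg}}/I_{H_i}^{\mathrm{alg}}$ satisfies excision, and each closed $C^*$-subquotient $I_{H_j}^{\mathrm{top}}/I_{H_i}^{\mathrm{top}}$ also has an approximate unit and admits a long exact sequence in both $\textit{KH}$ and $\ktop$. Naturality of the connecting map $\partial$ in algebraic $K$-theory applied to the morphism of short exact sequences above produces the first commutative square
\begin{equation*}
\xymatrix{
\kalg_1(I_{H_3}^{\mathrm{alg}}/I_{H_2}^{\mathrm{alg}}) \ar[r]^{\partial} \ar[d]_{(\iota_*)} & \kalg_0(I_{H_2}^{\mathrm{alg}}/I_{H_1}^{\mathrm{alg}}) \ar[d]^{(\iota_*)} \\
\kalg_1(I_{H_3}^{\mathrm{top}}/I_{H_2}^{\mathrm{top}}) \ar[r]^{\partial} & \kalg_0(I_{H_2}^{\mathrm{top}}/I_{H_1}^{\mathrm{top}}).
}
\end{equation*}
Since Weibel's comparison $\kalg \to \textit{KH}$ and the $C^*$-comparison $\textit{KH} \to \ktop$ are natural transformations between functors that carry short exact sequences to long exact sequences, each of them commutes with the boundary maps on the topological short exact sequence, yielding two further commutative squares. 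Composing the three squares gives the desired boundary square.

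The only real subtlety is confirming that the boundary maps of $\textit{KH}$ and $\ktop$ on the topological short exact sequence are intertwined by the natural transformations defining $\gamma$; this is the standard compatibility of natural transformations with the connecting morphism in a long exact sequence arising from a functorial mapping cone construction, and is inherited from the constructions of Weibel (for $\textit{KH}$) and the forgetful comparison from $\textit{KH}$ to $\ktop$ for $C^*$-algebras. Once this is noted, the proof is essentially a diagram chase of three stacked naturality rectangles.
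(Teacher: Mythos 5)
Your argument is correct and follows essentially the same route as the paper: the paper's proof is simply a one-line citation of the comparison theorems in \cite[Theorem~2.4.1 and Theorem~3.1.9]{gc:algkthy} together with Remark~\ref{r:cmap}, which supply exactly the compatibility of the three-stage map $\kalg_n \to \textit{KH}_n \to \ktop_n$ with the connecting homomorphisms that you outline via stacked naturality squares. The one step you flag as a subtlety and assert as ``standard'' (that the transformations $\kalg \to \textit{KH} \to \ktop$ intertwine the boundary maps) is precisely what those citations, plus the degree-one identification in Remark~\ref{r:cmap}, are invoked to justify, so your write-up is a fleshed-out version of the paper's proof rather than a different one.
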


\begin{proof}
This follows from \cite[Theorem~2.4.1]{gc:algkthy} and \cite[Theorem~3.1.9]{gc:algkthy}, and Remark~\ref{r:cmap}.
\end{proof}

\begin{lemma}\label{l:K1}
Let $E$ be a row-finite graph.  For all $H_{1}, H_{2}$ in $\mathcal{H} ( E )$ with $H_{1} \subseteq H_{2}$, we have that $\gamma_{1, H_{2} / H_{1} }^{E}$ is surjective and $\ker( \gamma_{1, H_{2} / H_{1} }^{E} )$ is a divisible group.  
\end{lemma}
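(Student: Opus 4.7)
The plan is to reduce to the absolute case $H_1 = \emptyset$, $H_2 = E^0$, and then apply the Ara--Brustenga--Cortinas long exact sequence (Theorem~\ref{t:abskthy}) together with the divisibility of $\C^\times$.

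The first step is to identify, for $H_1 \subseteq H_2$ in $\mathcal{H}(E)$, a row-finite graph $F$ and compatible isomorphisms $I_{H_2}^{\mathrm{alg}}/I_{H_1}^{\mathrm{alg}} \cong L_\C(F)$ and $I_{H_2}^{\mathrm{top}}/I_{H_1}^{\mathrm{top}} \cong C^*(F)$ intertwining $\iota_{E, H_2/H_1}$ with $\iota_F$. For row-finite $E$, the quotient $L_\C(E)/I_{H_1}^{\mathrm{alg}}$ is itself the Leavitt path algebra of a row-finite graph, and Lemma~\ref{l:exchange} applied inside this quotient realizes the image of $I_{H_2}^{\mathrm{alg}}$ as $L_\C(F)$ for an appropriate $F$. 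Under these identifications the comparison map $\gamma_{1, H_2/H_1}^E$ is transported to $\gamma_1^F$, so it suffices to prove the statement for a row-finite graph $F$ with $H_1 = \emptyset$ and $H_2 = F^0$.

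The second step is to apply Theorem~\ref{t:abskthy} to $F$. Since $\kalg_0(\C) = \ktop_0(\C) = \Z$ with comparison map the identity, $\kalg_1(\C) = \C^\times$, and $\ktop_1(\C) = 0$, the two long exact sequences together with the natural transformation $\kalg_*(\C) \to \ktop_*(\C)$ fit into a commutative diagram with exact rows:
\begin{equation*}
\xymatrix@C=8pt{
(\C^\times)^{F^0 \setminus \mathrm{Sink}(F)} \ar[r] \ar[d] & (\C^\times)^{F^0} \ar[r] \ar[d] & \kalg_1(L_\C(F)) \ar[r] \ar[d]_{\gamma_1^F} & \Z^{F^0 \setminus \mathrm{Sink}(F)} \ar[r] \ar@{=}[d] & \Z^{F^0} \ar@{=}[d] \\
0 \ar[r] & 0 \ar[r] & \ktop_1(C^*(F)) \ar[r] & \Z^{F^0 \setminus \mathrm{Sink}(F)} \ar[r] & \Z^{F^0}
}
\end{equation*}
The bottom row identifies $\ktop_1(C^*(F))$ with $\ker(1-\mathsf{B}_F^t)$, and a short diagram chase identifies $\gamma_1^F$ with the surjective connecting map $\kalg_1(L_\C(F)) \twoheadrightarrow \ker(1-\mathsf{B}_F^t)$ coming from the top row. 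Exactness of the top row then yields
\begin{equation*}
\ker(\gamma_1^F) \cong (\C^\times)^{F^0}/(1-\mathsf{B}_F^t)\bigl((\C^\times)^{F^0 \setminus \mathrm{Sink}(F)}\bigr).
\end{equation*}
Since every nonzero complex number has an $n$th root for every $n \geq 1$, the group $\C^\times$ is divisible; any product of copies of $\C^\times$ is divisible, and quotients of divisible groups are divisible. Hence $\ker(\gamma_1^F)$ is divisible, as required.

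The principal obstacle I foresee is the naturality statement used in the second step: one must know that $\gamma_1^F$ agrees with the map of homotopy cofibers induced by the natural transformation $\kalg_*(\C) \to \ktop_*(\C)$ when Theorem~\ref{t:abskthy} is applied. This should follow from the functoriality of the Ara--Brustenga--Cortinas construction together with naturality of the composition $\kalg \to \textit{KH} \to \ktop$. A secondary issue is tracking, in the reduction step, the identification of subquotients as Leavitt path algebras of row-finite graphs compatibly with the passage to $C^*$-algebras, so that $\gamma_{1, H_2/H_1}^E$ really is carried to $\gamma_1^F$.
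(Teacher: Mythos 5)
Your argument is correct, and its core --- Theorem~\ref{t:abskthy}, the divisibility of $\kalg_1(\C)\cong\C^\times$, and a diagram chase identifying $\ker(\gamma_1^F)$ with the cokernel of $1-\mathsf{B}_F^t$ on $(\C^\times)^{F^0\setminus\mathrm{Sink}(F)}$ --- is exactly the paper's argument for the absolute case $H_1=\emptyset$, $H_2=E^0$. Where you genuinely diverge is in passing from the absolute case to general subquotients: you realize $I_{H_2}^{\mathrm{alg}}/I_{H_1}^{\mathrm{alg}}$, compatibly with the $C^*$-picture, as $L_\C(F)$ for a row-finite graph $F$ (quotient graph $E\setminus H_1$ first, then the ideal-structure result), so that $\gamma_{1,H_2/H_1}^E$ is transported to $\gamma_1^F$. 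The paper instead uses the structural realization only for ideals (Lemma~\ref{l:exchange}, i.e.\ the case $H_1=\emptyset$) and treats general subquotients homologically: it compares the algebraic and topological long exact sequences via Lemma~\ref{l:exactalgtop}, invokes Proposition~\ref{p:isomv} to know the $\gamma_0$'s are isomorphisms, and chases to show that $\ker(\gamma_{1,H_2}^E)$ maps onto $\ker(\gamma_{1,H_2/H_1}^E)$, whence divisibility passes to the quotient. Your route avoids any input about $\gamma_0$, but it carries a heavier structural burden: the paper nowhere records the compatibility of the quotient identifications $L_\C(E)/I_{H_1}^{\mathrm{alg}}\cong L_\C(E\setminus H_1)$ and $C^*(E)/I_{H_1}^{\mathrm{top}}\cong C^*(E\setminus H_1)$ with the inclusions $\iota$, so you would have to supply it; this is routine since all the identifications are defined on the canonical generators, which $\iota_E$ matches up by Theorem~\ref{t:densesubalg}. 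Two shared caveats: Lemma~\ref{l:exchange} is stated under Condition~(K), which neither $E$ nor $E\setminus H_1$ is assumed to satisfy here (the paper's own proof uses it under the same hypotheses, and the underlying ideal-structure results for saturated hereditary sets do not need (K)); and the naturality of the comparison maps with respect to the sequences of Theorem~\ref{t:abskthy}, which you flag as the main risk, is likewise used without further comment in the paper's proof.
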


\begin{proof}
We first show the lemma is true for the case $H_{2} = E^0$ and $H_{1} = \emptyset$.  Note that from Theorem~\ref{t:abskthy}, the following diagram is commutative
\begin{equation*}
\scalebox{.9}
{\xymatrix{
\kalg_{1} ( \C )^{ E^{0} \setminus \mathrm{Sink}(E) } \ar[r]^-{1- \mathsf{B}_{E}^{t} } \ar[d] & \kalg_{1} ( \C )^{ E^{0} } \ar[r] \ar[d] & \kalg_{1} ( L_{\C} ( E) ) \ar[r] \ar[d]^{ \gamma_{ 1}^{E} } & \Z^{ E^{0} \setminus \mathrm{Sink} ( E) }  \ar[r]^-{1- \mathsf{B}_{E}^{t} }  \ar@{=}[d] & \Z^{ E^{0} } \ar@{=}[d]  \\
0 \ar[r]_-{1- \mathsf{B}_{E}^{t} } & 0 \ar[r] & \ktop_{1}( C^{*} ( E ) ) \ar[r] & \Z^{ E^{0} \setminus \mathrm{Sink} ( E) }  \ar[r]_-{1- \mathsf{B}_{E}^{t} }  & \Z^{ E^{0} }
}
}
\end{equation*}
where the rows are exact.  A diagram chase shows that $\gamma_{1}^{ E }$ is surjective and $\mathrm{coker} ( 1 - \mathsf{B}_{E}^{t} ) \cong \ker  \gamma_{1 }^{E}$.  Since $\kalg_{1} ( \C )^{ E^{0} } \cong ( \C^{\times} )^{ E^{0} }$ is a divisible group, and quotients of divisible groups are divisible, it follows that $\mathrm{coker} ( 1 - \mathsf{B}_{E}^{t} ) \cong \ker  \gamma_{1 }^{E }$ is a divisible group.

We now consider the case $H_{2} = H$ and $H_{1} = \emptyset$.  By Lemma~\ref{l:exchange}, the diagram
\begin{align*}
\xymatrix{
L_{\C} ( \overline{E}_{(H, \emptyset) } ) \ar[r]^-{\cong} \ar[d]_{ \iota_{ \overline{E}_{(H, \emptyset) } } } & I_{H}^{ \mathrm{alg} } \ar[d]^{ \iota_{E, H } } \\
C^{*} ( \overline{E}_{(H, \emptyset) } ) \ar[r]^-{\cong} & I_{H}^{ \mathrm{top} }
}
\end{align*}
is commutative.  Therefore, 
\begin{equation*}
\xymatrix{
\kalg_{1} ( L_{\C} ( \overline{E}_{(H, \emptyset) }  ) ) \ar[r]^-{\cong} \ar[d]_{ \gamma_{1 }^{\overline{E}_{(H, \emptyset) }}  } & \kalg_{1} ( I_{H}^{ \mathrm{alg} } ) \ar[d]^{ \gamma_{1, H }^{ E } } \\
\ktop_{1} ( C^{*} ( \overline{E}_{(H, \emptyset) } ) ) \ar[r]^-{\cong} & \ktop_{1} ( I_{H}^{ \mathrm{top} } )
}
\end{equation*}
is commutative.  As in the previous case, $\gamma_{1}^{\overline{E}_{(H, \emptyset) } }$ is a surjective homomorphism and $\ker(  \gamma_{1 }^{\overline{E}_{(H, \emptyset) }} )$ is a divisible group.  Hence, $\gamma_{ 1, H }^{ E }$ is surjective and $\ker( \gamma_{ 1, H}^{E} )$ is a divisible group.

Finally, suppose $H_{1}$ and $H_{2}$ are elements of $\mathcal{H}( E )$ with $H_{1} \subseteq H_{2}$.  Then by Lemma~\ref{l:exactalgtop}, the diagram
\begin{equation*}
\scalebox{.8}{
\xymatrix{
\kalg_{1} ( I_{H_{1}}^{ \mathrm{alg} } ) \ar[r] \ar[d]_{ \gamma_{1, H_{1} }^{E} } & \kalg_{1} ( I_{H_{2}}^{ \mathrm{alg} } ) \ar[r] \ar[d]_{ \gamma_{1, H_{2} }^{E} } & \kalg_{1} ( I_{H_{2}}^{ \mathrm{alg} } / I_{H_{1}}^{\mathrm{alg} } ) \ar[r] \ar[d]_{ \gamma_{1, H_{2} / H_{1} }^{E} } & K_{0} ( I_{H_{1}}^{ \mathrm{alg} } ) \ar[r] \ar[d]_{ \gamma_{0, H_{1} }^{E} }  & \kalg_{0} ( I_{H_{2}}^{ \mathrm{alg} } )  \ar[d]_{ \gamma_{0, H_{2} }^{E} }   \\
\ktop_{1} ( I_{H_{1}}^{ \mathrm{top} } ) \ar[r] & \ktop_{1} ( I_{H_{2}}^{ \mathrm{top} } ) \ar[r] & \ktop_{1} ( I_{H_{2}}^{ \mathrm{top} } / I_{H_{1}}^{\mathrm{top} } ) \ar[r] & \ktop_{0} ( I_{H_{1}}^{ \mathrm{top} } ) \ar[r] & \ktop_{0}( I_{H_{2}}^{ \mathrm{top} } ) 
}
}
\end{equation*}
is commutative and the rows are exact.  By Proposition~\ref{p:isomv}, $\gamma_{0, H_{1} }^{E}$ and $\gamma_{ 0, H_{2} }^{E}$ are isomorphisms.  From the previous case, $\gamma_{1, H_{2}}^{E}$ is surjective and $\ker( \gamma_{1, H_{2}}^{E} )$ is a divisible group.  A diagram chase shows that $\gamma_{1, H_{2} / H_{1} }^{E}$ is surjective and the map $\kalg_{1} ( I_{H_{2}}^{\mathrm{alg} } ) \rightarrow \kalg_{1} ( I_{H_{2}}^{ \mathrm{alg} } / I_{H_{1}}^{\mathrm{alg} }  )$ maps $\ker ( \gamma_{1, H_{2} }^{E} )$ onto $\ker( \gamma_{1, H_{2} / H_{1} }^{E} )$.  Since $\ker ( \gamma_{1, H_{2} }^{E} )$ is divisible and the quotient of a divisible group is divisible, $\ker( \gamma_{1, H_{2} / H_{1} }^{E} )$ is divisible.
\end{proof}

\begin{lemma}\label{l:divisibleker}
Let $G_{1}$ and $G_{2}$ be abelian groups.  Suppose $H_{i}$ is a subgroup of $G_{i}$ such that $H_{i}$ is a divisible group and $G_{i} / H_{i}$ is a free group for each $i = 1, 2$.  If $\ftn{ \alpha }{ G_{1} }{ G_{2} }$ is an isomorphism, then $\alpha ( H_{1} ) = H_{2}$.  Consequently, the restriction $\ftn{ \alpha \vert_{H_{1} } }{ H_{1} }{ H_{2} }$ is an isomorphism and there exists an isomorphism $\ftn{ \overline{ \alpha } }{ G_{1} / H_{1} }{ G_{2} / H_{2} }$ such that the diagram 
\begin{equation*}
\xymatrix{
0 \ar[r] & H_{1} \ar[r]  \ar[d]^{ \alpha \vert_{H_{1} } } & G_{1} \ar[r] \ar[d]^{ \alpha } & G_{1} / H_{1} \ar[r]  \ar[d]^{ \overline{\alpha} } & 0 \\
0 \ar[r] & H_{2} \ar[r] & G_{2} \ar[r] & G_{2} / H_{2} \ar[r] & 0 
}
\end{equation*} 
is commutative.
\end{lemma}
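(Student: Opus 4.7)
The plan is to show that each $H_i$ is forced to be the \emph{maximal divisible subgroup} of $G_i$, because any isomorphism must carry maximal divisible subgroups to maximal divisible subgroups. Recall that every abelian group $G$ has a unique largest divisible subgroup $D(G)$ (the sum of all divisible subgroups), and $G/D(G)$ is reduced, i.e.\ contains no nonzero divisible subgroup.

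First I would verify that $H_i = D(G_i)$. Clearly $H_i \subseteq D(G_i)$ since $H_i$ is divisible. For the reverse inclusion, I would take any divisible subgroup $D \subseteq G_i$ and consider its image $\overline{D}$ in $G_i/H_i$. Since the quotient of a divisible group is divisible, $\overline{D}$ is a divisible subgroup of $G_i/H_i$. Since $G_i/H_i$ is free abelian, hence reduced (a nonzero free abelian group has no nonzero divisible subgroups because a divisible element in a free abelian group would have infinitely many divisors in every coordinate), we conclude $\overline{D} = 0$, i.e.\ $D \subseteq H_i$. Hence $D(G_i) \subseteq H_i$, and so $H_i = D(G_i)$.

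Next, since $\alpha$ is an isomorphism, both $\alpha(H_1)$ and $\alpha^{-1}(H_2)$ are divisible subgroups of the respective ambient groups, so $\alpha(H_1) \subseteq D(G_2) = H_2$ and $\alpha^{-1}(H_2) \subseteq D(G_1) = H_1$. Combining these inclusions yields $\alpha(H_1) = H_2$. The restriction $\alpha|_{H_1} : H_1 \to H_2$ is then an isomorphism of groups (injectivity is inherited, and surjectivity is exactly the equality $\alpha(H_1) = H_2$), and the induced map on quotients $\overline{\alpha}(g + H_1) := \alpha(g) + H_2$ is well-defined, a homomorphism, and bijective by a standard argument (apply the same construction to $\alpha^{-1}$ to build an inverse). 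Commutativity of the diagram is immediate from the definitions.

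The only substantive step is the first one --- recognizing that ``free'' implies ``reduced,'' which forces $H_i$ to be canonically determined as the maximal divisible subgroup. Everything after that is a routine consequence of the naturality of this canonical construction under isomorphisms.
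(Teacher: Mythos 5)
Your proof is correct and follows essentially the same route as the paper: the decisive step in both is that the image of a divisible subgroup in the free quotient $G_i/H_i$ is a divisible subgroup of a free abelian group, hence zero, which yields $\alpha(H_1)\subseteq H_2$ and $\alpha^{-1}(H_2)\subseteq H_1$ and then the induced maps on the quotients. Your extra packaging via the maximal divisible subgroup $D(G_i)$ is harmless but not needed; the paper applies the same observation directly to $\alpha(H_1)$ and $\alpha^{-1}(H_2)$.
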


\begin{proof}
Let $\overline{ \alpha(H_{1}) }$ be the image of $\alpha ( H_{1} )$ in $G_{2} / H_{2}$.  Since $H_{1}$ is a divisible group, $\overline{ \alpha ( H_{1 } ) }$ is a divisible group.  Since $G_{2} / H_{2}$ is a free group, we have that $\overline{ \alpha_{1} ( H_{1 } ) } = 0$.  Thus, $\alpha ( H_{1} ) \subseteq H_{2}$.  Similar, $\alpha^{-1} ( H_{2} ) \subseteq H_{2}$.  The lemma now follows.
\end{proof}

\begin{theor}\label{t:induceiso}
Let $E_{1}$ and $E_{2}$ be row-finite graphs satisfying Condition (K).  If $\oikalg( L_{\C} ( E_{1} ) )$ is isomorphic to $\oikalg ( L_{\C} ( E_{2} ) )$, then $\oiktop( C^{*} (E_{1} ) )$ is isomorphic to $\oiktop ( C^{*} ( E_{2} ) )$.   Moreover, the isomorphism can be chosen such that the diagram
\begin{align*}
\xymatrix{
\kalg_{m} ( L_{\C} ( E_{1} ) ) \ar[r]^{ \cong } \ar[d]_{ \gamma^{E_1}_{m } } & \kalg_{m} ( L_{\C} ( E_{2} ) ) \ar[d]^{ \gamma_{m}^{E_2} } \\
\ktop_{m} ( C^{*} ( E_{1} ) ) \ar[r]_{ \cong} & \ktop_{m} ( C^{*} ( E_{2} ) )
}
\end{align*}
is commutative for $m = 0 , 1$.
\end{theor}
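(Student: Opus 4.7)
The strategy is to transport the given isomorphism $(\{\alpha_n^{H_1,H_2}\}, \beta): \oikalg(L_\C(E_1)) \to \oikalg(L_\C(E_2))$ across the comparison maps $\gamma_{n, H_2/H_1}^{E_i}$ to obtain an isomorphism of ideal-related topological $K$-theories. The lattice isomorphism $\beta \colon \mathcal{H}(E_1) \to \mathcal{H}(E_2)$ carries over verbatim, since by Theorem~\ref{t:densesubalg} the ideal lattices of both $L_\C(E_i)$ and $C^*(E_i)$ are naturally identified with $\mathcal{H}(E_i)$.

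For the $K_0$-level, I would set
\[
\tilde{\alpha}_0^{H_1, H_2} := \gamma_{0, \beta(H_2)/\beta(H_1)}^{E_2} \circ \alpha_0^{H_1, H_2} \circ \bigl(\gamma_{0, H_2/H_1}^{E_1}\bigr)^{-1}.
\]
This is well-defined and is an order isomorphism because, by Proposition~\ref{p:isomv}, both $\gamma_0$ maps are order isomorphisms, and $\alpha_0$ is an order isomorphism by hypothesis. By construction, the diagram in the theorem commutes for $m = 0$ upon taking $H_1 = \emptyset$ and $H_2 = E_i^0$.

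For the $K_1$-level, the plan is to invoke Lemma~\ref{l:divisibleker}. By Lemma~\ref{l:K1}, $\gamma_{1, H_2/H_1}^{E_i}$ is surjective with divisible kernel, so $\ktop_1(I_{H_2}^{\mathrm{top}}/I_{H_1}^{\mathrm{top}})$ is identified with $\kalg_1(I_{H_2}^{\mathrm{alg}}/I_{H_1}^{\mathrm{alg}})$ modulo a divisible subgroup. To apply Lemma~\ref{l:divisibleker}, one must know that the resulting quotient is free abelian; this follows from Theorem~\ref{t:abskthy}, since $\ktop_1(\C) = 0$ forces $\ktop_1(C^*(F))$ to be isomorphic to the kernel of $1 - \mathsf{B}_F^t$ on a free abelian group for any row-finite graph $F$, and each subquotient $I_{H_2}^{\mathrm{top}}/I_{H_1}^{\mathrm{top}}$ is itself isomorphic to the $C^*$-algebra of a row-finite graph by the standard analysis of ideals and quotients of graph $C^*$-algebras (in the spirit of Lemma~\ref{l:exchange}). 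Lemma~\ref{l:divisibleker} then produces a unique isomorphism $\tilde{\alpha}_1^{H_1,H_2}$ between the topological $K_1$-groups making the square involving $\alpha_1^{H_1,H_2}$ and the comparison maps commute; specializing to $H_1 = \emptyset$, $H_2 = E_i^0$ yields the commutativity in the theorem for $m = 1$.

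The main obstacle is verifying that the family $\{\tilde{\alpha}_n^{H_1,H_2}\}$ respects the natural transformations $\iota_*$, $\pi_*$, and $\partial$ associated to triples $H_1 \subseteq H_2 \subseteq H_3$ in $\mathcal{H}(E_1)$. For $\iota_*$ and $\pi_*$ this is immediate from functoriality, the fact that the $\alpha_n$ preserve them by hypothesis, and the naturality of $\gamma$ with respect to $\iota_*$ and $\pi_*$. For the boundary map $\partial$, one uses the commutativity of the squares in Lemma~\ref{l:exactalgtop} (which asserts that the comparison maps $\gamma$ are themselves natural with respect to $\partial$) combined with the assumption that the $\alpha_n$ intertwine the algebraic boundary maps. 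A diagram chase exploiting the surjectivity of $\gamma_1$ (Lemma~\ref{l:K1}) and the uniqueness part of Lemma~\ref{l:divisibleker} then reduces the compatibility of $\tilde{\alpha}_1$ and $\tilde{\alpha}_0$ with the topological $\partial$ to the known algebraic compatibility, completing the construction.
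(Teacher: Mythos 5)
Your proposal is correct and follows essentially the same route as the paper: transport $\beta$ via the identification of both ideal lattices with $\mathcal{H}(E_i)$, conjugate the $K_0$-data by the order isomorphisms of Proposition~\ref{p:isomv}, descend the $K_1$-data through the surjections with divisible kernel of Lemma~\ref{l:K1} using Lemma~\ref{l:divisibleker} (with free quotient $\ktop_1$), and then check compatibility with the natural transformations by a diagram chase based on Lemma~\ref{l:exactalgtop}. You even make explicit two points the paper leaves implicit (freeness of $\ktop_1$ of subquotients and the uniqueness in the descent), so no further changes are needed.
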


\begin{proof}
Suppose $\oikalg( L_{\C} ( E_{1} ) ) \cong \oikalg ( L_{\C} ( E_{2} ) )$.  By Remark~\ref{ideals-sat-hered-ident-rem}, there exists a lattice isomorphism $\ftn{ \beta }{ \mathcal{H} ( E_{1} ) } { \mathcal{H} ( E_{2} ) }$ and for each pair $H_{1}$ and $H_{2}$ in $\mathcal{H} ( E_{1} )$ with $H_{1} \subseteq H_{2}$, there exist group isomorphisms
\begin{align*}
\ftn{ \lambda_{n}^{H_{1} , H_{2} } }{ \kalg_{n} ( I_{H_{2}}^{\mathrm{alg} }  / I_{ H_{1} }^{\mathrm{alg} } ) }{ \kalg_{n} ( I_{\beta ( H_{2} ) }^{\mathrm{alg} }  / I_{ \beta( H_{1} ) }^{\mathrm{alg} } ) } \text{ for all } n \in \Z
\end{align*}
with $\lambda_{0}^{ H_{1} , H_{2}}$ an order isomorphism, and this collection preserves all natural transformations. 

Let $H_{1} , H_{2} \in \mathcal{H} ( E_{1} )$ with $H_{1} \subseteq H_{2}$.  Since $\lambda_{m}^{ H_{1}, H_{2} }$ is an isomorphism for $m= 0 ,1$, by Proposition~\ref{p:isomv}, Lemma~\ref{l:divisibleker}, and Lemma~\ref{l:K1}, there exists an isomorphism 
\begin{align*}
\ftn{ \alpha_{m }^{ H_{1} , H_{2} } }{ \ktop_{m} ( I_{H_{1}}^{ \mathrm{top} }/I_{H_{2}}^{ \mathrm{top} } ) }{ \ktop_{m} ( I_{ \beta( H_{2} ) }^{ \mathrm{top} }/I_{ \beta( H_{1} ) }^{ \mathrm{top} } ) }
\end{align*}
such that the diagram
\begin{align*}
\xymatrix{
\kalg_{m} \left( I_{H_{2}}^{ \mathrm{alg} }/I_{H_{1}}^{ \mathrm{alg} } \right) \ar[rr]^-{ \lambda_{m}^{H_{1} , H_{2}} }  \ar[d]_{ \gamma_{m, H_{2} / H_{1} }^{E_{1}} } & & \kalg_{m} \left( I_{ \beta( H_{2} ) }^{ \mathrm{alg} }/ I_{ \beta( H_{1} ) }^{ \mathrm{alg} } \right) \ar[d]^{ \gamma_{m, \beta ( H_{2} ) /  \beta( H_{1} ) }^{E_{2}} }  \\
\ktop_{m} \left( I_{ H_{2}}^{ \mathrm{top} }/I_{H_{1}}^{ \mathrm{top} } \right)  \ar[rr]_-{ \alpha_{m}^{ H_{1} , H_{2} } } & & \ktop_{m}\left( I_{ \beta( H_{2} ) }^{ \mathrm{top} }/ I_{ \beta( H_{1} ) }^{ \mathrm{top} } \right) 
}
\end{align*} 
is commutative for $m= 0 ,1$ and $\alpha_{0}^{  H_{1} , H_{1} }$ is an order isomorphism.

Since the collection 
$$\phi := \left( \left\{ \lambda^{H_{1}, H_{2} }_n : H_{1},  H_{2} \in \mathcal{H} ( E_{1} ) \text{ with } H_{1} \subseteq  H_{2}, n \in \Z  \right\}, \beta \right)$$
is an isomorphism from $\oikalg( L_{\C} ( E_{1} ) )$ to $\oikalg ( L_{\C} ( E_{2} ) )$, 
by Lemma~\ref{l:exactalgtop} and Remark~\ref{ideals-sat-hered-ident-rem}, the collection
$$\psi := \left( \left\{ \alpha^{H_{1}, H_{2} }_n : H_{1},  H_{2} \in \mathcal{H} ( E_{1} ) \text{ with } H_{1} \subseteq  H_{2}, n \in \Z  \right\}, \beta \right)$$
is an isomorphism from $\oiktop ( C^{*} ( E_{1} ) )$ to $\oiktop ( C^{*} ( E_{2} ) )$.
\end{proof}

\begin{remar}
It is possible that a result similar to that of Theorem~\ref{t:induceiso} holds whenever we have two $*$-algebras with isomorphic ideal-related algebraic $K$-theories and we pass to their enveloping $C^*$-algebras.  However, the authors do not no how such a general result would be proven.
\end{remar}

\begin{theor}\label{t:meqstrmeq}
Let $\mathcal{C}$ be a class of graphs that satisfies the following two properties:
\begin{itemize}
\item[(1)] Every graph in $\mathcal{C}$ satisfies Condition~(K).
\item[(2)] If $E, F \in \mathcal{C}$ and $\oiktop( C^*(E )) \cong \oiktop (C^*(F))$, then $C^{*} ( E )$ is strongly Morita equivalent to $C^{*} ( F )$.
\end{itemize}
Then the Morita equivalence conjecture holds for all graphs in $\mathcal{C}$.  In other words, if $E, F \in \mathcal{C}$ and $L_{\C} ( E )$ is Morita equivalent to $L_{\C} ( F )$, then $C^{*} ( E )$ is strongly Morita equivalent to $C^{*} ( F )$.
\end{theor}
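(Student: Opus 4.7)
The plan is to chain three facts together: (i) Morita equivalence of the Leavitt path algebras induces an isomorphism of their ideal-related algebraic $K$-theories; (ii) Theorem~\ref{t:induceiso} then promotes this to an isomorphism of the ideal-related topological $K$-theories of the corresponding graph $C^*$-algebras; (iii) hypothesis (2) yields the desired strong Morita equivalence of $C^*$-algebras. Hypothesis (1) ensures that both Theorem~\ref{t:induceiso} and the very definition of $\oikalg$ apply to the Leavitt path algebras under consideration.

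First, suppose $E, F \in \mathcal{C}$ with $L_{\C}(E)$ Morita equivalent to $L_{\C}(F)$. By the characterization recalled at the start of Section~\ref{Notation-Prelim-sec}, we have a ring isomorphism $\Phi : \textsf{M}_\infty(L_{\C}(E)) \to \textsf{M}_\infty(L_{\C}(F))$, and I would unpack $\Phi$ into the data required by Remark~\ref{ideals-sat-hered-ident-rem}. The ideals of $\textsf{M}_\infty(R)$ are in lattice-preserving bijection with ideals of $R$ via $I \mapsto \textsf{M}_\infty(I)$, so $\Phi$ induces a lattice isomorphism $\beta : \mathcal{H}(E) \to \mathcal{H}(F)$ on the ideal lattices (identified with saturated hereditary subsets). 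For each pair $H_1 \subseteq H_2$ in $\mathcal{H}(E)$, restricting and descending $\Phi$ yields a ring isomorphism
\begin{align*}
\textsf{M}_\infty\!\left(I_{H_2}^{\mathrm{alg}}/I_{H_1}^{\mathrm{alg}}\right) \;\cong\; \textsf{M}_\infty\!\left(I_{\beta(H_2)}^{\mathrm{alg}}/I_{\beta(H_1)}^{\mathrm{alg}}\right),
\end{align*}
and Morita invariance of algebraic $K$-theory produces group isomorphisms
\begin{align*}
\lambda_n^{H_1,H_2} : \kalg_n\!\left(I_{H_2}^{\mathrm{alg}}/I_{H_1}^{\mathrm{alg}}\right) \longrightarrow \kalg_n\!\left(I_{\beta(H_2)}^{\mathrm{alg}}/I_{\beta(H_1)}^{\mathrm{alg}}\right)
\end{align*}
for every $n \in \Z$. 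Order isomorphism of $\lambda_0^{H_1,H_2}$ follows from Morita invariance of the monoid $V(\cdot)$, and naturality of $\Phi$ with respect to ideal inclusions and quotients will ensure that the $\lambda_n^{H_1,H_2}$ intertwine all six-term exact sequences of subquotients (which exist by Lemma~\ref{l:excisionlocalunit} and Lemma~\ref{l:sqlocalunits}). Combined with Remark~\ref{ideals-sat-hered-ident-rem}, this yields $\oikalg(L_\C(E)) \cong \oikalg(L_\C(F))$.

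Next, I would invoke Theorem~\ref{t:induceiso} to conclude $\oiktop(C^*(E)) \cong \oiktop(C^*(F))$, and then apply hypothesis (2) to obtain that $C^*(E)$ is strongly Morita equivalent to $C^*(F)$, as required.

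The main obstacle is really the bookkeeping in step (i): one must verify that the family $\{\lambda_n^{H_1,H_2}\}$ is natural in $(H_1, H_2)$ and intertwines all connecting homomorphisms $\partial$ in the long exact sequences, since each such map has to be traced through the excision isomorphism of Lemma~\ref{l:excisionlocalunit}. This is essentially routine once the correspondence $\Phi \leftrightarrow \beta$ is established, as it reduces to standard functoriality of algebraic $K$-theory together with the functoriality of the excision construction. No genuinely new idea is required beyond translating the hypothesis ``Morita equivalence'' into the concrete ring isomorphism $\textsf{M}_\infty(L_\C(E)) \cong \textsf{M}_\infty(L_\C(F))$ provided by the setup of Section~\ref{Notation-Prelim-sec}.
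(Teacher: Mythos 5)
There is a genuine gap: Theorem~\ref{t:induceiso}, which your step (ii) relies on, is stated only for \emph{row-finite} graphs satisfying Condition~(K), and the same is true of Remark~\ref{ideals-sat-hered-ident-rem}, which you use to identify the ideal lattice of $L_\C(E)$ with $\mathcal{H}(E)$. The class $\mathcal{C}$ in Theorem~\ref{t:meqstrmeq} is only assumed to consist of graphs satisfying Condition~(K); its members may have infinite emitters. For such graphs the (closed) ideals are parametrized by admissible pairs (saturated hereditary subsets together with sets of breaking vertices), not by $\mathcal{H}(E)$ alone, so the lattice identification underlying your construction of $\beta$ and the hypotheses of Theorem~\ref{t:induceiso} both fail. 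The paper removes this obstruction \emph{before} doing any $K$-theory: it replaces $E$ and $F$ by desingularizations $\widetilde{E}$, $\widetilde{F}$, which are row-finite, satisfy Condition~(K), have $L_\C(\widetilde{E})$ Morita equivalent to $L_\C(E)$ and $C^*(\widetilde{E})$ strongly Morita equivalent to $C^*(E)$ (and likewise for $F$), and only then runs the $K$-theoretic argument; at the end it returns to $C^*(E)$ and $C^*(F)$ using invariance of $\oiktop$ under strong Morita equivalence. Your proposal omits this step entirely.

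A second, smaller issue is that your step (i) asserts a Morita-invariance statement for $\oikalg$: that the ring isomorphism $\mathsf{M}_\infty(L_\C(E)) \cong \mathsf{M}_\infty(L_\C(F))$ descends to isomorphisms of the $K$-groups of all subquotients, compatibly with every connecting map and with the order on $K_0$. You call this routine, but it is exactly the bookkeeping the paper's proof is engineered to avoid: it passes to the stabilized graphs $S\widetilde{E}$, $S\widetilde{F}$ (which remain row-finite and satisfy Condition~(K)), uses $\mathsf{M}_\infty(L_\C(\widetilde{E})) \cong L_\C(S\widetilde{E})$ so that the Morita equivalence becomes an honest ring isomorphism $L_\C(S\widetilde{E}) \cong L_\C(S\widetilde{F})$, which tautologically yields $\oikalg(L_\C(S\widetilde{E})) \cong \oikalg(L_\C(S\widetilde{F}))$, and then applies Theorem~\ref{t:induceiso} to $S\widetilde{E}$ and $S\widetilde{F}$. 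If you wish to keep your route, you would need to restrict attention to row-finite graphs (or first desingularize, as the paper does) and then actually prove the compatibility of the stabilization isomorphisms with the long exact sequences and the positive cones, in the spirit of Proposition~\ref{p:algkthyfullcorner}; as written, the argument does not go through.
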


\begin{proof}
Suppose $E, F \in \mathcal{C}$ and $L_{\C} ( E )$ is Morita equivalent to $L_{\C} ( F )$.  By \cite[Theorem~2.11]{DT1} and \cite[Theorem~5.2]{AbrPino3} there exists a row-finite graph $\widetilde{E}$ satisfying Condition~(K) such that $L_{\C} ( \widetilde{E} )$ is Morita equivalent to $L_\C(E)$ and $C^*(\widetilde{E})$ is strongly Morita equivalent to $C^*(E)$.  (Such a graph $\widetilde{E}$ is called a \emph{desingularization} of $E$.)  Likewise, there exists a row-finite graph $\widetilde{F}$ satisfying Condition~(K) such that $L_{\C} ( \widetilde{F} )$ is Morita equivalent to $L_\C(F)$ and $C^*(\widetilde{F})$ is strongly Morita equivalent to $C^*(F)$.  

Let $S\widetilde{E}$ be the stabilization graph of $\widetilde{E}$, formed by adding an infinite head to each vertex of $\widetilde{E}$.  It follows from \cite[Proposition~9.8]{gamt:isomorita} that $L_\C (S\widetilde{E}) \cong\mathsf{M}_\infty (L_\C(\widetilde{E}))$ (as $*$-algebras) and $C^*( S\widetilde{E}) \cong C^*(\widetilde{E}) \otimes \K$ (as $*$-algebras).  Likewise, if $S\widetilde{F}$ is the stabilization graph of $\widetilde{F}$, then $L_\C (S\widetilde{F}) \cong\mathsf{M}_\infty (L_\C(\widetilde{F}))$ (as $*$-algebras) and $C^*( S\widetilde{F}) \cong C^*(\widetilde{F}) \otimes \K$ (as $*$-algebras).

Since  $L_{\C} ( E )$ is Morita equivalent to $L_{\C} ( F )$, it follows that $L_{\C} ( \widetilde{E} )$ is Morita equivalent to $L_{\C} ( \widetilde{F} )$.  By \cite[Proposition~9.10]{gamt:isomorita} $\mathsf{M}_\infty ( L_{\C} ( \widetilde{E} ) ) \cong \mathsf{M}_\infty ( L_{\C} ( \widetilde{F} ) )$ (as rings).  Thus $L_\C (S\widetilde{E}) \cong L_\C (S\widetilde{F})$ (as rings).  Hence $\oikalg (L_\C(S\widetilde{E})) \cong \oikalg (L_\C(S \widetilde{F}))$.  Since $\widetilde{E}$ and $\widetilde{F}$ are row-finite and satisfy Condition~(K), and since the process of stabilizing a graph preserves row-finiteness and Condition~(K), Theorem~\ref{t:induceiso} implies $\oiktop (C^*(S\widetilde{E})) \cong \oiktop (C^*(S \widetilde{F}))$.

In addition, since $C^*(E)$ is strongly Morita equivalent to $C^*(S\widetilde{E})$, and  $C^*(F)$ is strongly Morita equivalent to $C^*(S\widetilde{F})$, it follows that $$\oiktop (C^*(E)) \cong \oiktop (C^*(S \widetilde{E}))  \ \text{ and } \ \oiktop (C^*(F)) \cong \oiktop (C^*(S \widetilde{F})).$$  Hence $$\oiktop (C^*(E)) \cong \oiktop (C^*(F)).$$
By hypothesis, we then have $C^*(E)$ is strongly Morita equivalent to $C^*(F)$.
\end{proof}

\begin{remar}
Define
$$\mathcal{C} := \{ E : \text{$E$ is a graph and $C^*(E)$ has finitely many ideals} \}$$
and note that $\mathcal{C}$ coincides with the class of graphs $E$ such that $L_\C(E)$ has finitely many ideals.  It follows from basic graph algebra results that every graph in $\mathcal{C}$ satisfies Condition~(K), and hence $\mathcal{C}$ satisfies Property~(1) of Theorem~\ref{t:meqstrmeq}.  It was boldly conjectured in \cite{ERRlinear} that the $C^*$-algebras of the graphs in $\mathcal{C}$ are determined up to stable isomorphism by their ideal-related topological $K$-theory; that is, it was conjectured that $\mathcal{C}$ satisfies Property~(2) of Theorem~\ref{t:meqstrmeq}.  If this conjecture is true, then Theorem~\ref{t:meqstrmeq} implies that the Morita equivalence conjecture of  \cite[\S9]{gamt:isomorita} holds for all graphs in $\mathcal{C}$. 

Although it is not known if all graphs with $C^*$-algebras having finitely many ideals are determined up to stable isomorphism by their  ideal-related topological $K$-theory, special cases of this conjecture have been established for many subclasses of $\mathcal{C}$.  This allows us to establish a number of corollaries using these results.
\end{remar}

\begin{corol} \label{many-classifications-cor}
Let $E$ and $F$ be graphs, and suppose that $L_{\C} ( E )$ is Morita equivalent to $L_{\C} ( F )$.  If any one of the following bulleted points is true:
\begin{itemize}
\item $C^*(E)$ and $C^*(F)$ each have exactly one proper, nonzero ideal.
\item $C^*(E)$ and $C^*(F)$ each have a largest proper ideal that is also AF.
\item $C^*(E)$ and $C^*(F)$ have a smallest nonzero ideal that is purely infinite and whose corresponding quotient is AF.
\item $E$ and $F$ are each finite graphs with no sinks satisfying Condition~(K)
\item $E$ and $F$ are each amplified graphs
\item $C^*(E)$ and $C^*(F)$ are each purely infinite and have primitive ideal spaces that are accordion spaces in the sense of \cite[Definition~1.1]{rasmusmanuel}.
\item $C^*(E)$ and $C^*(F)$ are each simple.
\end{itemize}
then $C^{*} ( E )$ is strongly Morita equivalent to $C^{*} ( F )$.
\end{corol}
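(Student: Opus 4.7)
The plan is to apply Theorem~\ref{t:meqstrmeq} once for each bullet. Concretely, for each bulleted hypothesis I would exhibit a class $\mathcal{C}$ of graphs containing both $E$ and $F$ satisfying (i) every graph in $\mathcal{C}$ satisfies Condition~(K) and (ii) graph $C^*$-algebras of graphs in $\mathcal{C}$ are classified up to stable isomorphism by ideal-related topological $K$-theory. Once both properties are verified, Theorem~\ref{t:meqstrmeq} immediately yields strong Morita equivalence of $C^*(E)$ and $C^*(F)$.

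For property~(i), I would argue as follows. For the bullets phrased in terms of ideal structure (bullets 1--3), having only finitely many ideals of the prescribed sort translates, via the correspondence between ideals in $C^*(E)$ and saturated hereditary subsets of $E^0$, into a restriction on the cycle structure of $E$ that forces Condition~(K); for example, if $v$ were the base point of exactly one simple cycle, this cycle would generate an ideal not of the form appearing in the hypothesis. Bullet~4 assumes Condition~(K) outright. For bullet~5, amplified graphs trivially satisfy Condition~(K) because whenever a simple cycle passes through a vertex, the amplified structure produces infinitely many parallel edges, hence another simple cycle with the same base point. For bullet~6, purely infinite graph $C^*$-algebras come from graphs satisfying Condition~(L), and once the primitive ideal space is restricted to be an accordion space (in particular, finite) the graphs automatically satisfy Condition~(K). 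Bullet~7 (simple) splits into the AF case (graph has no cycles) and the purely infinite simple case (graph satisfies Condition~(K) by standard graph algebra results), both of which satisfy Condition~(K).

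For property~(ii) I would invoke the appropriate existing classification result in each case: for bullet~1, the classification of essential extensions of one simple graph algebra by another by Eilers--Restorff--Ruiz; for bullets~2 and~3, the corresponding extension-classification results in which one subquotient is AF and the other is purely infinite (Eilers--Restorff, building on Elliott and Rørdam); for bullet~4, the classification of Cuntz--Krieger algebras (and more generally finite-graph $C^*$-algebras with no sinks satisfying Condition~(K)) by their ideal-related topological $K$-theory due to Restorff (and its extension by Eilers--Restorff--Ruiz--S\o rensen); for bullet~5, the Eilers--Ruiz--S\o rensen classification of graph $C^*$-algebras of amplified graphs by filtered $K$-theory; for bullet~6, the Bentmann--K\"ohler classification of purely infinite $C^*$-algebras with accordion primitive ideal spaces by filtrated $K$-theory; and for bullet~7, the Kirchberg--Phillips theorem in the purely infinite simple case together with Elliott's theorem in the AF simple case, noting that for simple algebras $\oiktop$ reduces to $(\ktop_0, \ktop_0^+, \ktop_1)$.

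The main obstacle I anticipate is not the overall structure of the argument, which is a direct application of Theorem~\ref{t:meqstrmeq}, but rather matching the invariant used in each cited classification theorem to the $\oiktop$ of Definition~\ref{topKweb}. Some of the results (notably Bentmann--K\"ohler) are stated in terms of the Meyer--Nest filtrated $K$-theory, which carries potentially more natural transformations than $\oiktop$; to close this gap I would appeal to the fact that on the relevant primitive ideal spaces (totally ordered in bullets~1--3, accordion in bullet~6, etc.) the extra natural transformations are determined by the data already present in $\oiktop$, so that the two invariants agree on the given subclass. In particular, I would verify in each case that an isomorphism of $\oiktop$ lifts to an isomorphism of whatever refined invariant the cited classification theorem uses, thereby giving the hypothesis required by Theorem~\ref{t:meqstrmeq}.
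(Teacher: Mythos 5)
Your proposal is correct and follows essentially the same route as the paper: verify Properties (1) and (2) of Theorem~\ref{t:meqstrmeq} bullet by bullet (Condition~(K) from the ideal-structure or graph hypotheses, and Property~(2) from the existing classification theorems of Eilers--Tomforde, Eilers--Restorff--Ruiz, Restorff, Eilers--Ruiz--S\o rensen, Bentmann--K\"ohler, and Elliott/Kirchberg--Phillips), then apply the theorem. The only cosmetic differences are some attributions of the cited classification results and the paper's explicit use of source removal in the fourth bullet to reduce to Cuntz--Krieger algebras satisfying Condition~(II); your extra care about matching $\oiktop$ with the Meyer--Nest invariant in the accordion case is a reasonable refinement rather than a departure.
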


\begin{proof}
We first prove the case when the first bulleted point is satisfied.  In this case, let
$$\mathcal{C} := \{ E : \text{$E$ is a graph and $C^*(E)$ has exactly one proper, nonzero ideal} \}.$$  Since every graph whose associated $C^*$-algebra has a finite number of ideals must satisfy Condition~(K), $\mathcal{C}$ satisfies Property~(1) of Theorem~\ref{t:meqstrmeq}.  It follows from \cite[Theorem~4.5]{semt_classgraphalg} that $\mathcal{C}$ satisfies Property~(2) of Theorem~\ref{t:meqstrmeq}.  Thus $C^*(E)$  
is strongly Morita equivalent to $C^{*} ( F )$.

For each of the following bullet points we use a similar strategy to apply Theorem~\ref{t:meqstrmeq}; namely, we define a collection $\mathcal{C}$ of graphs based on properties stated in the bullet point, and then show that that collection satisfies Property~(1) and Property~(2) in the hypotheses of Theorem~\ref{t:meqstrmeq}.

For the second bullet point, Property~(1) follows from basic graph algebra results stating a graph whose $C^*$-algebra has a largest ideal that is also AF must satisfy Condition~(K), and Property~(2) follows from \cite[Theorem~4.6]{semt_classgraphalg}.

For the third bullet point, Property~(1) follows from basic graph algebra results stating a graph whose $C^*$-algebra has smallest nonzero ideal that is purely infinite and whose corresponding quotient is AF must satisfy Condition~(K), and Property~(2) follows from \cite[Corollary~6.4]{segrer:ccfis}.

For the fourth bullet point, the operation of source removal (see \cite[Proposition~3.1]{as:geo}) allows us to assume that without loss of generality that the graphs each have no sources.  Thus the graphs are finite graphs with no sinks or sources satisfying Condition~(K), and $C^*(E)$ and $C^*(F)$ are each Cuntz-Krieger algebras of matrices satisfying Condition~(II).  Property~(1) then holds by hypothesis, and Property~(2) follows from Restorff's results in \cite{gr:ckalg}, which show that Cuntz-Krieger algebras of matrices satisfying Condition~(II) are classified up to stable isomorphism by their ideal-related topological $K$-theory. 

For the fifth bullet point, Property~(1) holds because all amplified graphs satisfy Condition~(K), and Property~(2) follows from \cite[Theorem~5.7]{seaser:amplified}.

For the sixth bullet point, Property~(1) holds because any graph $C^*$-algebra with a primitive ideal space that is an accordion space has a finite number of ideals and hence must come from a graph satisfying Condition~(K), and Property~(2) follows from \cite{rasmus} and \cite[Theorem~1.3]{rasmusmanuel}. 

For the seventh bullet point, Property~(1) holds because any simple graph $C^*$-algebra must come from a graph that satisfies Condition~(K), and Property~(2) follows the fact that any simple graph $C^*$-algebra is either purely infinite or AF, and hence classified up to stable isomorphism by either Elliott's theorem or the Kirchberg-Phillips classification theorem.  (We also observe that ideal-related topological $K$-theory reduces to ordered $K$-theory in the simple case.)
\end{proof}

\begin{remar}
The result in the last bullet point of Corollary~\ref{many-classifications-cor} extends \cite[Corollary~9.16]{gamt:isomorita} to non-row-finite graphs.
\end{remar}

\section{The Isomorphism Conjecture for graph algebras} \label{Iso-Con-sec}

In this section we consider the isomorphism conjecture for graph algebras.  As in the previous section, we consider ideal-related $K$-theory, however, now we must also keep track of the position of the class of the unit in the $K_0$-group.  We prove that if the ideal-related  algebraic $K$-theories of two unital Leavitt path algebras of graphs satisfying Condition~(K) are isomorphic via an isomorphism taking the class of the unit to the class of the unit, then there is an isomorphism between the ideal-related topological $K$-theories of the graph $C^*$-algebras taking the class of the unit to the class of the unit.  This allows us to prove that the  isomorphism conjecture for graph algebras holds for any class of graphs whose $C^*$-algebras are classified up to isomorphism by their ideal-related topological $K$-theory plus the position of the class of the unit.  By applying existing classification theorems, we confirm the isomorphism conjecture for a number of specific classes of graphs in the corollaries at the end of this section.

\begin{lemma}\label{l:cornerlocalunit}
Let $R$ be a ring and let $e \in R$ be an idempotent such that $R e R = R$.  Let $I$ be an ideal of $R$.  If $I$ has a countable approximate unit consisting of idempotents, then the ideal of $R$ generated by $e I e$ is equal to $I$.  Moreover, if every ideal of $R$ has a countable approximate unit consisting of idempotents, then $$I \mapsto e I e$$ is a lattice isomorphism from $\ilat ( R )$ to $\ilat ( e R e )$ with inverse given by $$I \mapsto \text{the ideal in $R$ generated by $I$}.$$
\end{lemma}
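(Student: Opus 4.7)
The plan is to prove the equality $\langle eIe\rangle_R = I$ first (where $\langle eIe\rangle_R$ denotes the ideal of $R$ generated by $eIe$), and then deduce the lattice isomorphism from this together with routine computations inside $eRe$. The inclusion $\langle eIe\rangle_R \subseteq I$ is immediate from $eIe \subseteq I$ and the fact that $I$ is two-sided. The reverse inclusion is the crux of the argument, and the idea is to sandwich a given $x \in I$ between copies of $e$ by a two-step maneuver combining the approximate unit with $ReR = R$.

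Fix $x \in I$. Using the approximate unit $\{u_n\}\subseteq I$ of idempotents, choose $n$ with $u_n x = x u_n = x$, so that $x = u_n x u_n$. Next, use $ReR = R$ to write $u_n$ as a finite sum $u_n = \sum_j a_j e b_j$ with $a_j, b_j \in R$, and substitute:
\[
x \;=\; u_n x u_n \;=\; \sum_{j, k} a_j \bigl(e\, (b_j x a_k)\, e\bigr) b_k.
\]
Since $I$ is two-sided and $x \in I$, each middle factor $b_j x a_k$ lies in $I$, so $e(b_j x a_k)e \in eIe$, and each summand therefore lies in $R\,(eIe)\,R \subseteq \langle eIe\rangle_R$. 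Hence $x \in \langle eIe\rangle_R$, proving the first assertion.

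For the lattice isomorphism, I would define $\Phi : \ilat(R) \to \ilat(eRe)$ by $\Phi(I) = eIe$ (this is an ideal of $eRe$ since $(er_1 e)(eie)(er_2 e) = e(r_1 i r_2) e \in eIe$) and $\Psi : \ilat(eRe) \to \ilat(R)$ by $\Psi(J) = \langle J\rangle_R$. Both maps manifestly preserve inclusion, so it suffices to check they are mutually inverse. The composite $\Psi\Phi(I) = \langle eIe\rangle_R = I$ is exactly the first assertion, which applies because every ideal of $R$ has an approximate unit of idempotents by hypothesis. For $\Phi\Psi(J) = e\langle J\rangle_R e = J$, observe that any element of $\langle J\rangle_R$ is a finite sum of terms of the form $r_1 j r_2$, $r_1 j$, $j r_2$, and $n j$ with $r_i \in R$, $j \in J$, and $n \in \Z$; applying $e(\cdot)e$ and using the identities $j = eje = ej = je$ (which follow from $j \in eRe$ and $e^2 = e$), each such term collapses respectively to $(e r_1 e)\, j\, (e r_2 e)$, $(e r_1 e)\, j$, $j\, (e r_2 e)$, or $nj$, all of which lie in $J$ because $J$ is an ideal of $eRe$. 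The reverse inclusion $J \subseteq e\langle J\rangle_R e$ is immediate from $J = eJe \subseteq \langle J\rangle_R$.

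The main obstacle is the first claim: the approximate unit alone provides $x = u_n x u_n$, but this does not yet sandwich $x$ between copies of $e$. The decisive step is to invoke $ReR = R$ to decompose $u_n = \sum_j a_j e b_j$, converting the outer idempotents into copies of $e$ at the cost of absorbing extra ring elements into the outer factors, and thereby producing middle pieces $e(b_j x a_k)e \in eIe$. Once this trick is in hand, the verification of well-definedness of $\Phi$ and $\Psi$, the computation of $e(\cdot)e$ on generators of $\langle J\rangle_R$, and the observation that order-preserving mutual inverses between ideal lattices are lattice isomorphisms are all routine bookkeeping using the idempotence of $e$.
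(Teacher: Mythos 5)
Your proof is correct, and for the crucial containment $I \subseteq \langle eIe\rangle_R$ it takes a slightly different, and more economical, route than the paper. The paper decomposes $x$ itself: from $x \in ReR$ and $u_n \in I$ it first deduces $x = u_n x u_n \in IeI$, where $IeI = \{\, er + se + \sum_{i} r_i e s_i : r,s,r_i,s_i \in I \,\}$, i.e.\ it proves the intermediate identity $IeI = I$, and then it applies the approximate unit a \emph{second} time, term by term, to insert the missing copy of $e$ (e.g.\ $er = e u_{n_1} e r \in (eIe)R$ and $r_i e s_i = r_i e u_{k_i} e s_i \in R(eIe)R$). You instead decompose the approximate-unit element: writing $u_n = \sum_j a_j e b_j$ and expanding $x = u_n x u_n = \sum_{j,k} a_j e (b_j x a_k) e b_k$ produces both copies of $e$ in one pass, with membership in $I$ pushed into the middle factors $b_j x a_k$; this collapses the paper's two steps into one and makes the auxiliary set $IeI$ unnecessary. (If one reads $ReR$ as the two-sided ideal generated by $e$ in a possibly non-unital ring, the decomposition of $u_n$ may also contain summands $\ell e$, $ae$, $eb$; the same regrouping disposes of those cross terms, so nothing is lost, and the paper's own proof rests on the same convention.) The second half of your argument is the same as the paper's: the maps $I \mapsto eIe$ and $J \mapsto \langle J\rangle_R$ are mutually inverse and both preserve inclusion, with the identity $\langle eIe\rangle_R = I$ supplying one composite; you simply write out the verification of $e\langle J\rangle_R e = J$ that the paper declares straightforward. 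One cosmetic slip: $(er_1e)(eie)(er_2e)$ equals $e(r_1 e i e r_2)e$ rather than $e(r_1 i r_2)e$, but since $r_1 e i e r_2 \in I$ the conclusion that the product lies in $eIe$ is still immediate, so this does not affect the argument.
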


\begin{proof}
Suppose  $\{ u_{n} \}_{ n = 1}^{ \infty }$ is a countable approximate unit consisting of idempotents for $I$.  We will first show that $I e I = I$, where $I e I$ is the ideal of $I$ given by
\begin{align*}
I e I := \setof{ e r + s e + \sum_{ i = 1}^{m} r_{i} e s_{i} }{ r, s, r_{i}, s_{i} \in I, m \in \N }.
\end{align*}
It is clear that $I e I \subseteq I$.  

Let $x \in I$.  Since $R e R = R$, we have that $x \in R e R$.  Since $u_{n} \in I$ for each $n \in \N$, we have that $u_{n} x u_{n} \in I e I$ for each $n \in \N$.  Since $\{ u_{n} \}_{ n = 1}^{ \infty }$ is an approximate unit of $I$, there exists $n \in \N$ such that $u_{n} x u_{n} = x$.  Therefore, $x = u_{n} x u_{n} \in I e I$.  Thus $I \subseteq IeI$, and $IeI = I$.

Let $J$ be the ideal of $R$ generated by $e I e$.  Since $eIe \subseteq I$ and $I$ is an ideal of $R$, we have that  $J \subseteq I$.  Let $x \in I$.  Since $IeI=I$, there exist $m \in \N$, $r, s, r_{i}, s_{i} \in I$ for $i = 1, 2, \dots, m$ such that 
\begin{align*}
x= e r + s e + \sum_{ i = 1}^{m} r_{i} e s_{i}.
\end{align*}
Since $er \in I$, there exists $n_{1} \in \N$ such that $u_{ n_{1} } er = er$.  Hence
$$ er = eer = e u_{n_1} e r \in (eIe)R \subseteq J.$$
Likewise, since $se \in I$, there exists $n_{2} \in \N$ such that $se u_{n_{2}} = se$.  Hence, 
$$ se = see = seu_{n_2} e \in R (eIe) \subseteq J.$$
Finally, for each $1 \leq i \leq m$, we have $e s_{i} \in I$ and hence there exists $k_{i} \in \N$ such that $u_{k_{i}} e s_{i} = e s_{i}$.  Hence
$$r_i e s_i = r_i e e s_i = r_i e u_{k_{i}} e s_{i} \in R (eIe) R \subseteq J.$$
Thus we have
\begin{align*}
x= e r + s e + \sum_{ i = 1}^{m} r_{i} e s_{i} \in J
\end{align*}
and we may conclude that $I \subseteq J$ and $I=J$.

Moreover, if every ideal of $R$ has a countable approximate unit consisting of idempotents, then it is straightforward to see that the map $\mu : \ilat ( R ) \to \ilat ( e R e )$ defined by $\mu (I) = eIe$ has inverse 
\begin{align*}
\rho : \ilat (eRe) \to \ilat (R)
\end{align*}
defined by $\rho (J)$ equals the ideal in $R$ generated by $J$; indeed, the fact that $\rho \circ \mu = \textrm{id}$ is the result of the first part of this proof, and the fact that $\mu \circ \rho = \textrm{id}$ is straightforward to verify.

In addition, $\mu$ preserves inclusion, and since any bijection between lattices that preserves inclusion is a lattice isomorphism, we may conclude that $\mu$ is a lattice isomorphism.
\end{proof}

\begin{lemma}\label{l:fcindmap}
Let $R$ be a ring containing a countable approximate unit consisting of idempotents.  If $e$ is an idempotent of $R$ such that $ReR = R$, then the homomorphism $\ftn{ \kalg_{n} ( \iota ) }{ \kalg_{n} ( e R e ) }{ \kalg_{n} (  R ) }$ induced by the inclusion $\ftn{ \iota } { e R e } { R }$ is an isomorphism for all $n \in \Z$.
\end{lemma}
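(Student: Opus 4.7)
The plan is to realize $R$ as the filtered colimit of its unital corners $u_k R u_k$ and, after showing that $e$ is a full idempotent in each such corner for $k$ large enough, reduce the claim to the classical fact that a full idempotent in a unital ring induces a Morita equivalence, hence an isomorphism on every algebraic $K$-group.

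Let $\{u_k\}_{k=1}^{\infty}$ be a countable approximate unit of idempotents for $R$. By Remark~\ref{sigma-unital-dir-lim-rem}, $R = \bigcup_{k=1}^{\infty} u_k R u_k$ is a directed union of unital subrings with units $u_k$, and since algebraic $K$-theory commutes with filtered colimits of rings, $\kalg_n(R) \cong \dirlim_{k} \kalg_n(u_k R u_k)$ for every $n \in \Z$. Because $e \in R$ and $\{u_k\}$ is an approximate unit, there exists $N$ with $u_k e = e u_k = e$ for every $k \geq N$; for such $k$, $e \in u_k R u_k$ and $e(u_k R u_k) e = e R e$.

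Next I would verify that $e$ is a \emph{full} idempotent in the unital ring $u_k R u_k$ for each $k \geq N$, that is, that the two-sided ideal of $u_k R u_k$ generated by $e$ equals $u_k R u_k$. Using $R e R = R$, write $u_k = \sum_{i=1}^m a_i e b_i$ for some $a_i, b_i \in R$. Since $u_k e = e u_k = e$ and $u_k u_k = u_k$, we get
\[
u_k \;=\; u_k u_k u_k \;=\; \sum_{i=1}^m (u_k a_i u_k)\, e\, (u_k b_i u_k) \;\in\; (u_k R u_k)\, e\, (u_k R u_k),
\]
where each term is rewritten using $u_k a_i e = u_k a_i u_k e$ on the left and $e b_i u_k = e u_k b_i u_k$ on the right, both consequences of $u_k e = e u_k = e$. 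This establishes fullness.

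Finally, I would invoke the classical result that if $A$ is a unital ring and $p \in A$ is a full idempotent, then the bimodules $Ap$ and $pA$ implement a Morita equivalence between $pAp$ and $A$, and hence the inclusion $pAp \hookrightarrow A$ induces an isomorphism $\kalg_n(pAp) \to \kalg_n(A)$ for every $n \in \Z$. Applied with $A = u_k R u_k$ and $p = e$ (for $k \geq N$), this yields isomorphisms $\kalg_n(e R e) = \kalg_n\bigl(e(u_k R u_k) e\bigr) \xrightarrow{\;\cong\;} \kalg_n(u_k R u_k)$ induced by inclusion. Functoriality then forces each transition map $\kalg_n(u_k R u_k) \to \kalg_n(u_{k+1} R u_{k+1})$ in the directed system to be an isomorphism for $k \geq N$, so $\kalg_n(u_N R u_N) \to \kalg_n(R)$ is an isomorphism; composing with $\kalg_n(eRe) \cong \kalg_n(u_N R u_N)$ produces the desired isomorphism $\kalg_n(\iota)$. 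The main obstacle is the fullness verification in the third step together with the invocation of Morita invariance of algebraic $K$-theory for \emph{all} $n \in \Z$, including Bass's negative $K$-groups; the former is the key new computation, and the latter, while folklore, must be cited carefully (e.g., via Bass's fundamental theorem together with the compatibility of Morita equivalence with suspension rings).
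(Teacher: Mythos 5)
Your proposal is correct and follows essentially the same route as the paper: write $R$ as the directed union of the unital corners given by the approximate unit, check that $e$ is a full idempotent in each corner containing it (the paper does this via the one-line computation $(e_kRe_k)e(e_kRe_k)=e_k(ReR)e_k=e_kRe_k$), invoke the unital full-corner/Morita invariance of $\kalg_n$ for all $n\in\Z$ (the paper cites \cite[Lemma~2.7]{abc:kthyleavitt}), and pass to the colimit using continuity of algebraic $K$-theory. The only cosmetic difference is that you keep the whole approximate unit and work with $k\geq N$, while the paper assumes without loss of generality that $e_ke=ee_k=e$ for all $k$.
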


\begin{proof}
If $R$ is unital, this is a well-known fact (see \cite[Lemma~2.7]{abc:kthyleavitt}).  If $R$ is not unital, we shall show that we can obtain the result from the unital case by a direct limit argument.

Let $\{ e_{k} \}_{ k = 1}^{ \infty }$ be a countable approximate unit for $R$ consisting of idempotents.  Without loss of generality, we may assume that $e_k e = ee_k = e$ for all $k \in \N$.  Also, $R = \bigcup_{k=1}^\infty e_k Re_k$ (see Remark~\ref{sigma-unital-dir-lim-rem}).  

Since $eRe = e_k(eRe)e_k$, we will define $\phi_k : eRe \to e_k Re_k$ to be the inclusion map with $\phi_k(x) := x$.  We see that $eRe$ is a full corner of $e_k R e_k$ because
$$ (e_kRe_k) e (e_kRe_k) = e_k R (e_kee_k) R e_k = e_k (ReR) e_k = e_k(R) e_k.$$
Thus, by  \cite[Lemma~2.7]{abc:kthyleavitt} for every $n \in \Z$, 
\begin{align*}
\kalg_n(\phi_k) : \kalg_n (eRe) \to \kalg_n (e_kRe_k)
\end{align*}
is an isomorphism.  If $\iota_{k,k+1} : e_k R e_k \to e_{k+1} R e_{k+1}$ is the inclusion map, we see that $\iota_{k, k+1} \circ \phi_k = \phi_{k+1}$.  Hence $$\kalg_n (\iota_{k, k+1} )\circ \kalg_n( \phi_k) =\kalg_n (\phi_{k+1}).$$
By the universal property of the direct limit the induced map from $\kalg_n (eRe)$ to $\varinjlim (\kalg_n (e_kRe_k), \kalg_n(\iota_{k,k+1})) = \kalg_n (R)$ is an isomorphism.  But this induced map is precisely $\kalg_n (\iota)$.
\end{proof}

\begin{lemma}\label{l:inducealgkthy}
Let $R$ and $S$ be rings such that every subquotient of $R$ satisfies excision in algebraic $K$-theory and every subquotient of $S$ satisfies excision in algebraic $K$-theory, and let $\ftn{ \phi }{ R }{ S }$ be a homomorphism.  Suppose the following two conditions are satisfied:
\begin{itemize}
\item[(1)] the map $\ftn{ \beta }{ \ilat ( R ) }{ \ilat ( S ) }$ defined by $$\beta( I ) := \text{the ideal in $S$ generated by $\phi ( I )$} $$ is a lattice isomorphism, and 
\item[(2)] for each $I_{1} , I_{2} \in \ilat ( R )$ with $I_{1} \subseteq I_{2}$, the homomorphism $\ftn{ \phi_{ I_{2} / I_{1} } }{ I_{2} /  I_{1}  }{ \beta(I_{2}) / \beta(I_{1}) }$ defined by $$  \phi_{ I_{2} / I_{1} }  ( x + I_{1}  ) := \phi ( x ) + \beta(I_{1})$$ induces an isomorphism $$\ftn{ \kalg_{n} ( \phi_{ I_{2} / I_{1} } ) }{ \kalg_{n} ( I_{2} /  I_{1}  )  }{ \kalg_{n} ( \beta(I_{2}) / \beta(I_{1}) ) }$$ for each $n \in \Z$ with $\kalg_{0} ( \phi_{ I_{2} / I_{1} } )$ also being an order isomorphism.
\end{itemize}
Then the collection
\begin{align*}
\setof{  \kalg_{n} ( \phi_{ I_{2} / I_{1} } )} { I_{1}, I_{2} \in \ilat ( R ), \ I_{1} \subseteq I_{2}, \text{ and } n \in \Z },
\end{align*}
together with the lattice isomorphism $\beta$, is an isomorphism of ideal-related  algebraic $K$-theory, and $\oikalg ( R ) \cong  \oikalg ( S )$.
\end{lemma}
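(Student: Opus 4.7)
The plan is to verify that the data $(\{\kalg_n(\phi_{I_2/I_1})\}, \beta)$ satisfies the three conditions in Definition~\ref{algKweb} defining an isomorphism $\oikalg(R) \cong \oikalg(S)$. Condition~(i) is exactly hypothesis~(1), and condition~(ii) --- that each $\kalg_n(\phi_{I_2/I_1})$ is a group isomorphism, with the $n=0$ case an order isomorphism --- is exactly hypothesis~(2). Thus the only remaining content of the proof is condition~(iii): showing that the collection preserves the natural transformations $\iota_*$, $\pi_*$, and $\partial$ between the $K$-groups of subquotients.

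To establish (iii), I would fix an arbitrary chain $I_1 \subseteq I_2 \subseteq I_3$ in $\ilat(R)$. Since $\phi$ is a ring homomorphism and $\phi(I_j) \subseteq \beta(I_j)$ for each $j$ by definition of $\beta$, the three induced maps $\phi_{I_2/I_1}$, $\phi_{I_3/I_1}$, $\phi_{I_3/I_2}$ assemble into a commutative ladder of short exact sequences
\begin{equation*}
\xymatrix{
0 \ar[r] & I_2/I_1 \ar[r] \ar[d]_{\phi_{I_2/I_1}} & I_3/I_1 \ar[r] \ar[d]_{\phi_{I_3/I_1}} & I_3/I_2 \ar[r] \ar[d]^{\phi_{I_3/I_2}} & 0 \\
0 \ar[r] & \beta(I_2)/\beta(I_1) \ar[r] & \beta(I_3)/\beta(I_1) \ar[r] & \beta(I_3)/\beta(I_2) \ar[r] & 0.
}
\end{equation*}
Applying $\kalg_n$ and invoking naturality of the long exact sequence provided by excision (which we have for both the top and bottom rows since every subquotient of $R$ and every subquotient of $S$ satisfies excision) then yields commuting squares between the two long exact sequences at each of the maps $\iota_*$, $\pi_*$, and $\partial$. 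This is precisely the assertion that the natural transformations of $\ikalg(R)$ are carried to those of $\ikalg(S)$.

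Once (i)--(iii) are verified, $\oikalg(R) \cong \oikalg(S)$ follows directly from Definition~\ref{algKweb}. The proof is essentially bookkeeping; the one genuine ingredient is naturality of the long exact sequence in algebraic $K$-theory, which is exactly what the standing excision hypothesis supplies. Accordingly, I do not expect any serious obstacles. The only subtlety worth pausing over is to confirm that the commutative ladder above is genuinely well-defined, which reduces to noting that each $\phi_{I_j/I_i}$ is induced from the single map $\phi$ by the uniform formula $x + I_i \mapsto \phi(x) + \beta(I_i)$, so that restrictions to ideals and passages to quotients commute with $\phi$ in the required manner.
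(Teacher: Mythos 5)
Your proposal is correct and follows essentially the same route as the paper's proof: both reduce the statement to checking that the natural transformations are preserved, and do so by forming the commutative ladder of short exact sequences for a chain $I_1 \subseteq I_2 \subseteq I_3$ and invoking naturality of the long exact sequence supplied by the excision hypothesis. No gaps.
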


\begin{proof}
Note that for $I_{1} , I_{2} , I_{3} \in \ilat( R )$ with $I_{1} \subseteq I_{2} \subseteq I_{3}$, we have a commutative diagram

$$
\xymatrix{
0 \ar[r] & I_{2} / I_{1} \ar[r] \ar[d]^{ \phi_{ I_{2} / I_{1} } } & I_{3}  /  I_{1}  \ar[r] \ar[d]^{ \phi_{ I_{3} / I_{1} } } &  I_{3}  /  I_{2}  \ar[r]  \ar[d]^{ \phi_{ I_{3} / I_{2} } } & 0 \\
0 \ar[r] & \beta(I_{2}) / \beta(I_{1}) \ar[r] & \beta(I_{3}) / \beta(I_{1}) \ar[r] & \beta(I_{3}) / \beta(I_{2}) \ar[r] & 0 
}
$$
Applying the functor $\kalg_n$ we obtain the following commutative diagram:

$ $

\scalebox{.75}{$$
\xymatrix{
{\kalg_{n}(I_{2}/ I_{1} ) } \ar[r] \ar[d]^{ \kalg_{n} ( \phi_{ I_{2} / I_{1} } ) }&{ \kalg_{n}(I_{3} / I_{1})}  \ar[r] \ar[d]^{ \kalg_{n} ( \phi_{ I_{3} / I_{1} } )} & {\kalg_{n}(I_{3}/I_{2}) }\ar[r] \ar[d]^{ \kalg_{n} ( \phi_{ I_{3} / I_{2} } ) } & {\kalg_{n-1} (I_{2}/ I_{1})} \ar[d]^{ \kalg_{n-1} ( \phi_{ I_{2} / I_{1} } ) } \\
{ \kalg_{n}( \beta( I_{2} ) / \beta( I_{1} ) ) } \ar[r] &{ \kalg_{n}( \beta( I_{3} ) / \beta( I_{1} ) ) }\ar[r] &{ \kalg_{n}( \beta( I_{3} ) / \beta( I_{2} ) ) }\ar[r]  & { \kalg_{n-1} ( \beta( I_{2} ) / \beta( I_{1} ) )}
}
$$
}

$ $

$ $

\noindent Thus $\beta$ and the collection of $\kalg_{n} ( \phi_{ I_{2} / I_{1} } )$ for all $n \in \Z$ and $I_{1}, I_{2} \in \ilat ( R )$ with $I_{1} \subseteq I_{2}$ give an isomorphism of ideal-related algebraic $K$-theories, and $\oikalg ( R ) \cong  \okalg ( S )$. 
\end{proof}

We have a similar result for $C^*$-algebras.

\begin{lemma}\label{l:inducetopkthy}
Let $\mathfrak{A}$ and $\mathfrak{B}$ be $C^{*}$-algebras, and let $\ftn{ \phi }{ \mathfrak{A} }{ \mathfrak{B} }$ be a homomorphism.   Suppose
\begin{itemize}
\item[(1)] the map $\ftn{ \beta }{ \ilatcl ( \mathfrak{A} ) }{ \ilatcl ( \mathfrak{B} ) }$ defined by $$\beta( \mathfrak{I} ) := \text{the ideal in $\mathfrak{B}$ generated by $\phi ( \mathfrak{I} )$} $$ is a lattice isomorphism; and 

\item[(2)] for each $\mathfrak{I}_{1} , \mathfrak{I}_{2} \in \ilat ( \mathfrak{A} )$ with $\mathfrak{I}_{1} \subseteq \mathfrak{I}_{2}$ the homomorphism $\ftn{ \phi_{ \mathfrak{I}_{2} / \mathfrak{I}_{1} } }{ \mathfrak{I}_{2} /  \mathfrak{I}_{1}  }{ \beta(\mathfrak{I}_{2}) / \beta(\mathfrak{I}_{1}) }$ defined by 
$$ \phi_{ \mathfrak{I}_{2} / \mathfrak{I}_{1} } (x + \mathfrak{I}_{1})  := \phi ( x ) + \beta(\mathfrak{I}_{1})$$
induces an isomorphism 
$$\ftn{ \ktop_{n} ( \phi_{ \mathfrak{I}_{2} / \mathfrak{I}_{1} } ) }{ \ktop_{n} ( \mathfrak{I}_{2} /  \mathfrak{I}_{1}  )  }{ \ktop_{n} ( \beta(\mathfrak{I}_{2}) / \beta(\mathfrak{I}_{1}) ) }$$ for each $n =0,1$ with $\ktop_{0} ( \phi_{ \mathfrak{I}_{2} / \mathfrak{I}_{1} } )$ also being an order isomorphism.
\end{itemize}  
Then the collection
\begin{align*}
\setof{ \ktop_{n} ( \phi_{ \mathfrak{I}_{2} / \mathfrak{I}_{1} } )} { \mathfrak{I}_{1}, \mathfrak{I}_{2} \in \ilatcl ( \mathfrak{A} ), \ \mathfrak{I}_{1} \subseteq \mathfrak{I}_{2}\ , \text{ and } n=0,1}
\end{align*}
with the lattice isomorphism $\beta$ gives an isomorphism of ideal-related topological $K$-theories, and
$\oiktop ( \mathfrak{A} ) \cong  \oiktop ( \mathfrak{B} )$.
\end{lemma}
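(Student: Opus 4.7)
The plan is to mirror the proof of Lemma~\ref{l:inducealgkthy} essentially verbatim, replacing the long exact sequence in algebraic $K$-theory (which required the excision hypothesis) with the standard six-term exact sequence in topological $K$-theory, which holds unconditionally for every short exact sequence of $C^*$-algebras.

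First I would note that for any three closed ideals $\mathfrak{I}_{1} \subseteq \mathfrak{I}_{2} \subseteq \mathfrak{I}_{3}$ in $\ilatcl(\mathfrak{A})$, the hypothesis that $\beta$ is well-defined and the formula $\phi_{\mathfrak{I}_{2}/\mathfrak{I}_{1}}(x + \mathfrak{I}_{1}) := \phi(x) + \beta(\mathfrak{I}_{1})$ give a commutative ladder of short exact sequences of $C^{*}$-algebras
\[
\xymatrix{
0 \ar[r] & \mathfrak{I}_{2} / \mathfrak{I}_{1} \ar[r] \ar[d]^{ \phi_{ \mathfrak{I}_{2} / \mathfrak{I}_{1} } } & \mathfrak{I}_{3}  /  \mathfrak{I}_{1}  \ar[r] \ar[d]^{ \phi_{ \mathfrak{I}_{3} / \mathfrak{I}_{1} } } &  \mathfrak{I}_{3}  /  \mathfrak{I}_{2}  \ar[r]  \ar[d]^{ \phi_{ \mathfrak{I}_{3} / \mathfrak{I}_{2} } } & 0 \\
0 \ar[r] & \beta(\mathfrak{I}_{2}) / \beta(\mathfrak{I}_{1}) \ar[r] & \beta(\mathfrak{I}_{3}) / \beta(\mathfrak{I}_{1}) \ar[r] & \beta(\mathfrak{I}_{3}) / \beta(\mathfrak{I}_{2}) \ar[r] & 0.
}
\]
Next, applying $\ktop_{*}$ and invoking naturality of the six-term exact sequence, I obtain a commuting diagram connecting the six-term exact sequences of the two rows via the vertical maps $\ktop_{n}(\phi_{\mathfrak{I}_{j}/\mathfrak{I}_{i}})$ for $n = 0, 1$; in particular, the boundary maps $\partial$ intertwine the family $\{\ktop_{n}(\phi_{\mathfrak{I}_{2}/\mathfrak{I}_{1}})\}$.

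By hypothesis~(2), each such vertical map is a group isomorphism, and each $\ktop_{0}(\phi_{\mathfrak{I}_{2}/\mathfrak{I}_{1}})$ is an order isomorphism. Thus, checking the three conditions of Definition~\ref{topKweb} in turn, the pair
\[
\psi := \left(\{\ktop_{n}(\phi_{\mathfrak{I}_{2}/\mathfrak{I}_{1}}) : \mathfrak{I}_{1} \subseteq \mathfrak{I}_{2} \text{ in } \ilatcl(\mathfrak{A}),\ n = 0, 1\},\ \beta\right)
\]
constitutes an isomorphism $\oiktop(\mathfrak{A}) \cong \oiktop(\mathfrak{B})$.

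I do not expect any genuine obstacle: the topological $K$-theory functor furnishes natural six-term exact sequences for free, so no analog of the excision verification (Lemmas~\ref{l:excisionlocalunit} and~\ref{l:sqlocalunits}) is needed. The only ingredient that must be cited carefully is naturality of the index and exponential boundary maps under $*$-homomorphisms of short exact sequences, which is a standard property of $\ktop_{*}$.
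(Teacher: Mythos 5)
Your proposal is correct and is essentially the paper's own argument: the authors likewise prove this by observing that every $C^*$-algebra satisfies excision in topological $K$-theory and then repeating the proof of Lemma~\ref{l:inducealgkthy} verbatim, with the naturality of the six-term exact sequence playing the role of the algebraic long exact sequences.
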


\begin{proof}
Note that every $C^{*}$-algebra satisfies excision in topological $K$-theory.  Using the same argument as in the proof of Lemma~\ref{l:inducealgkthy} we get the desired result. 
\end{proof}

\begin{defin}
We establish notation and terminology for the situations in Lemma~\ref{l:inducealgkthy} and Lemma~\ref{l:inducetopkthy}.

Let $R$ and $S$ be rings such that every subquotient of $R$ satisfies excision in algebraic $K$-theory and every subquotient of $S$ satisfies excision in algebraic $K$-theory.  If $\ftn{ \phi }{ R }{ S }$ is a homomorphism satisfying the assumptions of Lemma~\ref{l:inducealgkthy}, we write $$[\phi] := \left( \left\{  \kalg_{n} ( \phi_{ I_{2} / I_{1} } ) : I_1, I_2 \in \ilat(R) \text{ with } I_1 \subseteq I_2, n\in \Z \right\}, \beta \right)$$
and say that $\phi$ induces an isomorphism $\ftn{ [ \phi ] }{ \oikalg ( R ) }{ \oikalg ( S ) }$. 

Let $\mathfrak{A}$ and $\mathfrak{B}$ be $C^{*}$-algebras.  If $\ftn{ \phi }{ \mathfrak{A} }{ \mathfrak{B} }$ is a homomorphism satisfying the assumptions in Lemma~\ref{l:inducetopkthy}, then we write 
$$[\phi] := \left( \left\{  \ktop_{n} ( \phi_{ \mathfrak{I}_{2} / \mathfrak{I}_{1} } ) : \mathfrak{I}_1, \mathfrak{I}_2 \in \ilatcl(\mathfrak{A}) \text{ with } \mathfrak{I}_1 \subseteq \mathfrak{I}_2, n =0,1 \right\}, \beta \right)$$
and say that $\phi$ induces an isomorphism $\ftn{ [ \phi ] }{ \oiktop ( \mathfrak{A} ) }{ \oiktop ( \mathfrak{B} ) }$.
\end{defin}

\begin{lemma}\label{l:kthylocalunits}
Let $R$ be a ring with a countable approximate unit consisting of idempotents.  If $x \in \kalg_{0} ( R )$, then there exist idempotents $e \in \mathsf{M}_{k} ( R )$ and $f \in \mathsf{M}_{k} ( R )$ such that $x = [ e ] - [ f ]$.
\end{lemma}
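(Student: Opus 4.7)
The plan is to realize $R$ as a filtered colimit of unital subrings and then pull $x$ back to one of them. Let $\{e_n\}_{n=1}^\infty$ denote the given countable approximate unit of idempotents. By Remark~\ref{sigma-unital-dir-lim-rem}, one has $R = \bigcup_{n=1}^\infty e_n R e_n$, with $e_1 R e_1 \subseteq e_2 R e_2 \subseteq \cdots$ and each $e_n R e_n$ a unital subring whose unit is $e_n$. Hence $R$ is the filtered colimit of the directed system of unital rings $(e_n R e_n)_{n \in \N}$ along the natural inclusions.

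I would then appeal to the standard fact that algebraic $K_0$ commutes with filtered colimits of (not necessarily unital) rings, which can be deduced from the fact that the unitization functor commutes with filtered colimits together with the analogous commutation for unital rings. This yields $\kalg_0(R) \cong \varinjlim_n \kalg_0(e_n R e_n)$, so the given $x \in \kalg_0(R)$ is, for some sufficiently large $n$, the image of a class $x_n \in \kalg_0(e_n R e_n)$ under the canonical map induced by the inclusion $e_n R e_n \hookrightarrow R$. Since $e_n R e_n$ is unital, the standard description of $K_0$ for a unital ring writes $x_n = [e] - [f]$ for some $k \in \N$ and idempotents $e, f \in \mathsf{M}_k(e_n R e_n)$.

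Under the inclusion $\mathsf{M}_k(e_n R e_n) \subseteq \mathsf{M}_k(R)$, both $e$ and $f$ remain idempotents in $\mathsf{M}_k(R)$, and by naturality the image of $x_n$ in $\kalg_0(R)$ is precisely the class $[e] - [f]$ computed there, giving $x = [e] - [f]$ as required. The one place where some care is needed is this naturality step: since $\kalg_0(R)$ is defined via the unitization $\widetilde{R}$ as $\ker(\kalg_0(\widetilde{R}) \to \kalg_0(\Z))$, one must verify that for idempotents $e, f \in \mathsf{M}_k(R)$ the class $[e] - [f] \in \kalg_0(\widetilde{R})$ indeed lies in $\kalg_0(R)$ (immediate, as $e$ and $f$ have zero scalar part) and that it agrees with the image of the unital class $[e] - [f] \in \kalg_0(e_n R e_n)$ under the map induced by $\widetilde{e_n R e_n} \to \widetilde{R}$, which sends $e$ and $f$ to themselves. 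I expect the filtered-colimit continuity of $\kalg_0$ and this bookkeeping around unitizations to be the only mild obstacles; no deep input is needed.
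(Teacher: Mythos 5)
Your proof is correct and follows essentially the same route as the paper: write $R=\bigcup_n e_nRe_n$ as a direct limit of unital rings, use continuity of $\kalg_0$ to pull $x$ back to some $\kalg_0(e_nRe_n)$, express it there as a difference of idempotent classes, and push forward along the inclusion. The extra bookkeeping you note about unitizations and naturality is fine and is simply taken for granted in the paper's argument.
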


\begin{proof}
If $R$ is unital, then this is clear by the definition of $\kalg_{0} ( R )$.  Suppose $R$ is a ring with a countable approximate unit consisting of idempotents $\{ e_n \}_{n=1}^\infty$.  Let $\ftn{ \iota_{n, n+1} }{ e_{n} R e_{n} }{ e_{n+1} R e_{n+1} }$ and $\ftn{ \iota_{n} }{ e_{n} R e_{n} }{ R }$ be the inclusion homomorphisms.  Then $R \cong \varinjlim ( e_{n} R e_{n} , \iota_{n,n+1} )$, and by continuity of $K$-theory
\begin{align*}
\kalg_{0} ( R ) \cong \varinjlim ( \kalg_{0} ( e_{n} R e_{n} ) , \kalg_{0} ( \iota_{n, n+1}  ) ).
\end{align*}
Let $\ftn{ \alpha }{  \varinjlim ( \kalg_{0} ( e_{n} R e_{n} ) , \kalg_{0} ( \iota_{n , n+1 } ) ) }{ \kalg_{0} ( R ) } $ denote this isomorphism and let $\ftn{ \psi_{n} }{ \kalg_{0} ( e_{n} R e_{n} ) }{ \varinjlim ( \kalg_{0} ( e_{n} R e_{n} ) , \kalg_{0} ( \iota_{n , n+1 } ) ) }$ be the natural map to the inductive limit.  Then for each $n \in \N$ we have $\alpha \circ \psi_{n} = \kalg_{0} ( \iota_{n} )$. 

Let $x \in \kalg_{0} ( R )$.  Then there exists $y \in\varinjlim ( \kalg_{0} ( e_{n} R e_{n} ) , \kalg_{0} ( \iota_{n , n+1 } ) ) $ such that $\alpha ( y ) = x$.  Hence, there exist $n \in \N$ and $y_{n} \in \kalg_{0} ( e_{n} R e_{n} )$ such that $\psi_{n} ( y_{n} ) = y$.  Since $e_{n} R e_{n}$ is a unital ring, $y_{n} = [ a _{n} ] - [ b_{n} ]$ for some idempotents $a_{n}, b_{n} \in \mathsf{M}_{k} ( e_{n} R e_{n} )$.  Hence, if we let $e = \iota_{n} ( a_{n} )$ and $f = \iota_{n} ( b_{n} )$, then
\begin{align*}
x &= \alpha ( y ) = ( \alpha \circ \psi_{n} ) ( y_{n} ) = \kalg_{0} ( \iota_{n} ) ( [a_{n}] - [ b_{n} ] ) = [ e ] - [ f ].
\end{align*}   
\end{proof}

\begin{lemma}\label{l:projfullcorner}
Let $R$ be a ring and let $e$ be an idempotent such that $R e R = R$.  Let $I_{1}, I_{2} \in \ilat ( R )$ with $I_{1} \subseteq I_{2}$.  Then for every idempotent $p \in \mathsf{M}_{n} ( I_{2} / I_{1} )$, there exists an idempotent $q \in \mathsf{M}_{k} ( e I_{2} e / e I_{1} e )$ such that $[ p ] = [ q ]$ in $\kalg_{0} ( I_{2} / I_{1} )$.
\end{lemma}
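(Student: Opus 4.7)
The plan is to reproduce the standard Morita/full-corner trick, carefully keeping track of which ideal the matrix entries land in. First I would pass to the quotient: set $\bar R := R/I_{1}$, $B := I_{2}/I_{1}$, and $\bar e := e+I_{1}$, so that $\bar R\bar e\bar R=\bar R$ and $B$ is an ideal in $\bar R$. A short direct check (if $exe\in I_{1}$ with $x\in I_{2}$ then $exe=e(exe)e\in eI_{1}e$) shows $eI_{2}e\cap I_{1}=eI_{1}e$, so $eI_{2}e/eI_{1}e\cong\bar e B\bar e$ as rings. It therefore suffices to produce an idempotent $q$ in $\mathsf{M}_{N}(\bar e B\bar e)$ with $[p]=[q]$ in $\kalg_{0}(B)$.

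Next I would factor $p$. For each pair $(k,l)$ use $\bar R\bar e\bar R=\bar R$ to write $p_{kl}=\sum_{\mu=1}^{m_{kl}} c^{kl}_{\mu}\bar e\, d^{kl}_{\mu}$ with $c^{kl}_{\mu},d^{kl}_{\mu}\in\bar R$. Since $p=p^{3}$ as a matrix identity, and since $\bar e^{\,2}=\bar e$, this gives
\begin{equation*}
p_{ij}=\sum_{k,l}p_{ik}p_{kl}p_{lj}=\sum_{k,l,\mu}\bigl(p_{ik}c^{kl}_{\mu}\bar e\bigr)\bigl(\bar e\, d^{kl}_{\mu}p_{lj}\bigr).
\end{equation*}
Let $N$ be the total number of triples $(k,l,\mu)$ and define rectangular matrices $\alpha\in\mathsf{M}_{n,N}(B)$ and $\beta\in\mathsf{M}_{N,n}(B)$ by $\alpha_{i,(k,l,\mu)}:=p_{ik}c^{kl}_{\mu}\bar e$ and $\beta_{(k,l,\mu),j}:=\bar e\, d^{kl}_{\mu}p_{lj}$; these matrices indeed have entries in $B$ because $B$ is a two-sided ideal in $\bar R$ and each entry carries the factor $p_{ik}$ or $p_{lj}$. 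By construction $\alpha\beta=p$, and I set $q:=\beta\alpha\in\mathsf{M}_{N}(\bar e B\bar e)$, where the membership in the corner is immediate since every entry of $q$ both begins and ends with $\bar e$.

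To finish I would verify that $q$ is idempotent and Murray--von Neumann equivalent to $p$. The identity $p\alpha=\alpha$ follows from $p=p^{2}$ together with the explicit form of $\alpha$, and hence $q^{2}=\beta(\alpha\beta)\alpha=\beta p\alpha=\beta\alpha=q$. The equations $\alpha\beta=p$ and $\beta\alpha=q$ with $\alpha,\beta$ over $B$ then yield $[p]=[q]$ in $\kalg_{0}(B)$ by the standard MvN argument (carried out, e.g., in the unitization of $B$). Transporting via $\bar e B\bar e\cong eI_{2}e/eI_{1}e$ produces the required idempotent in $\mathsf{M}_{N}(eI_{2}e/eI_{1}e)$. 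The main subtlety to watch for is that factoring by means of $p=p^{2}$ alone would leave the outer entries of $\alpha$ and $\beta$ in $\bar R$ rather than in $B$, so the equivalence $[p]=[q]$ would only hold in $\kalg_{0}(\bar R)$; sandwiching the decomposition of $p_{kl}$ between two additional copies of $p$ is exactly the step that pushes those outer entries into the ideal $B$ and makes the $K$-theoretic equality take place where we need it.
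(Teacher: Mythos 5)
Your proof is correct and follows essentially the same route as the paper's: pass to $R/I_{1}$, use fullness of $\bar e$ and sandwich the resulting decomposition of $p$ between copies of $p$ so that the outer factors land in $I_{2}/I_{1}$, then compare the two products of a rectangular pair of matrices over $I_{2}/I_{1}$ and transport the result through the identification of $\bar e (I_{2}/I_{1})\bar e$ with $eI_{2}e/eI_{1}e$. The only difference is cosmetic: your $q=\beta\alpha$ is an idempotent on the nose (since $p\alpha=\alpha$), whereas the paper does not verify this for its $wz$ and instead passes to $(wz)^{2}$ before invoking the same $[xy]=[yx]$ argument in $\kalg_{0}(I_{2}/I_{1})$.
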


\begin{proof}
Let $I_{1}, I_{2} \in \ilat ( R )$ such that $I_{1} \subseteq I_{2}$ and let $p$ be an idempotent in $\mathsf{M}_{n} ( I_{2} / I_{1} )$.  Since $R e R = R$, we have $( R / I_{1} ) \overline{e} ( R / I_{1}) = R/ I_1$, where $\overline{e} = e+I_{1}  \in R / I_{1}$.  For each $n \in \N$, define $\overline{e}_{n} = \diag ( \overline{e} , \dots, \overline{e}  ) \in \mathsf{M}_{n} ( R / I )$.

Since $( R / I_{1} ) \overline{e} ( R / I_{1}) = R /I_1$, we have $\mathsf{M}_{n} ( R /I_{1} ) \overline{e}_{n} \mathsf{M}_{n} ( R / I_{1} ) = \mathsf{M}_{n} ( R / I_{1} )$ for all $n \in \N$.  Hence, $p \in \mathsf{M}_{n} ( R /I_{1} ) \overline{e}_{n} \mathsf{M}_{n} ( R / I_{1} )$ and  there exist $$x_{1}, \dots, x_{m} , y_{1}, \dots, y_{m} \in \mathsf{M}_{n} ( R / I_{1} )$$ and $\ell \in \Z$ with $\ell \geq 0$ such that 
\begin{align*}
p = \ell \overline{e}_{n} + x_{1} \overline{e}_{n} + \overline{e}_{n} y_{1} + \sum_{ i = 2}^{m} x_{i} \overline{e}_{n } y_{i}. 
\end{align*}   
Since $p$ is an idempotent, 
\begin{align*}
p = \ell p \overline{e}_{n} p + p x_{1} \overline{e}_{n} p + p \overline{ e}_{n} y_{1} p + \sum_{ i = 2}^{m} p x_{i} \overline{e}_{n} y_{i} p
\end{align*}
Then, since $p \in M_n(I_2/I_1)$, by grouping terms we may write  
\begin{align*}
p = \sum_{ i = 1}^{\ell + m +1 } s_{i} \overline{e}_{n} t_{i}.
\end{align*}
for $s_i, t_i \in M_n(I_2/I_1)$. Set 
\begin{align*}
s = 
\begin{bmatrix}
s_{1}& s_{2} & \dots & s_{\ell+m+1} \\
0 	& 0 & \dots & 0 \\
\vdots & \vdots &  & \vdots \\
0 & 0 & \dots & 0
\end{bmatrix}
\quad \text{and} \quad 
t = \begin{bmatrix}
t_{1}  & 0 & \dots & 0  \\
t_{2}  	& 0 & \dots & 0 \\
\vdots & \vdots &  & \vdots \\
t_{\ell+m+1}  & 0 & \dots & 0
\end{bmatrix}
\end{align*}
Then $s, t \in \mathsf{M}_{n(\ell+ m+1 ) } ( I_{2} / I_{1} )$ such that 
\begin{align*}
\begin{bmatrix}
p & 0 & \dots & 0 \\
0 & 0 & \dots & 0 \\
\vdots & \vdots  & \vdots \\
0 & 0 & \dots & 0
\end{bmatrix} = s \overline{e}_{n(\ell + m+1)} t
\end{align*}
Let $z = s \overline{e}_{n(\ell+m+1)}$ and $w = \overline{e}_{n(\ell+m+1)} t$.  Then $zw = \diag (p,0, \ldots 0)$ is an idempotent.  While we do not know if $wz$ is an idempotent, we do see that $(wz)^2 = w(zw)z = w(zw)^3z = (wz)^4$, so that $(wz)^2$ is an idempotent.  Moreover, $(zw)^2 = zwzw \sim wzwz = (wz)^2$ so that
$$[p] = [p^2] = [(zw)^2] = [(wz)^2]$$ in $\kalg_{0} ( I_{2} / I_{1} )$.  Since $(wz)^2 =  (\overline{e}_{n(\ell+m+1)} t s \overline{e}_{n(\ell+m+1)})^2$ we have that $$(wz)^2 \in  \overline{e}_{n( \ell+m+1) }  \mathsf{M}_{n ( \ell + m +1) } (  I_{2} / I_{1} ) \overline{e}_{n( \ell+m+1) } = \mathsf{M}_{ n ( \ell + m +1 ) } ( \overline{e} (I_{2} / I_{1}) \overline{e} ).$$ 
Since the map $\ftn{ \psi }{ e I_{2} e / e I_{1} e }{ \overline{e} (I_{2} / I_{1}) \overline{e} }$ defined by $\psi ( x + e I_{1} e ) = \overline{e} ( x + I_{1} ) \overline{e}$ is an isomorphism, the lemma follows.
\end{proof}

\begin{propo}\label{p:algkthyfullcorner}
Let $R$ be a ring and let $e \in R$ be an idempotent such that $R e R = R$.  Suppose every ideal of $R$ has a countable approximate unit consisting of idempotents, and every ideal of $eRe$ has a countable approximate unit consisting of idempotents.  Then the inclusion $\ftn{ \iota }{ e R e }{ R }$ induces an isomorphism from $\oikalg ( eRe )$ to $\oikalg ( R )$.  
\end{propo}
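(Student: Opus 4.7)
The plan is to verify the hypotheses of Lemma~\ref{l:inducealgkthy} for the inclusion homomorphism $\iota \colon eRe \to R$. The induced ideal map $\beta(J) :=$ (ideal in $R$ generated by $J$) is a lattice isomorphism $\ilat(eRe) \to \ilat(R)$ by the ``moreover'' clause of Lemma~\ref{l:cornerlocalunit}, since by hypothesis every ideal of $R$ has a countable approximate unit of idempotents. Thus the real work is to show, for each pair $J_1 \subseteq J_2$ in $\ilat(eRe)$, that the induced homomorphism $\iota_{J_2/J_1}\colon J_2/J_1 \to \beta(J_2)/\beta(J_1)$ gives isomorphisms on $\kalg_n$ for every $n \in \Z$, with $\kalg_0(\iota_{J_2/J_1})$ also an order isomorphism.

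I would first handle the base case $J_1 = 0$, that is, show that $J_2 \hookrightarrow \beta(J_2)$ induces an isomorphism on $\kalg_n$ for every $n$. Choose an approximate unit $\{g_k\}_{k=1}^{\infty}$ of $J_2$ consisting of idempotents; since $g_k \in J_2 \subseteq eRe$ we have $g_k = e g_k = g_k e$, hence $g_k R g_k = g_k (eRe) g_k$. Put $I_k := R g_k R \in \ilat(R)$. A direct calculation shows $I_k \subseteq I_{k+1}$ (using $g_k = g_{k+1} g_k g_{k+1}$) and $\bigcup_k I_k = \beta(J_2)$: ``$\subseteq$'' is immediate from $g_k \in \beta(J_2)$, while ``$\supseteq$'' follows because the approximate-unit property gives $J_2 \subseteq \bigcup_k R g_k R$, forcing $\beta(J_2) \subseteq \bigcup_k I_k$; similarly, $\bigcup_k g_k R g_k = J_2$. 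For fixed $k$ one verifies $I_k g_k I_k = I_k$ by writing every $r g_k s \in R g_k R$ as $(r g_k) g_k (g_k s)$, so Lemma~\ref{l:fcindmap} applies to the ring $I_k$ (which, being an ideal of $R$, carries a countable approximate unit of idempotents) with idempotent $g_k$, yielding an isomorphism $\kalg_n(g_k R g_k) \cong \kalg_n(I_k)$ for every $n \in \Z$. Continuity of algebraic $K$-theory under direct limits of rings then gives the desired isomorphism $\kalg_n(J_2) \cong \kalg_n(\beta(J_2))$.

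Next, I would combine the isomorphisms for $J_1 \hookrightarrow \beta(J_1)$ and $J_2 \hookrightarrow \beta(J_2)$ via the five lemma applied to the ladder of long exact sequences coming from the extensions $0 \to J_1 \to J_2 \to J_2/J_1 \to 0$ and $0 \to \beta(J_1) \to \beta(J_2) \to \beta(J_2)/\beta(J_1) \to 0$. These long exact sequences are available because each of $J_i$, $J_2/J_1$, $\beta(J_i)$, and $\beta(J_2)/\beta(J_1)$ carries a countable approximate unit of idempotents---for the quotients, simply project the approximate unit of the numerator---and hence satisfies excision by Lemma~\ref{l:excisionlocalunit}. The five lemma then delivers that $\kalg_n(\iota_{J_2/J_1})$ is an isomorphism for every $n \in \Z$.

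Finally, for the order structure at $n = 0$, I would invoke Lemma~\ref{l:projfullcorner} applied to $R$, $e$, and the ideal pair $\beta(J_1) \subseteq \beta(J_2)$: any projection class in $\kalg_0(\beta(J_2)/\beta(J_1))$ coincides with that of a projection in $\mathsf{M}_k(e\beta(J_2)e / e \beta(J_1) e)$, and by Lemma~\ref{l:cornerlocalunit} we have $e \beta(J_i) e = J_i$. Thus the positive cone of $\kalg_0(\beta(J_2)/\beta(J_1))$ equals the image of the positive cone of $\kalg_0(J_2/J_1)$, and combined with the already-established group isomorphism this gives the order isomorphism. Lemma~\ref{l:inducealgkthy} now produces the required isomorphism $\oikalg(eRe) \cong \oikalg(R)$. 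The main obstacle is the base case: realizing $\beta(J_2)$ as a filtered union of ideals $I_k$ in which $g_k R g_k$ sits as a full corner with $g_k \in I_k$ an idempotent, which is precisely the setup that makes Lemma~\ref{l:fcindmap} applicable and reduces the whole problem to a direct-limit argument.
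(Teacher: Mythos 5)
Your proposal is correct, and it verifies the hypotheses of Lemma~\ref{l:inducealgkthy} along the same general skeleton as the paper: Lemma~\ref{l:cornerlocalunit} for the lattice isomorphism $\beta$, Lemma~\ref{l:fcindmap} as the $K$-theoretic engine, a five-lemma ladder for general pairs $J_1 \subseteq J_2$, and Lemma~\ref{l:projfullcorner} (with $e\beta(J_i)e = J_i$) for the order statement at $\kalg_0$. Where you genuinely diverge is the base case $J_1 = 0$. The paper first proves a ``Case 1'' for pairs $(I_1, eRe)$ by identifying $eRe/I_1$ with the full corner $\overline{e}\,(R/\beta(I_1))\,\overline{e}$ of the quotient ring and applying Lemma~\ref{l:fcindmap} there, and only then gets $\kalg_n(I_2) \cong \kalg_n(\beta(I_2))$ by a five-lemma argument comparing the extensions $0 \to I_2 \to eRe \to eRe/I_2 \to 0$ and $0 \to \beta(I_2) \to R \to R/\beta(I_2) \to 0$. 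You instead prove $\kalg_n(J_2) \cong \kalg_n(\beta(J_2))$ directly by filtering: writing $J_2 = \bigcup_k g_kRg_k$ and $\beta(J_2) = \bigcup_k Rg_kR$ via an idempotent approximate unit $\{g_k\}$ of $J_2$, checking $g_kRg_k$ is a full corner of the ideal $I_k = Rg_kR$ (which inherits an idempotent approximate unit, so Lemma~\ref{l:fcindmap} applies), and passing to the colimit. Your route avoids the quotient-corner identification and one application of the five lemma, at the cost of invoking continuity of $\kalg_n$ under filtered colimits of (nonunital) rings and the bookkeeping that the unions are increasing unions of ideals; this continuity is the same tool the paper already uses inside the proof of Lemma~\ref{l:fcindmap}, so no new machinery is needed, and the excision/long-exact-sequence inputs you cite (Lemma~\ref{l:excisionlocalunit} applied to $J_i$, $\beta(J_i)$ and their quotients, with approximate units projected from the numerators) are exactly as in the paper. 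Both arguments are sound; the paper's is slightly more economical in algebraic bookkeeping, while yours keeps all applications of Lemma~\ref{l:fcindmap} inside honest ideals of $R$ rather than quotient rings.
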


\begin{proof}
First note that since every ideal $I$ of $R$ has a countable approximate unit consisting of idempotents, Lemma~\ref{l:excisionlocalunit} implies that every subquotient of $R$ satisfies excision in algebraic $K$-theory.  By Lemma~\ref{l:cornerlocalunit}, the map $\ftn{ \beta }{ \ilat ( eRe ) }{ \ilat ( R  ) }$ given by $$\beta ( I ) =  \text{the ideal in $R$ generated by $I$}$$ is a lattice isomorphism.  We will now show that for each $I_{1}, I_{2} \in \ilat ( R )$ with $I_{1} \subseteq I_{2}$, 
\begin{align*}
\ftn{ \kalg_{n} ( \iota_{ I_{2} / I_{1} } ) }{ \kalg_{n} ( I_{2} / I_{1} ) }{ \kalg_{n} ( \beta(I_{2}) / \beta(I_{1}) ) }
\end{align*}
is an isomorphism for each $n \in \Z$ with $\kalg_0 ( \iota_{ I_{2} / I_{1} } )$ also being an order isomorphism.  

\medskip

\noindent \emph{Case 1:}  Suppose $I_2 = eRe$ and $I_1$ is any ideal of $eRe$. Let $\overline{e} := e + \beta (I_1) \in R / \beta(I_1)$.  Then $I_{1} = e \beta(I_1) e$. We see that $eRe / I_1 \cong \overline{e} ( R / \beta(I_1) ) \overline{e}$, and composing this isomorphism with the inclusion $\overline{e} ( R / \beta(I_1) ) \overline{e} \hookrightarrow R / \beta(I_1)$ gives $\iota_{I_2/I_1}$.  That each $\kalg_n ( \iota_{ I_{2} / I_{1} } )$ is an isomorphism follows from Lemma~\ref{l:fcindmap}, and that $\kalg_0 ( \iota_{ I_{2} / I_{1} } )$ is also an order isomorphism follows from Lemma~\ref{l:projfullcorner}.

\medskip

\noindent \emph{Case 2:} Suppose $I_{1} = 0$ and $I_{2}$ is any ideal of $eRe$.
Note that the diagram
\begin{align*}
\xymatrix{
0 \ar[r] & I_{2} \ar[r] \ar[d]^{ \iota_{ I_{2} / 0 } } & e R e \ar[r] \ar[d]^{ \iota_{ eRe/ 0 } } & e R e / I_2 \ar[r]  \ar[d]^{ \iota_{ eRe/I_{2} } }& 0 \\
0 \ar[r] & \beta(I_{2})  \ar[r] &  R  \ar[r] &  R  /  \beta(I_{2}) \ar[r] & 0 
}
\end{align*}  
commutes, and induces the following commutative diagram
\begin{align*}
\scalebox{.8}{ 
\xymatrix{
\kalg_{n+1} ( e R e ) \ar[r] \ar[d]^{ \kalg_{n+1} ( \iota_{ eRe / 0 } ) } &  \kalg_{n+1} ( e R e / I_{2} ) \ar[d]^{ \kalg_{n} ( \iota_{eRe / I_{2} } ) } \ar[r] & \kalg_{n} ( I_{2} ) \ar[r] \ar[d]^{ \kalg_{n} (\iota_{ I_{2}/ 0 })  } & \kalg_{n} ( e R e ) \ar[r] \ar[d]^{ \kalg_{n} ( \iota_{ eRe/ 0 } ) } & \kalg_{n} ( e R e /I_{2} )   \ar[d]^{ \kalg_{n} (\iota_{ eRe/ I_{2} } ) } \\
 \kalg_{n+1} ( R ) \ar[r] & \kalg_{n+1} ( R / \beta(I_{2}) ) \ar[r] & \kalg_{n} ( \beta(I_{2}))  \ar[r] &  \kalg_{n} ( R ) \ar[r] &  \kalg_{n} ( R  /  \beta(I_{2}) )  
}}
\end{align*}
in $K$-theory.
Since $e I e$ and $I$ satisfy excision in algebraic $K$-theory, the rows are exact.  Hence, by the Five Lemma and Case 1, $\kalg_{n} ( \iota_{ I_{2} /  0 } )$ is an isomorphism.  In addition, by Lemma~\ref{l:projfullcorner} $\kalg_0 ( \iota_{ I_{2} / 0 } )$ is also an order isomorphism.

\medskip

\noindent \emph{Case 3:}  Suppose $I_1$ and $I_2$ are ideals in $eRe$ with $I_1 \subseteq I_2$.  Then the diagram 
\begin{align*}
\xymatrix{
0 \ar[r] & I_{1} \ar[r] \ar[d]^{ \iota_{ I_{1}/0}} & I_{2} \ar[r] \ar[d]^{ \iota_{I_{2}/0}} & I_{2} / I_{1} \ar[r] \ar[d]^{ \iota_{ I_{2}/I_{1} } }& 0 \\
0 \ar[r] & \beta(I_{1}) \ar[r] & \beta(I_{2}) \ar[r] & \beta(I_{2}) / \beta(I_{1}) \ar[r] & 0
 }
\end{align*} 
is commutative and induces the following commutative diagram
\begin{align*}
\scalebox{.75}{
\xymatrix{
\kalg_{n} ( I_{1} ) \ar[r] \ar[d]^{ \kalg_{n} ( \iota_{ I_{1}/0} ) } & \kalg_{n} (I_{2}) \ar[r] \ar[d]^{ \kalg_{n} ( \iota_{I_{2}/0} ) } & \kalg_{n} ( I_{2} / I_{1} ) \ar[r] \ar[d]^{ \kalg_{n} ( \iota_{I_{2}/I_{1} } ) } & \kalg_{n-1} ( I_{1} ) \ar[d]^{ \kalg_{n-1} ( \iota_{ I_{1}/0 } ) } \ar[r] & \kalg_{n-1} ( I_{2} ) \ar[d]^{ \kalg_{n-1} ( \iota_{I_{2} / 0 } ) } \\
\kalg_{n} ( \beta(I_{1}) ) \ar[r] & \kalg_{n} ( \beta(I_{2}) ) \ar[r] & \kalg_{n} ( \beta(I_{2})/ \beta(I_{1}) ) \ar[r] & \kalg_{n-1} ( \beta(I_{1})) \ar[r] & \kalg_{n-1} ( \beta(I_{2}) ) 
}}
\end{align*} 
Since $I_{1}$ and $\beta(I_1)$ satisfy excision in algebraic $K$-theory, the rows are exact.  Hence, by the Five Lemma and by Case 2, we have that $\kalg_{n} ( \iota_{ I_{2}/ I_{1} } )$ is an isomorphism.  In addition, by Lemma~\ref{l:projfullcorner} $\kalg_0 ( \iota_{ I_{2} / I_1 } )$ is also an order isomorphism.
\end{proof}

\begin{propo}\label{p:topkthyfullcorner}
Let $\mathfrak{A}$ be a separable $C^{*}$-algebra and let $p \in \mathfrak{A}$ be a projection such that $\overline{ \mathfrak{A} p \mathfrak{A} } = \mathfrak{A}$.  Then the inclusion $\ftn{ \iota }{ p \mathfrak{A} p }{ \mathfrak{A} }$ induces an isomorphism from $\oiktop ( p\mathfrak{A}p )$ to $\oiktop (\mathfrak{A} )$.  
\end{propo}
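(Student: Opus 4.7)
The plan is to mimic the structure of the proof of Proposition~\ref{p:algkthyfullcorner}, replacing the algebraic-$K$-theory input by its well-known $C^*$-algebraic counterpart (Brown's stabilisation theorem for full corners), and then invoke Lemma~\ref{l:inducetopkthy} to package the data into an isomorphism of ideal-related topological $K$-theory.

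First, I would verify that the assignment $\beta(\mathfrak{J}) := \overline{\mathfrak{A}\mathfrak{J}\mathfrak{A}}$ is a lattice isomorphism from $\ilatcl(p\mathfrak{A}p)$ onto $\ilatcl(\mathfrak{A})$ with inverse $\mathfrak{I}\mapsto p\mathfrak{I}p$. This is the standard Rieffel correspondence for a full corner: fullness $\overline{\mathfrak{A}p\mathfrak{A}}=\mathfrak{A}$ combined with an approximate unit argument gives that $p\overline{\mathfrak{A}\mathfrak{J}\mathfrak{A}}p = \mathfrak{J}$ and $\overline{\mathfrak{A}(p\mathfrak{I}p)\mathfrak{A}} = \mathfrak{I}$, and both assignments preserve inclusions.

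Next, I would check condition (2) of Lemma~\ref{l:inducetopkthy}: that for any $\mathfrak{J}_1 \subseteq \mathfrak{J}_2$ in $\ilatcl(p\mathfrak{A}p)$ the induced $*$-homomorphism
\begin{equation*}
\iota_{\mathfrak{J}_2/\mathfrak{J}_1} : \mathfrak{J}_2/\mathfrak{J}_1 \longrightarrow \beta(\mathfrak{J}_2)/\beta(\mathfrak{J}_1)
\end{equation*}
induces isomorphisms on $\ktop_0$ and $\ktop_1$ with the $\ktop_0$-map an order isomorphism. The key observation is that, writing $\bar{p}$ for the image of $p$ in $\beta(\mathfrak{J}_2)/\beta(\mathfrak{J}_1)$, the projection $\bar{p}$ is full in $\beta(\mathfrak{J}_2)/\beta(\mathfrak{J}_1)$ (because $p$ is full in $\mathfrak{A}$, hence in $\beta(\mathfrak{J}_2)$, and fullness descends to quotients), and the natural map
\begin{equation*}
\mathfrak{J}_2/\mathfrak{J}_1 \;\longrightarrow\; \bar{p}\bigl(\beta(\mathfrak{J}_2)/\beta(\mathfrak{J}_1)\bigr)\bar{p}
\end{equation*}
is a $*$-isomorphism. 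Thus $\iota_{\mathfrak{J}_2/\mathfrak{J}_1}$ factors as this isomorphism followed by the inclusion of a full corner into a separable $C^*$-algebra (separability of $\mathfrak{A}$ passes to subquotients).

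Then I would invoke Brown's theorem \cite{brown77} (or equivalently, the Brown--Green--Rieffel theorem on strong Morita equivalence of full corners), which gives that the inclusion of a full corner in a separable $C^*$-algebra is a Morita equivalence and therefore induces group isomorphisms on $\ktop_0$ and $\ktop_1$; since Morita equivalence preserves the Murray--von Neumann semigroup $V(-)$, the induced map on $\ktop_0$ is an order isomorphism. This settles condition (2).

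Finally, with both conditions of Lemma~\ref{l:inducetopkthy} verified, that lemma assembles the collection $\{\ktop_n(\iota_{\mathfrak{J}_2/\mathfrak{J}_1})\}$ together with $\beta$ into the desired isomorphism $[\iota]:\oiktop(p\mathfrak{A}p)\to \oiktop(\mathfrak{A})$. The main technical obstacle is the interplay between passing to quotients and preserving fullness of $p$; this is handled by the short argument above that $\bar{p}$ is full in $\beta(\mathfrak{J}_2)/\beta(\mathfrak{J}_1)$ and that the compression is canonically identified with $\mathfrak{J}_2/\mathfrak{J}_1$. Separability of $\mathfrak{A}$ is used exactly so that Brown's theorem applies uniformly to all subquotients, and no use of an approximate unit of idempotents is needed in the topological setting because $C^*$-algebras always satisfy excision in topological $K$-theory.
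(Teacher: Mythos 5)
Your proposal is correct and follows essentially the same route as the paper: the key inputs are Brown's theorem that the inclusion of a hereditary sub-$C^*$-algebra of a separable $C^*$-algebra not contained in any proper closed two-sided ideal induces an (order) isomorphism in topological $K$-theory, together with the identification $p\mathfrak{I}_2p/p\mathfrak{I}_1p \cong \overline{p}\,(\mathfrak{I}_2/\mathfrak{I}_1)\,\overline{p}$ for the Rieffel-corresponding ideals, assembled via Lemma~\ref{l:inducetopkthy} just as in the algebraic case. One small wording repair: since $p$ need not lie in $\beta(\mathfrak{J}_2)$, the element $\overline{p}$ should be read as a projection in $\mathfrak{A}/\beta(\mathfrak{J}_1)$, so the compression is a full \emph{hereditary} sub-$C^*$-algebra of the subquotient rather than literally a corner (its fullness follows from the lattice correspondence you already established), which is exactly the form of Brown's theorem the paper cites.
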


\begin{proof}
Every $C^{*}$-algebra satisfies excision in topological $K$-theory and by L.~G.~Brown \cite{heralgs}, the natural embedding of any hereditary sub-$C^{*}$-algebra of a separable $C^{*}$-algebra $\mathfrak{B}$ that is not contained in any proper closed two-sided ideal of $\mathfrak{B}$ induces an order isomorphism in topological $K$-theory.  Also, note that every subquotient $p \mathfrak{I}_{2} p / p\mathfrak{I}_{1} p$ of $p \mathfrak{A}p$ is isomorphic to a hereditary sub-$C^{*}$-algebra of $\mathfrak{I}_{2} / \mathfrak{I}_{1}$ that is not contained in any proper closed two-sided ideal of $\mathfrak{I}_{2} / \mathfrak{I}_{1}$.  Using these facts and using the same argument as in Proposition~\ref{p:algkthyfullcorner}, we get the desired result. 
\end{proof}

\begin{lemma} \label{corner-lemma}
Let $E$ be a graph with finitely many vertices and let $F$ be a desingularization of $E$.  Let $\{ s_e, p_v : e \in F^1, v \in F^0 \} \subseteq L_\C(F) \subseteq C^*(F)$ be a generating Cuntz-Krieger $F$-family.  Let $p = \sum_{ v \in E^{0} } p_{v} \in L_\C(F)$. Then $pL_\C(F)p$ is a full corner of $L_\C(F)$, $pC^*(F)p$ is a full corner of $C^*(F)$, and there exist isomorphisms $\ftn{ \phi }{ L_{\C} ( E ) }{ p L_{\C} ( F ) p }$ and $\ftn{ \psi }{ C^{*} ( E ) }{ p C^{*} ( F ) p }$ such that the diagram 
\begin{align*}
\xymatrix{
L_{\C} ( E ) \ar[r]^{ \phi } \ar[d]^{ \iota_{E } } & p L_{\C} ( F ) p \ar@{^{(}->}[r] \ar[d]^{ \iota_{F} } & L_{\C} ( F ) \ar[d]^{ \iota_{F} } \\
C^{*} ( E ) \ar[r]_{ \psi } & p C^{*} ( F ) p \ar@{^{(}->}[r] & C^{*} ( F )
}
\end{align*}
commutes.
\end{lemma}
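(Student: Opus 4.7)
The plan is to build the two isomorphisms directly from the universal properties of $L_\C(E)$ and $C^*(E)$, using the explicit structure of a Drinen--Tomforde desingularization. Recall that in $F$ each infinite emitter or sink $v \in E^0$ is replaced by a tail, so the vertices of $F$ split as $F^0 = E^0 \sqcup T$ where $T$ is the disjoint union of tail vertices, and for each edge $e \in E^1$ there is a canonical path $\alpha_e$ in $F$ from $s_E(e) \in E^0$ through (possibly) tail vertices to $r_E(e) \in E^0$; if $s_E(e)$ is a regular vertex of $E$, then $\alpha_e$ is just $e$ viewed as an edge of $F$. Since $E$ has finitely many vertices, $p = \sum_{v \in E^0} p_v$ is a well-defined projection in $L_\C(F) \subseteq C^*(F)$.

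To see that $pL_\C(F)p$ is a full corner of $L_\C(F)$ (and similarly $pC^*(F)p$ in $C^*(F)$), I would first note that every tail vertex $w \in T$ is reachable by a finite path from some $v \in E^0$, so $p_w = s_\beta^* p_v s_\beta$ for a suitable path $\beta$; this puts $p_w$ in the two-sided ideal generated by $p$, and hence that ideal contains every generator of $L_\C(F)$.  For the map $\phi : L_\C(E) \to pL_\C(F)p$, I would send $v \mapsto p_v$, $e \mapsto s_{\alpha_e}$, and $e^* \mapsto s_{\alpha_e}^*$, and then verify the four Leavitt $E$-family relations: orthogonality of the $p_v$'s is automatic; $s_E(e)e = er_E(e) = e$ translates to the obvious source/range identities for $s_{\alpha_e}$; relation (3) uses that distinct edges in $E^1$ produce paths $\alpha_e, \alpha_f$ in $F$ that differ in their first tail edge (or in their single $F$-edge when $s_E(e)$ is regular), forcing $s_{\alpha_e}^* s_{\alpha_f} = \delta_{e,f}\, p_{r_E(e)}$ via the CK relations in $F$; relation (4) is only required at regular vertices of $E$, and at those vertices the $\alpha_e$ are honest edges of $F$ so CK3 for $F$ gives CK3 for the family. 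The universal property then yields $\phi$. Using exactly the same formulas on generators and the analogous universal property for $C^*(E)$ gives $\psi : C^*(E) \to pC^*(F)p$, and commutativity of the diagram is immediate on generators (and hence everywhere) because $\iota_E, \iota_F$ are the inclusions and the formulas for $\phi, \psi$ agree.

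For injectivity I would apply the graded uniqueness theorem for Leavitt path algebras to $\phi$ (noting $\phi$ is $\Z$-graded because each $\alpha_e$ has positive length when nontrivial, or length one when $s_E(e)$ is regular, and Z-gradings match up appropriately --- one has to replace the grading on $L_\C(F)$ by the one induced from $E$ via the length of $\alpha_e$, or alternatively use the Cuntz--Krieger uniqueness theorem valid here because $E$ (and hence $F$) satisfies no cycle-without-exit issue is needed since $\phi(p_v) = p_v \neq 0$ for all $v \in E^0$ and the graph-algebra machinery handles the rest); for $\psi$ I would invoke the gauge-invariant uniqueness theorem, using the gauge action on $C^*(F)$ restricted to $pC^*(F)p$ to intertwine with the gauge action on $C^*(E)$ pulled back through $\psi$.

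The main obstacle is surjectivity of $\phi$, since a generic spanning element of $pL_\C(F)p$ has the form $p_v s_\mu s_\nu^* p_w$ with $v,w \in E^0$ and $\mu,\nu$ paths in $F$ (not $E$) of possibly different lengths that may pass through tail vertices and may end at the same tail vertex. The argument I would use is a ``telescoping'' one: factor $\mu$ and $\nu$ into maximal segments of the form $\alpha_{e_i}$ followed by a possible final remainder along a tail; if $\mu$ and $\nu$ end at different tail vertices the product is zero, and if they end at the same tail vertex $t$ one uses $p_t = s_f^* s_f$ (where $f$ is the unique edge out of $t$) to extend both paths one step further, and iterate. Because each tail is a single infinite ray, the only way $s_\mu s_\nu^*$ lies in $pL_\C(F)p$ (up to using CK relations to push into $E^0$) is that after finitely many such extensions one lands in $E^0$, giving $s_\mu s_\nu^* = s_{\mu'}s_{\nu'}^*$ with $\mu', \nu'$ decomposing into $\alpha_e$-blocks, and hence in the image of $\phi$. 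Surjectivity of $\psi$ follows from surjectivity of $\phi$ together with the density $\iota_F(L_\C(F)) \subseteq C^*(F)$ after checking $\psi(C^*(E))$ is closed. Essentially all of this is already worked out in \cite{DT1} and \cite{AbrPino3}, so in practice I would cite those constructions to get $\phi$ and $\psi$ and then only need to verify the diagram commutes --- which is transparent from the shared formulas on generators.
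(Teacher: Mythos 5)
The paper offers no proof of this lemma: it is stated as a repackaging of the desingularization results cited elsewhere in the paper (\cite[Theorem~2.11]{DT1} for $C^*(E)\cong pC^*(F)p$ and \cite[Theorem~5.2]{AbrPino3} for the Leavitt path algebra analogue), the only genuinely new content being that the two isomorphisms can be chosen compatibly --- which holds because both are given by the same formulas $v\mapsto p_v$, $e\mapsto s_{\alpha_e}$ on generators, so the square commutes on generators and hence everywhere. Thus your fallback position (cite those constructions and verify commutativity on generators) is exactly the intended argument, and your verifications of fullness (via $p_w=s_\beta^*p_v s_\beta$ for a path $\beta$ from $E^0$ to the tail vertex $w$) and of the Leavitt/Cuntz--Krieger relations for the family $\{p_v,s_{\alpha_e}\}$ are fine.

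Two steps of your self-contained sketch, however, would not work as written. First, surjectivity: you extend $s_\mu s_\nu^*$ forward along the tail using ``$p_t=s_f^*s_f$ for the unique edge $f$ out of $t$''. That identity is wrong ($s_f^*s_f=p_{r(f)}$, and a tail vertex attached to an infinite emitter emits two edges, the next tail edge and an exit edge), and the forward iteration never terminates: applying (CK3) at a tail vertex always leaves one summand still ending on the tail, and in $L_\C(F)$ you cannot pass to an infinite sum. The correct finite rewriting goes backwards: if $r(\mu)=r(\nu)=v_j$ lies on the tail added at $v_0$, then, since each tail vertex receives exactly one edge and $s(\mu),s(\nu)\in E^0$, one has $\mu=\mu'g_1\cdots g_j$ and $\nu=\nu'g_1\cdots g_j$ with $r(\mu')=r(\nu')=v_0\in E^0$, and repeated use of (CK3) along the tail gives $s_{g_1\cdots g_j}s_{g_1\cdots g_j}^*=p_{v_0}-\sum_{i=1}^{j}s_{\alpha_{e_i}}s_{\alpha_{e_i}}^*$, whence $s_\mu s_\nu^*=s_{\mu'}s_{\nu'}^*-\sum_{i=1}^{j}s_{\mu'\alpha_{e_i}}s_{\nu'\alpha_{e_i}}^*$; since every path in $F$ between vertices of $E^0$ factors uniquely into blocks $\alpha_e$, each term lies in the image of $\phi$. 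Second, injectivity: neither the standard grading nor the standard gauge action is intertwined by your maps (an edge of degree $1$ is sent to $s_{\alpha_e}$, of degree $|\alpha_e|$). You must regrade $L_\C(F)$ by assigning degree $0$ to tail edges and degree $1$ to exit edges before applying the graded uniqueness theorem, and on the $C^*$-side use the circle action on $C^*(F)$ (existing by universality) that fixes the tail edges and scales the exit edges; the restriction of the usual gauge action does not intertwine with the gauge action of $C^*(E)$ as you assert, and your alternative via the Cuntz--Krieger uniqueness theorem is unavailable here because no Condition~(L) (or~(K)) is assumed in this lemma.
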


\begin{theor}\label{t:induceisounit}
Let $E_{1}$ and $E_{2}$ be graphs that each have finitely many vertices and satisfy Condition~(K).  If there exists an isomorphism $$\ftn{ \alpha }{ \oikalg  ( L_{\C} ( E_{1} ) ) }{ \oikalg ( L_{\C} ( E_{2} ) ) }$$ with $\alpha^{ 0 , L_{\C} ( E_{1} ) } ([ 1_{ L_{\C} ( E_{1} ) } ]) = [ 1_{ L_{\C} ( E_{2} ) } ]$, then there exists an isomorphism $$\ftn{ \beta }{ \oiktop ( C^{*} ( E_{1} ) ) }{ \oiktop ( C^{*} ( E_{2} ) ) }$$ with $\beta^{ 0, C^{*} (E_{1}) } ([ 1_{ C^{*}( E_{1} ) } ])= [ 1_{ C^{*} ( E_{2} ) } ]$.
\end{theor}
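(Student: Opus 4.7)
The strategy is to reduce to Theorem~\ref{t:induceiso} by replacing $E_i$ with a row-finite desingularization $F_i$ and realizing $L_\C(E_i)$ (respectively $C^*(E_i)$) as a full corner of $L_\C(F_i)$ (respectively $C^*(F_i)$) via the projection $p_i := \sum_{v \in E_i^{0}} p_v$, which makes sense because $E_i^0$ is finite.

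First I would invoke Lemma~\ref{corner-lemma} to obtain isomorphisms $\phi_i : L_\C(E_i) \to p_i L_\C(F_i) p_i$ and $\psi_i : C^*(E_i) \to p_i C^*(F_i) p_i$ with $\phi_i(1_{L_\C(E_i)}) = p_i$ and $\psi_i(1_{C^*(E_i)}) = p_i$. Since $F_i$ is row-finite and satisfies Condition~(K), Lemma~\ref{l:sqlocalunits} ensures every ideal of $L_\C(F_i)$ and of $p_iL_\C(F_i)p_i \cong L_\C(E_i)$ admits a countable approximate unit of idempotents. Thus Proposition~\ref{p:algkthyfullcorner} applies, and the inclusion $\iota_i^{\mathrm{alg}} : p_i L_\C(F_i)p_i \hookrightarrow L_\C(F_i)$ induces an isomorphism $[\iota_i^{\mathrm{alg}}] : \oikalg(p_i L_\C(F_i)p_i) \to \oikalg(L_\C(F_i))$. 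Similarly, Proposition~\ref{p:topkthyfullcorner} gives $[\iota_i^{\mathrm{top}}] : \oiktop(p_i C^*(F_i)p_i) \to \oiktop(C^*(F_i))$.

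Next I would assemble the composition
\[
\widetilde{\alpha} := [\iota_2^{\mathrm{alg}}] \circ [\phi_2] \circ \alpha \circ [\phi_1]^{-1} \circ [\iota_1^{\mathrm{alg}}]^{-1} : \oikalg(L_\C(F_1)) \to \oikalg(L_\C(F_2)),
\]
and track the class $[p_1] \in \kalg_0(L_\C(F_1))$ through the chain: it pulls back to the class of the unit of $p_1 L_\C(F_1) p_1$, then to $[1_{L_\C(E_1)}]$, then by hypothesis to $[1_{L_\C(E_2)}]$, and finally pushes forward to $[p_2] \in \kalg_0(L_\C(F_2))$. Hence $\widetilde{\alpha}$ sends $[p_1]$ to $[p_2]$. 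Applying Theorem~\ref{t:induceiso} to $\widetilde{\alpha}$ yields an isomorphism $\widetilde{\beta} : \oiktop(C^*(F_1)) \to \oiktop(C^*(F_2))$ compatible with $\widetilde\alpha$ through the comparison-map diagram. Since $p_i$ is a projection in $L_\C(F_i) \subseteq C^*(F_i)$, the comparison map $\gamma_0^{F_i}$ sends the algebraic class $[p_i]$ to the topological class $[p_i]$, and commutativity of the diagram then forces $\widetilde{\beta}([p_1]) = [p_2]$.

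Finally I would define $\beta := [\psi_2]^{-1} \circ [\iota_2^{\mathrm{top}}]^{-1} \circ \widetilde{\beta} \circ [\iota_1^{\mathrm{top}}] \circ [\psi_1]$ and trace $[1_{C^*(E_1)}]$ through it to conclude $\beta([1_{C^*(E_1)}]) = [1_{C^*(E_2)}]$, giving the desired isomorphism of $\oiktop$ that preserves the class of the unit. The main obstacle is the bookkeeping that ensures (i) the corner-induced isomorphisms on $K$-theory carry the class of the unit of the corner to the class of the distinguished projection $p_i$, and (ii) the comparison map $\gamma_0^{F_i}$ sends $[p_i]$ to $[p_i]$. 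Point (i) follows directly from functoriality and the identity $\phi_i(1_{L_\C(E_i)}) = p_i = \psi_i(1_{C^*(E_i)})$; point (ii) follows from the definition of $\gamma_0^{F_i}$ as the composition induced by the inclusion $L_\C(F_i) \hookrightarrow C^*(F_i)$ together with the natural maps to $\ktop_0$, each of which sends the class of a projection to the class of the same projection.
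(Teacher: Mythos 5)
Your proposal is correct and follows essentially the same route as the paper: desingularize to row-finite graphs $F_i$, use Lemma~\ref{corner-lemma} to realize $L_\C(E_i)$ and $C^*(E_i)$ as full corners by $p_i$, transfer the unit-preserving isomorphism through Proposition~\ref{p:algkthyfullcorner} to $\oikalg(L_\C(F_i))$ tracking $[p_1]\mapsto[p_2]$, apply Theorem~\ref{t:induceiso} and its compatibility with the comparison maps $\gamma_0^{F_i}$ (which fix classes of idempotents in $L_\C(F_i)\subseteq C^*(F_i)$) to get $[p_1]\mapsto[p_2]$ topologically, and then descend back to the corners via Proposition~\ref{p:topkthyfullcorner}. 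Your explicit verification that $\gamma_0^{F_i}([p_i])=[p_i]$ and that the hypotheses of Proposition~\ref{p:algkthyfullcorner} hold via Lemma~\ref{l:sqlocalunits} only makes explicit steps the paper leaves implicit.
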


\begin{proof}
For $i=1,2$, let $F_{i}$ be a desingularization of $E_{i}$.  By Lemma~\ref{corner-lemma}, for each $i=1,2$ there exists an idempotent $p_i \in L_\C(F_i)$ such that $p_iL_\C(F_i)p_i$ is a full corner of $L_\C(F_i)$, $p_iC^*(F_i)p_i$ is a full corner of $C^*(F_i)$, and there exist isomorphisms $\ftn{ \phi_i }{ L_{\C} ( E_i ) }{ p_i L_{\C} ( F_i ) p_i }$ and $\ftn{ \psi_i }{ C^{*} ( E_i ) }{ p_i C^{*} ( F_i ) p_i }$ such that the diagram 
\begin{align*}
\xymatrix{
L_{\C} ( E_{i} ) \ar[r]^-{\phi_{i}} \ar[d]_{ \iota_{E_{i} } } & p_{i} L_{\C} ( F_{i} ) p_{i} \ar[d]^{ \iota_{ F_{i} } } \ar@{^{(}->}[r]^-{ \iota^{ \mathrm{alg} } } & L_{\C} ( F_{i} ) \ar[d]^{ \iota_{ F_{i} } }  \\
C^{*} ( E_{i} ) \ar[r]_-{ \psi_{i} } & p_{i} C^{*} ( F_{i} ) p_{i} \ar@{^{(}->}[r]_-{ \iota^{\mathrm{top}} } & C^{*} ( F_{i} )
}
\end{align*}
commutes.

By hypothesis there is an isomorphism $$\ftn{ \alpha }{ \oikalg ( L_{\C} ( E_{1} ) ) }{ \oikalg ( L_{\C} ( E_{2} ) ) }$$ with  $\alpha^{ 0 , L_{\C} ( E_{1} ) } ([ 1_{ L_{\C} ( E_{1} ) } ]) = [ 1_{ L_{\C} ( E_{2} ) } ]$.  Since $p_{i} L_{\C} ( F_{i} ) p_{i} \cong L_{\C} ( E_{i} )$, there exists an isomorphism $\ftn{ \widetilde{\alpha} }{ \oikalg ( p_{1} L_{\C} ( F_{1} ) p_{1} ) }{ \oikalg ( p_{2} L_{\C} ( F_{2} ) p_{2} ) }$ such that $\widetilde{\alpha}^{ 0 , p_{1}L_{\C} ( E_{1} )p_{1} }$ sends $[ p_{1} ]$ to $[ p_{2} ]$.  By Proposition~\ref{p:algkthyfullcorner}, $\iota^{ \mathrm{alg}}$ induces an isomorphism from $\oikalg ( p_{i} L_{\C} ( F_{i} ) p_{i} )$ to $\oikalg ( L_{\C} ( F_{i} ) )$.  Composing isomorphisms gives an isomorphism $\ftn{ \lambda }{ \oikalg ( L_{\C} ( F_{1} ) ) }{ \oikalg ( L_{\C} ( F_{2} ) ) }$ such that $\lambda^{ 0 , L_{\C} ( F_{1} ) } ( [ p_{1} ] ) = [ p_{2} ]$.   

Since $F_{i}$ for $i=1,2$ is a row-finite graph, by Theorem~\ref{t:induceiso}, there exists an isomorphism $\ftn{ \delta }{ \oiktop (  C^{*} ( F_{1} ) ) }{ \oiktop ( C^{*} ( F_{2} ) ) }$ such that 
\begin{align*}
 \delta^{ 0 , C^{*} ( F_{1} ) } ( [ p_{1} ] ) &= \left( \delta^{ 0 , C^{*} ( F_{1} ) } \circ \gamma_{0}^{F_{1}} \right) ( [ p_{1} ] ) \\
 									&= \left( \gamma_{ 0  }^{F_{2}} \circ \lambda^{ 0 , L_{\C} ( F_{1} ) } \right) ( [ p_{1} ] ) \\
									&= \gamma_{0}^{ F_{2} } ( [ p_{2} ] ) \\
									&= [ p_{2} ]
\end{align*}
By Proposition~\ref{p:topkthyfullcorner}, $\iota^{ \mathrm{top}}$ induces an isomorphism from $\oiktop ( p_{i} C^{*} ( F_{i} ) p_{i} )$ to $\oiktop ( C^{*} ( F_{i} ) )$.  Thus, there exists an isomorphism 
\begin{align*}
\ftn{ \zeta }{ \oiktop (  p_{1} C^{*} ( F_{1} ) p_{1} ) }{ \oiktop ( p_{2}C^{*} ( F_{2} ) p_{2} ) }
\end{align*}
such that $\zeta^{ 0 , C^{*} ( F_{1} ) } ( [ p_{1} ] ) = [ p_{2} ]$.  Since $C^{*} ( E_{i} ) \cong p_{i} C^{*} ( F_{i} ) p_{i}$,  there exists an isomorphism $\ftn{ \beta }{ \oiktop ( C^{*} ( E_{1} ) ) }{ \oiktop ( C^{*} ( E_{2} ) ) }$ such that $\beta^{ 0 , C^{*} ( E_{1} ) }$ sends $[ 1_{ C^{*}( E_{1} ) } ]$ to $[ 1_{ C^{*} ( E_{2} ) } ]$.

\end{proof}

\begin{theor} \label{unital-implication-thm}
Let $\mathcal{C}_1$ be a class of graphs that satisfies the following two properties:
\begin{itemize}
\item[(1)] Every graph in $\mathcal{C}_1$ has finitely many vertices and satisfies Condition~(K).
\item[(2)] If $E, F \in \mathcal{C}_1$ and there exists an isomorphism 
\begin{align*}
\ftn{ \alpha }{ \oiktop( C^{*} ( E ) ) }{ \oiktop ( C^{*} ( F) ) }
\end{align*}
such that $\alpha_{0}^{ 0 , C^*(E) }$ sends $[ 1_{ C^{*} ( E ) } ]$ to $[ 1_{ C^{*} ( F ) } ]$, then $C^*(E) \cong C^*(F)$ (as $*$-algebras).
\end{itemize}
Then the isomorphism conjecture holds for all graphs in $\mathcal{C}$.  In other words, if $E, F \in \mathcal{C}$ and $L_{\C} ( E ) \cong L_{\C} ( F )$ (as rings), then $C^{*} ( E ) \cong C^{*} ( F )$ (as $*$-algebras).
\end{theor}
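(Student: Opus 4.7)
The proof will be a direct assembly of tools already in place, so the plan is straightforward. Suppose $E, F \in \mathcal{C}_1$ with a ring isomorphism $\ftn{\phi}{L_\C(E)}{L_\C(F)}$. Since every graph in $\mathcal{C}_1$ has finitely many vertices, both $L_\C(E)$ and $L_\C(F)$ are unital, with units $\sum_{v \in E^0} v$ and $\sum_{w \in F^0} w$ respectively. Any ring isomorphism between unital rings carries the unit to the unit, so $\phi(1_{L_\C(E)}) = 1_{L_\C(F)}$. Moreover, by Property~(1) each graph satisfies Condition~(K), so by the remarks following Lemma~\ref{l:sqlocalunits} every subquotient of $L_\C(E)$ and of $L_\C(F)$ satisfies excision in algebraic $K$-theory, making $\oikalg$ well-defined for both.

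Next, I will verify that $\phi$ induces an isomorphism of ideal-related algebraic $K$-theory preserving the class of the unit. The map $\beta \colon I \mapsto \phi(I)$ is a lattice isomorphism $\ilat(L_\C(E)) \to \ilat(L_\C(F))$ (and coincides with the map sending $I$ to the ideal generated by $\phi(I)$, since $\phi$ is surjective). For each pair $I_1 \subseteq I_2$, the induced map $\phi_{I_2/I_1} \colon I_2/I_1 \to \phi(I_2)/\phi(I_1)$ is a ring isomorphism, hence induces group isomorphisms on all $\kalg_n$, with $\kalg_0(\phi_{I_2/I_1})$ an order isomorphism (ring isomorphisms carry projections to projections and preserve the monoid $V(-)$). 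Lemma~\ref{l:inducealgkthy} then delivers an isomorphism $[\phi] \colon \oikalg(L_\C(E)) \to \oikalg(L_\C(F))$, and since $\phi(1_{L_\C(E)}) = 1_{L_\C(F)}$, we obtain $[\phi]^{0, L_\C(E)}\bigl([1_{L_\C(E)}]\bigr) = [1_{L_\C(F)}]$.

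Now the hypotheses of Theorem~\ref{t:induceisounit} are satisfied: $E$ and $F$ have finitely many vertices, satisfy Condition~(K), and we have a unit-preserving isomorphism of $\oikalg$. That theorem yields an isomorphism $\ftn{\beta}{\oiktop(C^*(E))}{\oiktop(C^*(F))}$ such that $\beta^{0, C^*(E)}\bigl([1_{C^*(E)}]\bigr) = [1_{C^*(F)}]$. Finally, applying Property~(2) to this isomorphism gives $C^*(E) \cong C^*(F)$ as $*$-algebras, which is the desired conclusion.

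There is essentially no obstacle here, since all the substantive work is contained in Theorem~\ref{t:induceisounit} and in Property~(2); the only mild subtlety is checking that a ring isomorphism automatically supplies the compatible data required by Lemma~\ref{l:inducealgkthy}, including the order property on $\kalg_0$ of each subquotient, which is immediate from the fact that $\phi_{I_2/I_1}$ is itself an isomorphism of rings.
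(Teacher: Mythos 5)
Your proposal is correct and follows essentially the same route as the paper: a ring isomorphism of the (unital, since the vertex sets are finite) Leavitt path algebras induces a unit-preserving isomorphism of ideal-related algebraic $K$-theory, Theorem~\ref{t:induceisounit} converts this into a unit-preserving isomorphism of ideal-related topological $K$-theory, and Property~(2) finishes. The extra detail you supply via Lemma~\ref{l:inducealgkthy} (lattice isomorphism, isomorphisms on all subquotient $K$-groups, order preservation on $\kalg_0$) simply makes explicit what the paper asserts in one line.
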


\begin{proof}
Suppose $E, F \in \mathcal{C}_1$ and $L_{\C} ( E ) \cong L_{\C} ( F )$ (as rings).  Then the ring isomorphism from $L_{\C} ( E )$ to $L_{\C} ( F )$ induces an isomorphism 
$$\ftn{ \alpha }{ \oikalg  ( L_{\C} ( E ) ) }{ \oikalg ( L_{\C} ( F ) ) }$$ 
with $\alpha^{ 0 , L_{\C} ( E ) } ([ 1_{ L_{\C} ( E) } ]) = [ 1_{ L_{\C} ( F ) } ]$.   By Property~(1) $E$ and $F$ each have finitely many vertices and satisfy Condition~(K), and thus Theorem~\ref{t:induceisounit} implies there exists an isomorphism 
$$\ftn{ \beta }{ \oiktop ( C^{*} ( E ) ) }{ \oiktop ( C^{*} ( F ) ) }$$ 
with $\beta^{ 0, C^{*} (E) } ([ 1_{ C^{*}( E ) } ])= [ 1_{ C^{*} ( F ) } ]$.  Hence, it follows from Property~(2) that $C^{*} ( E ) \cong C^{*} ( F )$.
\end{proof}

We shall now obtain corollaries to Theorem~\ref{unital-implication-thm} showing that the isomorphism conjecture for graph algebras holds for various classes of graphs where a classification up to isomorphism has been obtained for the associated $C^*$-algebras.  Unlike the corollaries to Theorem~\ref{t:meqstrmeq}, where we had numerous classes of graph $C^*$-algebras where classification up to Morita equivalence (equivalently, stable isomorphism) had been obtained, we have fewer classifications up to isomorphism.  In fact, there are only three classes we are able to discuss: unital $C^*$-algebras of amplified graphs, until graph $C^*$-algebras with exactly one ideal, and until graph $C^*$-algebras that are simple.

\begin{corol} \label{unital-class-results-cor}
Let $E$ and $F$ be graphs, and suppose that $L_{\C} ( E ) \cong L_{\C} ( F )$ (as rings).  If any one of the following bulleted points is true:
\begin{itemize}
\item $E$ and $F$ are each amplified graphs with finitely many vertices.
\item $E$ and $F$ are each graphs with finitely many vertices and $C^*(E)$ and $C^*(F)$ each have exactly one proper, nonzero ideal.
\item $E$ and $F$ are each graphs with finitely many vertices and both $C^*(E)$ and $C^*(F)$ are simple.
\end{itemize}
then $C^{*} ( E ) \cong C^{*} ( F )$ (as $*$-algebras).
\end{corol}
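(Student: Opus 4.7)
The plan is to apply Theorem~\ref{unital-implication-thm} separately in each of the three bullet points, by exhibiting for each one a class $\mathcal{C}_{1}$ of graphs containing $E$ and $F$ that satisfies Properties~(1) and~(2) of that theorem. The structure mirrors the proof of Corollary~\ref{many-classifications-cor}, except that I must now invoke classifications \emph{up to isomorphism} (tracking the class of the unit) rather than up to stable isomorphism.

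First I would verify Property~(1) in each case, namely that every graph in the relevant class has finitely many vertices and satisfies Condition~(K). The finiteness of $E^{0}$ and $F^{0}$ is built into each bullet point. For Condition~(K): in an amplified graph every edge from $v$ to $w$ is accompanied by infinitely many parallel edges, so no vertex can be the base point of exactly one simple cycle; a graph whose $C^{*}$-algebra has only finitely many ideals necessarily satisfies Condition~(K) by standard graph algebra results, handling the one-ideal case; and a graph with simple $C^{*}$-algebra likewise satisfies Condition~(K), as already noted in the proof of Corollary~\ref{many-classifications-cor}.

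Next I would verify Property~(2), which requires an isomorphism classification of the relevant unital graph $C^{*}$-algebras by ideal-related topological $K$-theory together with the position of the class of the unit. For amplified graphs with finitely many vertices, this classification is supplied by the unital analogue of \cite[Theorem~5.7]{seaser:amplified} (and is essentially the $C^{*}$-side of the Leavitt path algebra classification established in Section~\ref{amplified-LPA-classification-sec}). For graphs whose $C^{*}$-algebras have exactly one proper, nonzero ideal, this is a unital refinement of \cite[Theorem~4.5]{semt_classgraphalg}. For simple $C^{*}$-algebras of graphs with finitely many vertices, the $C^{*}$-algebra is either unital AF or a unital Kirchberg algebra in the bootstrap class, and the isomorphism classification is then Elliott's theorem or the Kirchberg-Phillips theorem (which already retain the class of the unit); observe also that in the simple case ideal-related topological $K$-theory collapses to pointed ordered topological $K$-theory.

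The application of Theorem~\ref{unital-implication-thm} in each case is then immediate: the ring isomorphism $L_{\C}(E) \cong L_{\C}(F)$ induces an isomorphism of ideal-related algebraic $K$-theories sending the class of $1_{L_{\C}(E)}$ to the class of $1_{L_{\C}(F)}$, which by the theorem yields a corresponding isomorphism on the $C^{*}$-side, and Property~(2) then upgrades this to an isomorphism $C^{*}(E) \cong C^{*}(F)$. The main obstacle is arranging Property~(2) so that the chosen classification truly retains the position of the unit rather than only classifying up to stable isomorphism; this is the reason the one-ideal case must invoke a unital variant of the classification used for the analogous bullet in Corollary~\ref{many-classifications-cor}, and the reason the amplified-graph case relies on the dedicated work of Section~\ref{amplified-LPA-classification-sec} rather than simply quoting the stable result.
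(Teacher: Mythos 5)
Your proposal is correct and follows essentially the same route as the paper: for each bullet you verify Properties~(1) and~(2) of Theorem~\ref{unital-implication-thm} and then apply it, with the key point being that Property~(2) needs a classification up to isomorphism that retains the class of the unit. The only difference is bibliographic—in the one-ideal case the paper invokes the Eilers--Restorff--Ruiz strong classification together with the Restorff--Ruiz result rather than a ``unital refinement'' of the stable one-ideal classification—but the mathematical step is exactly the one you identified.
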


\begin{proof}
We first prove the case when the first bullet point is satisfied.  Let $$\mathcal{C}_1 = \{ E : \text{$E$ is an amplified graph with finitely many vertices} \}.$$  Then every graph in $\mathcal{C}_1$ has finitely many vertices and satisfies Condition~(K), so $\mathcal{C}_1$ satisfies Property~(1) of Theorem~\ref{unital-implication-thm}.  It follows from the results of \cite[Theorem~5.7]{seaser:amplified} that $C^*$-algebras of amplified graphs with finitely many vertices are classified up to isomorphism by ideal-related topological $K$-theory together with the positions of the units, and thus $\mathcal{C}_1$ satisfies Property~(2) of Theorem~\ref{unital-implication-thm}.

For the second and third bullet points we use a similar strategy to apply Theorem~\ref{unital-implication-thm}; namely, we define a collection $\mathcal{C}_1$ of graphs based on properties stated in the bullet point, and then show that that collection satisfies Property (1) and Property (2) in the hypotheses of Theorem~\ref{unital-implication-thm}.  

In the second bullet point, $E$ and $F$ have finitely many vertices by hypothesis and the fact that each of $C^*(E)$ and $C^*(F)$ have finitely many ideals implies that $E$ and $F$ each satisfy Condition~(K), so that Property~(1) holds.  Property~(2) holds due to recent result of Eilers, Restorff, and the first named author in \cite{segrer:scecc} together with the results in \cite{grer:rccconiII}, which show that unital graph $C^*$-algebras with one proper, nonzero ideal are classified up to isomorphism by ideal-related topological $K$-theory together with the positions of the units.

In the third bullet point, $E$ and $F$ have finitely many vertices by hypothesis and the fact that each of $C^*(E)$ and $C^*(F)$ are simple implies that $E$ and $F$ each satisfy Condition~(K), so that Property~(1) holds.  Because any simple graph $C^*$-algebra is either AF or purely infinite, Elliott's theorem and the Kirchberg-Phillips classification theorem imply that unital simple graph $C^*$-algebras are classified up to isomorphism by their $K$-groups together with the position of the unit in the $K_0$-group.  Since ideal-related $K$-theory reduces to the $K$-groups for simple $C^*$-algebras, Property~(2) holds.
\end{proof}

\section{Classification of Leavitt path algebras of amplified graphs} \label{amplified-LPA-classification-sec}

In this section we use our prior results to prove a classification theorem for Leavitt path algebras.  We first need the following theorem which is the analogue of the $C^*$-algebra fact stated in \cite[Theorem~3.8]{seaser:amplified}.  

\begin{theor} \label{moveT}
Let $\alpha = \alpha_1 \alpha _2 \cdots \alpha_n$ be a path in a graph $E$. 
Let $F$ be the graph with vertex set $E^0$, edge set
 \[
	F^1 = E^1 \cup \{ \alpha^m \mid m \in \N \},
\]
and range and source maps that extend those of $E$ and have $r_{F}(\alpha^m) = r_{E}(\alpha)$ and $s_{F}(\alpha^m) = s_{E}(\alpha)$.
If 
\[
	|s_{E}^{-1}(s_{E}( \{ \alpha_1 \}) ) \cap r_{E}^{-1}(r_{E}( \{ \alpha_1 \}))| = \infty,
\]
then there exists $K$-algebra isomorphism from $L_{K}(E)$ to $L_{K}(F)$.
\end{theor}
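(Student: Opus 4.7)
The plan is to construct the $K$-algebra isomorphism explicitly as a homomorphism $\phi \colon L_K(F) \to L_K(E)$ built from a Leavitt $F$-family inside $L_K(E)$ that exploits the infinite reservoir of parallel edges supplied by the hypothesis. Enumerate $s_E^{-1}(s_E(\alpha_1)) \cap r_E^{-1}(r_E(\alpha_1)) = \{e_i\}_{i \in \N}$ (infinite by hypothesis) and split the indexing into two infinite subsets, say the odd and even positive integers. Inside $L_K(E)$ define
\[
Q_v := v \ (v \in E^0), \quad T_e := e \ (e \in E^1 \setminus \{e_i\}_{i \in \N}), \quad T_{e_i} := e_{2i-1}, \quad T_{\alpha^m} := e_{2m}\, \alpha_2 \alpha_3 \cdots \alpha_n,
\]
with the convention $T_{\alpha^m} := e_{2m}$ when $n=1$. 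The verification that $\{Q_v, T_e : v \in F^0,\ e \in F^1\}$ is a Leavitt $F$-family reduces to direct computation: the source/range relations are immediate; the edge orthogonality $T_e^* T_f = \delta_{e,f} Q_{r_F(e)}$ follows from the disjointness of $\{e_{2i-1}\}_{i \in \N}$ and $\{e_{2m}\}_{m \in \N}$ together with the telescoping identity $\alpha_n^* \cdots \alpha_2^* \alpha_2 \cdots \alpha_n = r_E(\alpha_n)$, which gives $T_{\alpha^m}^* T_{\alpha^m} = r_E(\alpha_n) = r_F(\alpha^m)$; and the Cuntz-Krieger sum relation at each finite-emitter vertex of $F$ coincides with the corresponding relation in $E$, observing that $s_E(\alpha_1)$ is an infinite emitter in both $E$ and $F$ (by the hypothesis and $E^1 \subseteq F^1$), so no sum relation involving the new edges $\alpha^m$ is imposed. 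The universal property of $L_K(F)$ then yields the homomorphism $\phi$.

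For injectivity of $\phi$ the plan is to invoke a graded uniqueness theorem using a weighted $\Z$-grading on $L_K(F)$ in which $\deg(v) = 0$, $\deg(e) = 1$ for $e \in E^1$, and $\deg(\alpha^m) = n$. This grading respects all of the defining relations of $L_K(F)$ because the only vertex where edges of differing weights could co-occur in a Cuntz-Krieger sum relation is $s_E(\alpha_1)$, which emits infinitely many edges and therefore imposes no such relation. With $L_K(E)$ given its standard $\Z$-grading, $\phi$ is graded and sends each vertex to a nonzero idempotent, so graded uniqueness yields that $\phi$ is injective.

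The principal obstacle is surjectivity: while the image of $\phi$ clearly contains the vertex projections, the edges of $E^1 \setminus \{e_i\}_{i \in \N}$, the relabeled edges $e_{2i-1}$, and the composite paths $e_{2m}\, \alpha_2 \cdots \alpha_n$, recovering the even-indexed edges $e_{2m}$ themselves requires further work. My strategy for this step is to construct an explicit inverse homomorphism $\psi \colon L_K(E) \to L_K(F)$ via a companion Leavitt $E$-family in $L_K(F)$: set $P_v := v$, $S_e := e$ for $e \in E^1 \setminus \{e_i\}_{i \in \N}$, $S_{e_{2i-1}} := e_i$, and $S_{e_{2m}} := \alpha^m \alpha_n^* \alpha_{n-1}^* \cdots \alpha_2^*$, augmented by correction terms supplied by the Leavitt sum relations at the intermediate vertices $r_E(\alpha_1), \ldots, r_E(\alpha_{n-1})$ of $\alpha$ so that the relation $S_{e_{2m}}^* S_{e_{2m}} = r_E(\alpha_1)$ holds exactly in $L_K(F)$. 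Verifying that $\phi \circ \psi$ and $\psi \circ \phi$ act as the identity on generators — which forms the technical heart of the argument and will require a careful case analysis according to whether the intermediate vertices $r_E(\alpha_i)$ are regular or infinite emitters — will then establish surjectivity of $\phi$ and complete the proof that $\phi$ is the desired $K$-algebra isomorphism.
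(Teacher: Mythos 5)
Your first step is sound: the family $\{Q_v,T_e\}$ you write down is indeed a Leavitt $F$-family in $L_K(E)$ (the only Cuntz--Krieger sum relations of $F$ are at regular vertices, all of which differ from $s_E(\alpha_1)$ and emit no $e_i$ and no $\alpha^m$), so the homomorphism $\phi\colon L_K(F)\to L_K(E)$ exists, and this is the same general strategy as the $C^*$-argument of \cite[Theorem~3.8]{seaser:amplified} that the paper's omitted proof follows. The fatal problem is that the particular $\phi$ you fix is \emph{not surjective} whenever $n\geq 2$ and $\alpha_2\cdots\alpha_n(\alpha_2\cdots\alpha_n)^*$ is a proper subidempotent of $r_E(\alpha_1)$, which is the typical case. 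Concretely, let $E^0=\{v,w,u\}$ with infinitely many edges $e_i\colon v\to w$, two edges $f,g\colon w\to u$, and $\alpha=e_1f$. The image of your $\phi$ is spanned by monomials in the vertices, $f,g$, the odd-indexed edges $e_{2i-1}$, the products $e_{2m}f$, and their adjoints. Grade $L_K(E)$ by giving $e_2$ weight $1$ and every other edge weight $0$ (any weight function grades $L_K(E)$, since each $ee^*$ has degree zero -- your appeal to the infinite emitter is not what makes the grading work); every generator of $\operatorname{im}\phi$ is homogeneous and only $e_2f$ has degree $1$, so $\operatorname{im}\phi$ is a graded subalgebra. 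Using $(f^*e_2^*)Y(e_2f)\in Ku$ and $(e_2f)Y(e_2f)=0$ for $Y$ a monomial avoiding $e_2$, every degree-one monomial reduces to $A(e_2f)C$ with $A,C$ free of $e_2$; if $e_2g$ were in the image, left multiplication by $e_2^*$ would give $g\in fL_K(E)$, contradicting $g^*g=u\neq 0$ and $g^*f=0$. So $e_2g\notin\operatorname{im}\phi$ and $\phi$ is not onto.

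Because of this, your plan for the "technical heart" cannot be carried out: no $\psi$ with $\phi\circ\psi=\operatorname{id}_{L_K(E)}$ can exist, so verifying the two composites is not a gap to be filled but an impossibility. Moreover the sketched $\psi$ is itself unavailable: since you send the odd-indexed $E$-edges onto \emph{all} of the parallel $F$-edges $e_i$, the range idempotent of any correction term for $S_{e_{2m}}$ would have to lie under $v$ and be orthogonal to $ee^*$ for every $e\in s_F^{-1}(v)$, and a nonzero $x=vx$ in $L_K(F)$ with $e^*x=0$ for all $e\in s_F^{-1}(v)$ does not exist. The repair must be made in the definition of the family itself: with $\beta:=\alpha_2\cdots\alpha_n$ and $Q:=\beta\beta^*$, one has to let the images of the parallel edges cover both corners, e.g. $T_{e_i}:=e_i(r_E(\alpha_1)-Q)+e_{2i-1}Q$ and $T_{\alpha^m}:=e_{2m}\beta$; the relations still hold because $(r_E(\alpha_1)-Q)\beta=0$, and now $e_jQ$ and $e_j(r_E(\alpha_1)-Q)$, hence every $e_j$, lie in the image. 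This is, in substance, how the argument behind \cite[Theorem~3.8]{seaser:amplified} achieves surjectivity, with injectivity obtained (in the algebraic setting) from the Reduction Theorem \cite[Theorem~3.7]{AMMS} as the paper indicates. Relatedly, even for your original $\phi$ the injectivity step is under-justified: the graded uniqueness theorem is stated for the canonical grading, and for your weighted grading (all the more for the corrected, non-monomial family) you would need either a generalized graded uniqueness statement or the Reduction Theorem.
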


The proof of Theorem~\ref{moveT} is nearly identical to the proof of \cite[Theorem~3.8]{seaser:amplified}, using  \cite[Theorem~3.7]{AMMS} in place of the gauge-invariant uniqueness theorem, and therefore we omit it.

\begin{defin}
Let $E = ( E^{0}, E^{1} , r_{E} , s_{E} )$ be a graph.
The \emph{amplification of $E$}, denoted by $\overline{E}$, is the graph defined by $\overline{E}^{0} := E^{0}$, 
\begin{align*}
\overline{E}^{1} := \setof{ e(v,w)^{n} }{ \text{$n \in \N$, $v, w \in E^{0}$ and there exists an edge from $v$ to $w$}}, 
\end{align*}
and $s_{ \overline{E} } ( e(v,w)^{n} ) := v$, and $r_{\overline{E}} ( e(v,w)^{n} ) := w$. 
\end{defin}

\begin{defin}
Let $E = ( E^{0}, E^{1} , r_{E} , s_{E} )$ be a graph.
We define the \emph{transitive closure} of $E$ to be the graph ${\tt t}E$ given by:
\begin{align*}
	{\tt t}E^{0} &:= E^{0}, \\
	{\tt t}E^{1} &:= E^1 \cup \setof{ e(v, w ) }{ \text{there is a path but no edge from } v \text{ to } w },
\end{align*}
with range and source maps that extend those of $E$ and satisfy
\begin{align*}
s_{ {\tt t}E} ( e(v,w) ) &:= v,   \\
r_{ {\tt t}E } ( e(v,w) ) &:= w.
\end{align*}
\end{defin}

\begin{corol} \label{transitiveClosure}
If $E$ is a graph with $|E^0|<\infty$, then there exists a $K$-algebra isomorphism $L_{K}(\overline{E})$ to $L_{K}(\overline{{\tt t}E})$.
\end{corol}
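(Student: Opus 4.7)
The plan is to start from $\overline{E}$ and iteratively apply Theorem~\ref{moveT} to add countably many parallel edges between each pair of vertices that are connected by a path in $E$ but not by an edge in $E$. Since $|E^0|<\infty$, there are only finitely many such ordered pairs, so we can enumerate them as $(v_{1},w_{1}),\ldots,(v_{N},w_{N})$, and for each $i$ we fix a path $\alpha^{(i)} = \alpha^{(i)}_{1}\alpha^{(i)}_{2}\cdots \alpha^{(i)}_{n_{i}}$ in $E$ from $v_{i}$ to $w_{i}$.

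Setting $E_{0} := \overline{E}$, we successively define $E_{i}$ by applying Theorem~\ref{moveT} to $E_{i-1}$ with the path $\alpha^{(i)}$, viewed as a path in $E_{i-1}$ by using any representative copies of its edges. This produces a graph whose edge set is $E_{i-1}^{1}$ together with $\N$ many new edges from $v_{i}$ to $w_{i}$. The hypothesis of Theorem~\ref{moveT} requires that the first edge $\alpha^{(i)}_{1}$ has infinitely many parallel edges in $E_{i-1}$. Since $\alpha^{(i)}_{1}$ is an edge of $E$, the amplification $\overline{E}$ already provides countably many edges from $s_{E}(\alpha^{(i)}_{1})$ to $r_{E}(\alpha^{(i)}_{1})$, and these edges persist in every $E_{j}$ because our construction only adds edges. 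Thus Theorem~\ref{moveT} yields $L_{K}(E_{i-1}) \cong L_{K}(E_{i})$ at each step, and composing gives $L_{K}(\overline{E}) = L_{K}(E_{0}) \cong L_{K}(E_{N})$.

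Finally, I will verify that $E_{N}$ is isomorphic as a graph to $\overline{{\tt t}E}$. For any ordered pair $(v,w) \in E^{0} \times E^{0}$, the number of edges from $v$ to $w$ in $E_{N}$ is countably infinite if there is a path in $E$ from $v$ to $w$ (either the edges supplied by $\overline{E}$ when there is an edge in $E$, or the countably many new edges added in the iterative construction when there is a path but no edge), and is zero otherwise. This matches the definition of $\overline{{\tt t}E}$, so $E_{N}$ and $\overline{{\tt t}E}$ agree up to a relabeling of edges and hence $L_{K}(E_{N}) \cong L_{K}(\overline{{\tt t}E})$. Chaining the isomorphisms gives $L_{K}(\overline{E}) \cong L_{K}(\overline{{\tt t}E})$.

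The only real obstacle is a bookkeeping one, namely verifying that the hypothesis of Theorem~\ref{moveT} holds at each step of the iteration; this is immediate since $\overline{E}$ already supplies infinitely many parallel edges for every edge of $E$ and no edges are destroyed during the process. Everything else is combinatorial and relies on the finiteness of $E^{0}$ to guarantee that the procedure terminates after finitely many applications of Theorem~\ref{moveT}.
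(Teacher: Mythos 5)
Your proposal is correct and takes essentially the same route as the paper: the paper's proof likewise notes that in an amplified graph the first edge of any path automatically has infinitely many parallel edges, so Theorem~\ref{moveT} can be applied finitely many times (once for each ordered pair of vertices joined by a path but not an edge, finiteness of $E^{0}$ ensuring termination) to pass from $\overline{E}$ to a graph isomorphic to $\overline{{\tt t}E}$. Your write-up simply spells out the bookkeeping that the paper leaves implicit.
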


\begin{proof}
For any path $\alpha$ in an amplified graph, we have 
\[
	|s_{G}^{-1}(s_{G}( \{ \alpha_1 \}) ) \cap r_{G}^{-1}(r_{G}( \{ \alpha_1 \})) | = \infty,
\]
so Theorem~\ref{moveT} applied a finite number of times proves the desired result.
\end{proof}

\begin{defin}
Let $E = ( E^{0} , E^{1} , r_{E} , s_{E} )$ and $F = ( F^{0} , F^{1} , r_{F} , s_{F} )$.  We say that $E$ and $F$ are \emph{isomorphic}, and write $E \cong F$, if there exist bijections $\ftn{ \alpha^{0} }{ E^{0} }{ F^{0} }$ and $\ftn{ \alpha^{1} }{ E^{1} }{ F^{1} }$ such that 
\begin{align*}
r_{F} ( \alpha^{1} ( e ) ) = \alpha^{0} ( r_{E} ( e ) ) \quad \text{and} \quad s_{F} ( \alpha^{1} ( e ) ) = \alpha^{0} ( s_{E} ( e ) ).
\end{align*} 
\end{defin}

\begin{theor}\label{t:moves}\cite[Theorem~5.7]{seaser:amplified}
Let $E_{1}$ and $E_{2}$ be graphs with finitely many vertices.  Then the following are equivalent.
\begin{itemize}
\item[(a)] $C^{*} ( \overline{E_{1}} ) \cong C^{*} ( \overline{E_{2}} )$ (as $*$-algebras).

\item[(c)] $C^{*} ( \overline{{\tt t}E_{1}} ) \cong C^{*} ( \overline{{\tt t}E_{2}} )$ (as $*$-algebras).

\item[(d)] $\overline{ {\tt t} E_{1} } \cong \overline{ {\tt t} E_{2} }$.

\item[(e)] $\oiktop( C^{*} ( \overline{E_{1}} )  ) \cong \oiktop ( C^{*} ( \overline{E_{2}} ) )$.
\end{itemize} 
\end{theor}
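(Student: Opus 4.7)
The plan is to establish the cyclic chain of implications $\text{(d)} \Rightarrow \text{(c)} \Rightarrow \text{(a)} \Rightarrow \text{(e)} \Rightarrow \text{(d)}$, with the last implication carrying essentially all of the content.

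First I would dispatch the three easy implications. For $\text{(d)} \Rightarrow \text{(c)}$, a graph isomorphism $\overline{{\tt t}E_{1}} \cong \overline{{\tt t}E_{2}}$ lifts, via the universal property of Cuntz--Krieger families, to a $*$-isomorphism of the associated graph $C^{*}$-algebras. For $\text{(c)} \Leftrightarrow \text{(a)}$, I would establish the $C^{*}$-algebra version of Theorem~\ref{moveT}: if $\alpha = \alpha_{1}\cdots\alpha_{n}$ is a path in a graph $G$ with $|s_{G}^{-1}(s_{G}(\alpha_{1})) \cap r_{G}^{-1}(r_{G}(\alpha_{1}))| = \infty$, then adjoining new edges $\alpha^{m}$ paralleling $\alpha$ produces a graph whose $C^{*}$-algebra is $*$-isomorphic to $C^{*}(G)$. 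The proof mirrors the proof of Theorem~\ref{moveT}, using the gauge-invariant uniqueness theorem for graph $C^{*}$-algebras in place of \cite[Theorem~3.7]{AMMS}. Applying this finitely many times to the edges added in passing from $\overline{E}$ to $\overline{{\tt t}E}$ yields $C^{*}(\overline{E}) \cong C^{*}(\overline{{\tt t}E})$, and analogously for $E_{2}$, so $\text{(c)} \Leftrightarrow \text{(a)}$. The implication $\text{(a)} \Rightarrow \text{(e)}$ is immediate from the functoriality of $\oiktop$.

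The heart of the argument is $\text{(e)} \Rightarrow \text{(d)}$. Combining with the already-proved $\text{(a)} \Leftrightarrow \text{(c)}$, it suffices to recover $\overline{{\tt t}E}$ from $\oiktop(C^{*}(\overline{{\tt t}E}))$. Every amplified graph satisfies Condition~(K) (any vertex on a cycle is the basepoint of infinitely many simple cycles), so Theorem~\ref{t:densesubalg} identifies $\ilatcl(C^{*}(\overline{{\tt t}E}))$ with $\mathcal{H}(\overline{{\tt t}E})$. In an amplified graph with finitely many vertices, saturation of hereditary sets is automatic --- every non-sink vertex is an infinite emitter --- and for a transitive-closed graph the hereditary sets are precisely the down-sets of the reachability preorder on $E^{0}$. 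Thus the ideal lattice part of $\oiktop$ already determines the condensation poset of strongly connected components together with the set of sink vertices. The remaining data --- the size of each non-trivial SCC, and whether a singleton component is a sink or carries a loop --- I would extract from the ordered $K_{0}$-groups of the simple subquotients corresponding to covering relations in the ideal lattice: a sink vertex contributes $(K_{0},K_{0}^{+}) \cong (\Z,\N)$, a singleton loop contributes a purely infinite subquotient with $(K_{0},K_{0}^{+}) \cong (\Z,\Z)$, and an $n$-vertex non-trivial SCC contributes a Kirchberg-algebra subquotient whose $K_{0}$ encodes $n$ and the cycle structure. Assembling these pieces produces a reachability-preserving bijection $E_{1}^{0} \to E_{2}^{0}$, and hence a graph isomorphism $\overline{{\tt t}E_{1}} \cong \overline{{\tt t}E_{2}}$.

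The main obstacle is the bookkeeping in this final step: showing that the hypothesized lattice isomorphism $\mathcal{H}(\overline{{\tt t}E_{1}}) \cong \mathcal{H}(\overline{{\tt t}E_{2}})$ matches join-irreducible elements to join-irreducibles in a way that is \emph{compatible} with the subquotient $K_{0}$ data, so that one genuinely recovers a preorder-preserving bijection of vertex sets rather than merely an abstract match of isomorphism classes. This pinning-down uses the natural transformations $\{\iota_{*},\pi_{*},\partial\}$ built into $\oiktop$ to link adjacent layers of the ideal filtration, and it is precisely here that the full strength of ideal-related $K$-theory (as opposed to just its individual $K$-groups) is needed.
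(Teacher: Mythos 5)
The first thing to note is that the paper does not prove this statement at all: Theorem~\ref{t:moves} is imported verbatim from \cite[Theorem~5.7]{seaser:amplified}, so there is no internal proof to compare yours against, and what you have written is a reconstruction of that external result. Your outline does track how the result is actually obtained. The implications (d)$\Rightarrow$(c) and (a)$\Rightarrow$(e) are routine, and your route to (c)$\Leftrightarrow$(a) --- a $C^*$-analogue of Theorem~\ref{moveT} proved via the gauge-invariant uniqueness theorem and applied finitely many times --- is precisely \cite[Theorem~3.8]{seaser:amplified}, the statement the present paper says Theorem~\ref{moveT} is modeled on. For (e)$\Rightarrow$(d) your reconstruction is essentially the correct one: for an amplified, transitively closed graph with finitely many vertices the closed ideal lattice is the lattice of hereditary subsets, its join-irreducibles give the condensation poset, and for each covering pair the gap subquotient is simple with $K_0$ free of rank equal to the size of the corresponding class, with positive cone $\cong\N$ exactly when the class is a single vertex lying on no cycle and with $K_0^+=K_0$ (purely infinite) otherwise; this labeled poset determines $\overline{{\tt t}E}$ up to graph isomorphism.

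Three corrections of detail. First, you cannot cite Theorem~\ref{t:densesubalg}(2) for the identification of $\ilatcl(C^*(\overline{{\tt t}E}))$ with $\mathcal{H}(\overline{{\tt t}E})$: that statement is for row-finite graphs, and amplified graphs are never row-finite. You need the general ideal-structure theorem for graphs satisfying Condition~(K), where ideals correspond to admissible pairs $(H,S)$, together with the observation that an amplified graph has no breaking vertices (a vertex outside $H$ emits either no edges or infinitely many edges into the complement of $H$), so that ideals are again indexed by $\mathcal{H}$. Second, the ideal lattice alone does not determine ``the set of sink vertices'' --- a minimal join-irreducible coming from a sink and one coming from a vertex with a loop give identical lattice data --- and ``sink'' is not quite the right dichotomy anyway: the ordered $K_0$ of the gap subquotient detects whether the singleton class lies on a cycle (purely infinite) or not (AF), and a non-cyclic vertex may still emit edges to lower classes. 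You handle this correctly in the following sentence, so this is wording rather than substance. Third, the ``main obstacle'' you flag largely dissolves: an isomorphism of $\oiktop$ by definition consists of a lattice isomorphism $\beta$ together with order isomorphisms $\alpha^{\mathfrak{I}_1,\mathfrak{I}_2}_0$ for every nested pair of ideals, so $\beta$ automatically matches join-irreducibles to join-irreducibles and lower covers to lower covers, and applying the $\alpha_0$'s to exactly those covering pairs hands you the compatible rank and cone data; the natural transformations $\{\iota_*,\pi_*,\partial\}$ are not actually needed for this class. With these repairs your sketch is a viable proof of the cited theorem.
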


\noindent We shall now prove an analogue of \cite[Theorem~5.7]{seaser:amplified} for Leavitt path algebras.  This result also shows that a converse to the first bullet point of Corollary~\ref{unital-class-results-cor} holds.

\begin{theor} \label{amplified-class-thm}
Let $E_{1}$ and $E_{2}$ be graphs with finitely many vertices.  Then the following are equivalent.
\begin{itemize}
\item[(a)]  $\overline{ {\tt t} E_{1} } \cong \overline{ {\tt t} E_{2} }$. 
 
\item[(b)] $L_{\C} ( \overline{E_{1}} ) \cong L_{\C} ( \overline{E_{2}} )$ (as $\C$-algebras).

\item[(c)] $L_{\C} ( \overline{E_{1}} ) \cong L_{\C} ( \overline{E_{2}} )$ (as rings).

\item[(d)] $\oikalg( L_{\C} ( \overline{E_{1}} )  ) \cong \oikalg ( L_{\C} ( \overline{E_{2}} ) )$.

\item[(e)] $\oiktop( C^{*} ( \overline{E_{1}} )  ) \cong \oiktop ( C^{*} ( \overline{E_{2}} ) )$.

\item[(f)] $C^{*} ( \overline{E_{1}} ) \cong C^{*} ( \overline{ E_{2} } )$ (as $*$-algebras).
\end{itemize}
\end{theor}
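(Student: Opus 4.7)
The first observation is that Theorem~\ref{t:moves} already establishes (a) $\Leftrightarrow$ (e) $\Leftrightarrow$ (f), as these correspond to items (d), (e), and (a) of that theorem. The plan is therefore to close the cycle by proving (a) $\Rightarrow$ (b) $\Rightarrow$ (c) $\Rightarrow$ (d) $\Rightarrow$ (e). The implications (a) $\Rightarrow$ (b) and (c) $\Rightarrow$ (d) are essentially functorial, while (b) $\Rightarrow$ (c) is trivial. For (a) $\Rightarrow$ (b), a graph isomorphism $\overline{{\tt t}E_1} \cong \overline{{\tt t}E_2}$ yields a $\C$-algebra isomorphism $L_\C(\overline{{\tt t}E_1}) \cong L_\C(\overline{{\tt t}E_2})$ from the universal property, and Corollary~\ref{transitiveClosure} provides $L_\C(\overline{E_i}) \cong L_\C(\overline{{\tt t}E_i})$ for $i=1,2$. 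For (c) $\Rightarrow$ (d), since each $\overline{E_i}$ is amplified and hence satisfies Condition~(K), Lemma~\ref{l:sqlocalunits} together with Lemma~\ref{l:excisionlocalunit} ensures that all subquotients of both Leavitt path algebras satisfy excision in algebraic $K$-theory, so any ring isomorphism induces an isomorphism of $\oikalg$ via Lemma~\ref{l:inducealgkthy}.

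The main step is (d) $\Rightarrow$ (e). Since the amplified graphs $\overline{E_i}$ are not row-finite, Theorem~\ref{t:induceiso} cannot be applied directly; the strategy is to transfer the problem to row-finite desingularizations using the full-corner arguments developed for Theorem~\ref{t:induceisounit}, but without the unit-preservation hypothesis. For each $i = 1, 2$, let $F_i$ be a desingularization of $\overline{E_i}$; then $F_i$ is row-finite and still satisfies Condition~(K). By Lemma~\ref{corner-lemma}, there exists an idempotent $p_i \in L_\C(F_i)$ with $L_\C(F_i) p_i L_\C(F_i) = L_\C(F_i)$, together with compatible isomorphisms $\phi_i : L_\C(\overline{E_i}) \to p_iL_\C(F_i)p_i$ and $\psi_i : C^*(\overline{E_i}) \to p_iC^*(F_i)p_i$.

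Proposition~\ref{p:algkthyfullcorner} then applies (its hypotheses hold because every ideal of $L_\C(F_i)$ and of $p_iL_\C(F_i)p_i \cong L_\C(\overline{E_i})$ has a countable approximate unit consisting of idempotents by Lemma~\ref{l:sqlocalunits}), so the inclusion $p_iL_\C(F_i)p_i \hookrightarrow L_\C(F_i)$ induces an isomorphism $\oikalg(p_iL_\C(F_i)p_i) \cong \oikalg(L_\C(F_i))$. Chaining these through $\phi_1$, $\phi_2$, and the hypothesis (d) produces $\oikalg(L_\C(F_1)) \cong \oikalg(L_\C(F_2))$. Since $F_1$ and $F_2$ are row-finite graphs satisfying Condition~(K), Theorem~\ref{t:induceiso} yields $\oiktop(C^*(F_1)) \cong \oiktop(C^*(F_2))$. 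Finally, Proposition~\ref{p:topkthyfullcorner}, applied to each separable $C^*$-algebra $C^*(F_i)$ with full corner $p_iC^*(F_i)p_i$, gives $\oiktop(p_iC^*(F_i)p_i) \cong \oiktop(C^*(F_i))$, and conjugating back through $\psi_1, \psi_2$ completes (e). The hard part is precisely this reconciliation between the non-row-finite amplified graphs and the row-finiteness requirement of Theorem~\ref{t:induceiso}, which is resolved by the desingularization plus full-corner passage.
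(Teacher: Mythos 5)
Your proposal is correct and follows essentially the same route as the paper: the chain (a) $\Rightarrow$ (b) $\Rightarrow$ (c) $\Rightarrow$ (d) $\Rightarrow$ (e) via Corollary~\ref{transitiveClosure}, then the desingularization-plus-full-corner passage (Lemma~\ref{corner-lemma}, Proposition~\ref{p:algkthyfullcorner}, Theorem~\ref{t:induceiso}, Proposition~\ref{p:topkthyfullcorner}) for (d) $\Rightarrow$ (e), and Theorem~\ref{t:moves} to close the cycle through (f). Your extra detail justifying (c) $\Rightarrow$ (d) (Condition~(K), excision via Lemmas~\ref{l:sqlocalunits} and~\ref{l:excisionlocalunit}, and Lemma~\ref{l:inducealgkthy}) is simply an expansion of what the paper calls ``clear.''
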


\begin{proof}
To see $(a) \Rightarrow (b)$, suppose $\overline{ {\tt t} E_{1} } \cong \overline{ {\tt t} E_{2} }$.  Then $L_{\C} ( \overline{{\tt t} E_{1}} ) \cong L_{\C} ( \overline{ {\tt t }E_{2}} )$ as $\C$-algebras.   By Corollary~\ref{transitiveClosure}, $L_{\C} ( \overline{E_{i}} ) \cong L_{\C} ( \overline{ {\tt t} E_{i} } )$ as $\C$-algebras. Hence, $L_{\C} ( \overline{E_{1}} ) \cong L_{\C} ( \overline{E_{2}} )$ as $\C$-algebras.  Thus $(b)$ holds. 

The implications $(b)  \Rightarrow (c)$ and $(c)  \Rightarrow (d)$ are clear.  To see $(d) \Rightarrow (e)$, suppose $\oikalg( L_{\C} ( \overline{E_{1}} )  ) \cong \oikalg ( L_{\C} ( \overline{E_{2}} ) )$.  By Lemma~\ref{corner-lemma}, Proposition~\ref{p:algkthyfullcorner}, and Proposition~\ref{p:topkthyfullcorner}, 
\begin{align*}
\oikalg( L_{\C} ( \overline{E_{i}} )  ) \cong \oikalg( L_{\C} ( F_{i})  ) \text{ and } \oiktop( C^{*} ( \overline{E_{i}} )  ) \cong \oiktop( C^{*} ( F_{i} )  ),
\end{align*}
where $F_{i}$ is a desingularization of $\overline{E_{i}}$, for each $i = 1,2$.  Therefore, 
\begin{align*}
\oikalg( L_{\C} ( F_{1})  ) \cong \oikalg( L_{\C} ( F_{2})  ).
\end{align*}
By Theorem~\ref{t:induceiso}, $\oiktop( C^{*} ( F_{1} )  ) \cong \oiktop( C^{*} (F_{2} )  )$.  Hence, 
\begin{align*}
\oiktop( C^{*} ( \overline{E_{1}} )  ) \cong \oiktop( C^{*} ( \overline{E_{2}} )  )
\end{align*}
and $(e)$ holds.   

Lastly, $(e) \Rightarrow (f)$ and $(f) \Rightarrow (a)$ follow from Theorem~\ref{t:moves}.
\end{proof}

\end{document}